\documentclass[11pt,times,letter]{article}

\usepackage{amsmath, amsfonts,amsthm}
\usepackage{dsfont}
\usepackage{bbm}
\usepackage{xcolor}

\usepackage[english]{babel}

\usepackage[normalem]{ulem}

\usepackage{latexsym,amssymb,graphicx}
\usepackage{verbatim}
\usepackage{mathrsfs}
\usepackage{epsfig}

\usepackage{xcolor}
\usepackage{tikz}
\usepackage{pgfplots}
\usepgfplotslibrary{fillbetween}

\usepackage{url}
\usepackage{bm}
\usepackage{natbib}

\usepackage[colorlinks,linkcolor=blue,citecolor=blue,anchorcolor=blue]{hyperref}

\usepackage{amssymb}

\def\esssup_#1{\underset{#1}{\mathrm{ess\,sup\, }}}
\def\essinf_#1{\underset{#1}{\mathrm{ess\,inf\, }}}
\def\argmax_#1{\underset{#1}{\mathrm{arg\,max\, }}}
\def\argmin_#1{\underset{#1}{\mathrm{arg\,min\, }}}

\newtheorem{theorem}{Theorem}[section]
\newtheorem{definition}{Definition}[section]
\numberwithin{equation}{section}

\newtheorem{proposition}[theorem]{Proposition}
\newtheorem{assumption}[theorem]{Assumption}
\newtheorem{remark}[theorem]{Remark}
\newtheorem{lemma}[theorem]{Lemma}
\newtheorem{corollary}[theorem]{Corollary}
\newtheorem{example}{Example}[section]
\allowdisplaybreaks[4]

\definecolor{Red}{rgb}{1.00, 0.00, 0.00}

\definecolor{DRed}{rgb}{0.5, 0.00, 0.00}

\definecolor{Blue}{rgb}{0.00, 0.00, 1.00}

\definecolor{Green}{rgb}{0.0, 0.4, 0.0}

\definecolor{mycolor1}{rgb}{0.00000,0.44700,0.74100}%
\definecolor{mycolor2}{rgb}{0.85000,0.32500,0.09800}%
\definecolor{mycolor3}{rgb}{0.92900,0.69400,0.12500}%
\definecolor{mycolor4}{rgb}{0.49400,0.18400,0.55600}%
\definecolor{mycolor5}{rgb}{0.46600,0.67400,0.18800}%
\definecolor{mycolor6}{rgb}{0.00000,1.00000,1.00000}%

\pgfplotsset{compat=newest}

\title{Convex ordering for graphon mean-field systems}

\author{Daorong Cui \thanks{Email: dcuiab@connect.ust.hk, Department of Mathematics, The Hong Kong University of Science and Technology, Clearwater Bay, Kowloon, Hong Kong.}
\and
Shuoqing Deng \thanks{Email: masdeng@ust.hk, Department of Mathematics, The Hong Kong University of Science and Technology, Clearwater Bay, Kowloon, Hong Kong. S. Deng is supported by the Hong Kong University of Science and Technology Start-up grant no. R9826 and Hong Kong RGC Early Career Scheme (ECS) under grant no. 26307125. }
\and
Yang Xiang\thanks{Email: maxiang@ust.hk, Department of Mathematics, The Hong Kong University of Science and Technology, Clearwater Bay, Kowloon, Hong Kong.}
}
\date{\vspace{-1cm}}

\begin{document}
\maketitle

\begin{abstract}

We establish marginal and functional convex order comparisons for graphon mean-field systems on $\mathbb{R}^d$, including the infinite-horizon setting. A key difficulty is that convex order cannot in general be transferred through the usual particle approximation, which requires us to work directly with Euler schemes for the graphon mean-field system. Under a suitable dissipativity condition, we establish Euler approximation estimates that are uniform in both time and the agent label, and combine them with forward-backward induction arguments to obtain the convex order comparisons. For the infinite-horizon problem, dissipativity provides the required uniform-in-time stability, while an exponentially weighted $L^2$-space enables trajectory-level convergence and leads to the functional convex order on $[0,\infty)$. As an application, we derive value-function comparisons for a class of one-dimensional linear-quadratic graphon mean-field games.

\vspace{0.6 cm}

\noindent{\textbf{Mathematics Subject Classification (2020)}: 60G65, 60H35, 60J60, 60K35 }

\noindent{\textbf{Keywords}: Convex order, Graphon mean-field systems, Euler schemes.}
\end{abstract}

\section{Introduction}\label{sec:intro}
Convex order between two distributions $\mu, \nu \in \mathcal{P}_1(\mathbb{R}^d)$, i.e. probability measures on $\mathbb{R}^d$ with finite first moment, is defined by
\begin{equation*}
    \mu \preceq_{cv} \nu \quad \text{if $\forall \varphi: \mathbb{R}^d \rightarrow \mathbb{R}$ convex, } \int_{\mathbb{R}^d} \varphi(x) \mu(\mathrm{d}x) \leq \int_{\mathbb{R}^d} \varphi(y) \nu(\mathrm{d}y).
\end{equation*}
By taking two specific linear functions $\varphi(x) = \pm x$, this naturally implies that both measures have the same mean. Two $\mathbb{R}^d$-valued random vectors $X$ and $Y$ are in convex order if their corresponding distributions are.

The study of convex ordering for stochastic processes dates back to the work of Hajek \cite{Hajek1985} on the case of increasing convex order, and has been explored by various authors since then, see, for instance \cite{Karoui1998, Martini1999, Schied2007,Bergenthum2007} and references therein. We also refer to \cite{LiuPages2023, Pages2014,Jourdain2025,Jourdain2023} for more recent articles. In \cite{Pages2014}, Pag\`{e}s established functional convex order results for two Brownian martingale diffusion processes under the assumption that diffusion coefficient is convex in the spatial variable. Such a convexity condition was further relaxed by  \cite{Jourdain2023} in the one-marginal case while an assumption comparable to the spatial convexity of the diffusion coefficient is needed for the two-marginal case. Liu and Pag\`{e}s \cite{LiuPages2023} extended such functional convex order results to the McKean-Vlasov equations, and Jourdain and Pag\`{e}s \cite{Jourdain2025} compared the convex order of stochastic Volterra equations.

In this paper, we will investigate in depth the comparison of marginal and functional convex orders for scaled graphon mean-field systems, where ``scaled'' means that the drift coefficient is affine in the space variable. Such a system is defined on some filtered probability space $(\Omega,\mathcal{F}, \mathbb{F}, \mathbb{P})$ by
\begin{equation*}
    \mathrm{d}X^u_t = b(t, X^u_t) \mathrm{d} t  + \sigma (t, X^u_t, [G \boldsymbol{\mu}_t]^u) \mathrm{d}B^u_t, \quad u \in I:=[0,1] \text{ and } t \geq 0,
\end{equation*}
where $b$ and $\sigma$ are suitable coefficient functions; $G(\cdot, \cdot)$ is the graphon function, defined as a symmetric measurable function from $I^2$ to $I$. The graphon-weighted measure $[G \boldsymbol{\mu}_t]^u$, defined as
$$
[G \boldsymbol{\mu}_t]^u(\mathrm{d}x) :=  \int_I G(u,v) \mu^v_t (\mathrm{d}x) \mathrm{d}v , \quad \text{$\mu^u_t := \mathcal{L}(X^u_t)$ and $\boldsymbol{\mu}_t := \{\mu^u_t\}_u$},
$$
represents the population's influence on player $u$'s state, and $\{B^u\}_u$ is a sequence of i.i.d. $q$-dimensional standard Brownian motions. As in the McKean-Vlasov dynamics, the coefficients of this system depend on the distributions of the state variables. However, we also remark that each $X^u$ alone is not a standard McKean-Vlasov dynamic, as its own law $\mu^u$ plays a negligible role in the evolution, see also \cite{BayrChak2023}. Apart from the heterogeneous interactions between agents, another novelty of our framework is the extension of the time horizon to infinity. To the best of the authors' knowledge, there is no functional convex order results established for such dynamics existing in the literature.

Building on the graphon theories developed by Lov\'{a}sz \cite{Lovasz2012}, the study on graphon mean-field systems has gained considerable attention recently. They play a crucial role in modeling heterogeneous network structures of many systems beyond the scope of classical mean-field models. It is well known that the graphon serves as a natural continuum limit of the graph sequence. Motivated by this, \cite{BayrWu2022,BayrKim2022,BayrChak2023,Pham2025,He2025} quantitatively studied the convergence of finite particle systems towards their graphon counterparts by establishing results such as propagation of chaos (POC) \cite{BayrWu2022,Pham2025}, law of large numbers (LLN) \cite{BayrWu2022,BayrChak2023,Pham2025,He2025}, uniform-in-time convergence \cite{BayrWu2022}, and concentration bounds on the Wasserstein distance \cite{BayrKim2022}. Moreover, Caines and Huang \cite{Caines2021} incorporated graphon into the mean-field game (MFG) framework, proved a well-posedness result for the graphon mean-field games (GMFGs), and in particular, provided an $\epsilon$-Nash result which relates the mean-field equilibrium to the equilibria of $N$ players on the graph sequence. We should also mention that an approximate Nash-equilibrium theory for GMFG was established in \cite{LackerSoret2023} under a label-state formulation, which defines different strengths of approximation under different regularities of the model.  
We refer to \cite{Aurell2022,Zhang2023,Tangpi2024} for other works on GMFG, and \cite{Crescenzo2026, Djete2025} for works on graphon mean-field type control (MFC) problems.

Our first contribution is to establish convex order preservation for graphon mean-field systems. This requires going beyond the existing results in two directions. On the one hand, the functional convex order results of \cite{LiuPages2023} concern McKean--Vlasov dynamics and do not account for heterogeneous graphon interactions. On the other hand, the uniform-in-time graphon approximation results of \cite{BayrWu2022} are established for finite particle systems with linear interactions in the drift, whereas convex order propagation in our setting requires graphon-dependent interactions in the diffusion coefficient.

A key difficulty is that the usual particle approximation cannot be used to transfer convex order to the graphon limit, since the particle system does not in general preserve convex order; see \cite[Remark 5.1]{LiuPages2023}. We therefore construct and analyze Euler schemes directly at the level of the graphon mean-field system. Under a suitable dissipativity condition, we obtain approximation estimates that are uniform in both time and the agent label. Combining these estimates with forward-backward induction arguments yields the marginal and functional convex order comparisons.

Our second contribution is to extend functional convex order to the infinite-horizon setting. This extension is not immediate from the finite-horizon theory, as the relevant moment and approximation estimates need to remain stable as the time horizon grows. The dissipativity condition provides the required uniform-in-time stability, while the exponentially weighted space $L^2_\mathcal{F} (0, \infty;\mathbb{R}^d)$, defined as the set of all $\{\mathcal{F}_t\}_{t \geq 0}$-progressively measurable processes $X := \{X_t\}_{t \geq 0}$ such that
\begin{equation*}
    \mathbb{E}\Big[\big\|X\big\|_K^2\Big] := \mathbb{E}\bigg[\int_0^\infty \mathrm{e}^{-Kt} \big|X_t\big|^2 \mathrm{d}t\bigg] < \infty,
\end{equation*}
 allows us to control entire trajectories on $[0,\infty)$. This leads to an infinite-horizon trajectory-level convergence result for the Euler schemes and, ultimately, to the functional convex order on the infinite horizon. See also  \cite{Zhang20232}.

As an application, we use the resulting convex order comparisons to compare value functions in one-dimensional linear-quadratic graphon mean-field games.



The rest of the paper is organized as follows. We conclude Section \ref{sec:intro} with notations used throughout the paper. In Section \ref{sec:formulation}, we introduce the graphon mean-field dynamics and the associated well-posedness and Euler schemes. In Section \ref{sec:main}, we provide all main results on convex ordering, namely the convex ordering of the Euler schemes and the functional convex ordering. Proofs related to well-posedness and Euler schemes are collected in Section \ref{A:sec:1}. Finally, in Section \ref{sec:application}, we apply our results to the comparison of the value functions of the graphon mean-field games.

\subsection{Notations}
1. Let $I := [0, 1]$. The set $I$ represents the labels of a continuum of agents. A graphon $G$ is a symmetric measurable function from $I \times I$ to $I$, associated with the cut-norm defined as:
$$
\|G\|_\square := \sup_{S,T \in \mathcal{B}(I)} \bigg|\int_{S \times T} G(u,v) \mathrm{d}u \mathrm{d}v\bigg|.
$$
2. We denote $\mathcal{P} (E)$, $\mathcal{P}_p (E)$, $\mathcal{M}(E)$, and $\mathcal{M}_p (E)$ respectively the collection of probability measures, probability measures with finite $p$-th moment, finite positive measures, and finite positive measures with finite $p$-th moment on $(E, |\cdot|_E)$. Also, we define the space $\mathcal{H}(E) \subset \mathcal{P}_2 (E)^I$ by
\begin{align*}
    \mathcal{H} (E) := \bigg\{ & \boldsymbol{\mu} \in \mathcal{P}_2 (E)^I \text{ s.t. } \forall B \in \mathcal{B}(E), \; \boldsymbol{\mu}(B) : u \mapsto \mu^u(B) \text{ is  measurable}  \\
    & \text{and s.t. } \sup_{u \in I} \int_E |x|_E^2 \mu^u(\mathrm{d}x) < \infty\bigg\}.
\end{align*}
3. The $p$-Wasserstein distance $\mathcal{W}_p$ on $\mathcal{P}_p (E)$ is defined for any $\mu,\nu \in \mathcal{P}_p(E)$:
$$
\mathcal{W}_p (\mu, \nu) := \inf_{\pi \in \Pi(\mu, \nu)} \left(\int_{E \times E} |x - y|_E^p \pi (\mathrm{d} x, \mathrm{d} y)\right)^{1/p},
$$
where $\Pi(\mu,\nu)$ denotes the set of probability measures in $\mathcal{P}(E \times E)$ with the first marginal $\mu$ and the second marginal $\nu$. 

Fix an arbitrary reference point $x_0 \in E$, the 2-Wasserstein On Positive measures ($\mathcal{WOP}_2$) metric, firstly introduced in \cite{Leblanc2023}, is defined for any $\mu, \nu \in \mathcal{M}_2 (E)$ by
\begin{align*}
    \mathcal{WOP}_2^2 (\mu, \nu) & = (m_\mu - m_\nu)^2 + \mathcal{W}_2^2 (T_{m_\mu} \# \overline{\mu}, T_{m_\nu} \# \overline{\nu}) \\
    & = (m_\mu - m_\nu)^2 + (m_\mu - m_\nu) (M_{x_0}(\mu) - M_{x_0}(\nu)) + m_\mu m_\nu \mathcal{W}_2^2(\overline{\mu}, \overline{\nu}),
\end{align*}
where $m_\mu$ denotes the total mass of measure $\mu$; $\overline{\mu} := \mu/m_\mu$ if $m_\mu > 0$ and $\delta_{x_0}$ otherwise, where $\delta_{x_0}$ is the Dirac measure at $x_0$; $M_{x_0}(\mu) := \int_{E} |x-x_0|_E^2 \mu(\mathrm{d}x)$; for $a >0$, we define $T_a(x) := a(x-x_0) + x_0$, and $T \# \mu$ represents the pushforward of a measure $\mu$ by a measurable mapping $T$. 

The space $\mathcal{H}(E)$ is a metric space itself when endowed with the metric defined for any $\boldsymbol{\mu}, \boldsymbol{\nu} \in \mathcal{P}_2(E)^I$ by
\begin{equation}
    \boldsymbol{d}_\mathcal{H} (\boldsymbol{\mu}, \boldsymbol{\nu}) := \sup_{u \in I} \mathcal{W}_2 (\mu^u, \nu^u) = \sup_{u \in I} \bigg(\inf_{\pi \in \Pi(\mu^u, \nu^u)} \int_{E \times E} |x-y|_E^2 \pi(\mathrm{d}x, \mathrm{d}y)\bigg)^{1/2}.
    \label{A:eqn:50}
\end{equation}
4. Let $\mathbb{M}^{d \times q} (\mathbb{R})$ denote the set of matrices with $d$ rows and $q$ columns equipped with the operator norm $\|\cdot\|$ defined by $\| A\| := \sup_{|z|_q \leq 1} |Az|_d$, for any $A \in \mathbb{M}^{d \times q}(\mathbb{R})$, where $|\cdot|_d$ denotes the canonical Euclidean norm on $\mathbb{R}^d$ generated by the canonical inner product $\langle \cdot|\cdot \rangle$.  Without ambiguity, we will write $|\cdot|$ instead of $|\cdot|_d$ for the Euclidean norm on $\mathbb{R}^d$ to simplify notation.

As (1.5) in \cite{LiuPages2023}, we define a partial order between two matrices in $\mathbb{M}^{d \times q} (\mathbb{R})$ as follows:
\begin{equation}
    \forall A, B \in \mathbb{M}^{d \times q}(\mathbb{R}), \quad A \preceq B \quad \text{if $BB^\top - A A^\top$ is a positive semi-definite (PSD) matrix},
    \label{A:eqn:7}
\end{equation}
where $A^\top$ stands for the transpose of $A$. \\
5. Given a graphon $G$ and a $\boldsymbol{\mu} \in \mathcal{H}(E)$, we define the map $I \ni u \mapsto [G \boldsymbol{\mu}]^u \in \mathcal{M}_2(E)$ by
\begin{equation*}
    [G \boldsymbol{\mu}]^u (\mathrm{d}x) := \int_I G(u,v) \mu^v(\mathrm{d}x) \mathrm{d}v.
\end{equation*}
Similar to (3.2) in \cite{Pham2025}, we have for any $\boldsymbol{\mu}, \boldsymbol{\nu} \in \mathcal{H}(E)$ that
\begin{equation}
    \sup_{u \in I} \mathcal{WOP}^2_2 ([G \boldsymbol{\mu}]^u, [G \boldsymbol{\nu}]^u) \leq \int_I \mathcal{W}_2^2 (\mu^u, \nu^u) \mathrm{d}u \leq \sup_{u \in I} \mathcal{W}^2_2(\mu^u, \nu^u).
    \label{A:eqn:22}
\end{equation}
6. Denote by $\mathcal{C}_\infty (E) := \mathbb{C} ([0, \infty), E)$ the space of continuous functions from $[0, \infty)$ to $E$, and similarly, denote by $\mathcal{C}_\infty (\mathcal{H}(E)) := \mathbb{C}([0, \infty), \mathcal{H} (E))$ the space of continuous functions from $[0, \infty)$ to $\mathcal{H}(E)$. In the case of $E =\mathbb{R}^d$, we simply write $\mathcal{C}^d_\infty$ instead of $\mathcal{C}_\infty (\mathbb{R}^d)$. \\ 
7. Let $L^2(0, \infty;\mathbb{R}^d)$ the space of measurable functions $x :=\{x_t\}_{t \geq 0}$ equipped with the weighted $L^2$-norm $\|\cdot\|_K$, which is defined for some $K \in (0, \infty) := \mathbb{R}^*_+$ as:
\begin{equation}
    \|x\|^2_K := \int_0^\infty \mathrm{e}^{-Kt} |x_t|^2 \mathrm{d}t.
    \label{A:eqn:49}
\end{equation}
8. Let $L^{2,c}(0, \infty; \mathcal{H}(\mathbb{R}^d))$ the space of  continuous functions $\boldsymbol{\mu} := \{\boldsymbol{\mu}_t\}_{t \geq 0}$ s.t.
$
\sup_{t} \boldsymbol{d}_\mathcal{H} (\boldsymbol{\mu}_t, \boldsymbol{\delta}_0) < \infty,
$
where $\boldsymbol{\delta}_0 := \{\delta^u_0\}_{u \in I}$ and $\delta^u_0 = \delta_0$, $\forall u \in I$. Moreover, we equip $ L^{2,c}(0, \infty; \mathcal{H}(\mathbb{R}^d))$ with  the  metric $\boldsymbol{d}_\mathcal{C}$ defined as:
\begin{equation}
    \boldsymbol{d}_\mathcal{C} (\boldsymbol{\mu}, \boldsymbol{\nu}) := \sup_{t\geq 0} \boldsymbol{d}_\mathcal{H}(\boldsymbol{\mu}_t, \boldsymbol{\nu}_t).
    \label{A:eqn:12}
\end{equation}
9. Let $\mathbb{C}_{cv} (E_1, E_2) := \{\varphi: E_1 \rightarrow E_2 \text{ convex function}\}$. \\
10. For any $m_1, m_2 \in \mathbb{N}$ with $m_1 \leq m_2$, we denote by $x_{m_1 : m_2} := (x_{m_1}, \cdots, x_{m_2})$, and similarly, $\boldsymbol{\mu}_{m_1:m_2} := (\boldsymbol{\mu}_{m_1}, \cdots, \boldsymbol{\mu}_{m_2})$. Moreover, we denote by $x_{0:\infty} := (x_0, x_1, x_2, \cdots)$, and $\boldsymbol{\mu}_{0:\infty} := (\boldsymbol{\mu}_{0}, \boldsymbol{\mu}_{1}, \boldsymbol{\mu}_{2}, \cdots)$. 

\section{Well-posedness and Euler schemes for graphon mean-field systems} \label{sec:formulation}
\subsection{Assumptions and well-posedness}
Given two graphons $G$ and $K$, on some filtered probability space $(\Omega, \mathcal{F}, \mathbb{F}, \mathbb{P})$, consider for any $u \in I$ the following two graphon mean-field systems:
\begin{align}
    & X^u_t = X^u_0 + \int_0^t b(s, X^u_s) \mathrm{d} s  + \int_0^t \sigma (s, X^u_s, [G \boldsymbol{\mu}_s]^u) \mathrm{d}B^u_s, \quad t \geq 0, 
    \label{A:eqn:1} \\
    & Y^u_t = Y^u_0 + \int_0^t b(s, Y^u_s) \mathrm{d} s  + \int_0^t \theta (s, Y^u_s, [K \boldsymbol{\nu}_s]^u)\mathrm{d}B^u_s, \quad t \geq 0,
    \label{A:eqn:2}
\end{align}
where we denote $\mu^u_t := \mathcal{L}(X^u_t) \in \mathcal{P}_2(\mathbb{R}^d)$ and $\boldsymbol{\mu}_t := \{\mu^u_t\}_{u \in I} \in \mathcal{P}_2(\mathbb{R}^d)^I$, and similarly, $\nu^u_t := \mathcal{L} (Y^u_t) \in \mathcal{P}_2(\mathbb{R}^d)$ and $\boldsymbol{\nu}_t := \{\nu^u_t\}_{u \in I} \in \mathcal{P}_2 (\mathbb{R}^d)^I$; $b \in \mathbb{R}^d$ and $\sigma, \theta \in \mathbb{M}^{d \times q} (\mathbb{R})$ are suitable functions; $\{B^u: u \in I\}$ are i.i.d. $q$-dimensional Brownian motions, and $\{X^u_0, Y^u_0, B^u: u \in I \}$ are mutually independent.

In the following, we first impose the following regularity and stability conditions on the coefficients in Assumption \ref{A:assump:1}. Then we introduce the structural conditions needed for the convex ordering in Assumption \ref{A:assump:2}.

\begin{assumption} 
    \textup{(i)} The functions $b, \; \sigma$ and $\theta$ are $\rho$-H\"older continuous in $t$, $\rho \in (0, 1]$, and Lipschitz continuous in $x$ and in $\mu$ in the following sense: there exists a positive constant $\tilde{L}$ such that $\forall x \in \mathbb{R}^d, \; \forall \mu \in \mathcal{M}_2(\mathbb{R}^d)$
    \begin{align*}
        & |b(t,x) - b(s,x)| \vee \|\sigma(t,x,\mu) - \sigma(s,x,\mu)\| \vee \|\theta(t,x,\mu) - \theta(s,x,\mu)\| \\
        & \qquad \leq \tilde{L} \big(1 + |x| + \mathcal{WOP}_2(\mu, \overline{\mu} \cdot \delta_{0})\big) (t-s)^\rho;
    \end{align*}
    for every fixed $t \in \mathbb{R}_+$, there exists $L > 0$ such that $\forall x,y \in \mathbb{R}^d, \; \forall \mu, \nu \in \mathcal{M}_2 (\mathbb{R}^d)$
    \begin{align*}
        |b(t,x) - b(t,y)| \vee \|\sigma(t,x,\mu) - \sigma(t,y,\nu)\| \vee \|\theta(t,x,\mu) - \theta(t,y,\nu)\| \leq L\big(|x-y|+\mathcal{WOP}_2(\mu, \nu)\big).
    \end{align*}
    \textup{(ii)} The map $I \ni u \mapsto \mu^u_0 \in \mathcal{P}(\mathbb{R}^d)$ is measurable, and $\sup_{u \in I} \mathbb{E} [|X^u_0|^2] < \infty$ $(\textit{idem for }\{\nu^u_0\}_{u \in I})$.
    
    \vspace{1mm}
    
    \noindent \textup{(iii)} Dissipativity: There exists some $c_0 \in (0,\infty)$ such that 
    \begin{equation}
        (x_1 - x_2) \cdot(b(t,x_1) - b(t, x_2)) \leq -c_0 |x_1-x_2|^2, \quad \forall t \in \mathbb{R}_+, \; \forall x_1, x_2 \in \mathbb{R}^d,
        \label{A:eqn:20}
    \end{equation}
    and
    \begin{equation}
        \kappa_1 := c_0 - 8L^2 q > 0.
        \label{A:eqn:26}
    \end{equation}
    \textup{(iv)} In addition, suppose that 
    \begin{equation}
        \kappa_2 := \sup_{t \geq 0}\big|b(t,0)\big| \vee \sup_{t \geq 0} \sup_{u \in I} \big\|\sigma(t,0,[G \boldsymbol{\delta}_0]^u)\big\| \vee \sup_{t \geq 0} \sup_{u \in I} \big\|\theta(t,0,[K \boldsymbol{\delta}_0]^u)\big\| < \infty,
        \label{A:eqn:59}
    \end{equation}
    where $\boldsymbol{\delta}_0 := \{\delta^u_0\}_{u \in I}$ and $\delta^u_0 = \delta_0$, $\forall u \in I$.
    \label{A:assump:1}
\end{assumption}

\begin{remark}
Dissipativity conditions similar to Assumption 2.1(iii) have been used in  \cite{BayrWu2022} to obtain uniform-in-time estimates for graphon particle systems. In our setting, this condition likewise plays a key role in deriving uniform-in-time moment and Euler approximation estimates, which are essential for the infinite-horizon analysis.

\end{remark}

\begin{assumption} \label{assum:2.2}
    \textup{(i)} The function $b$ is affine in $x$. 

    \vspace{2mm}

    \noindent \textup{(ii)} For every fixed $t \in \mathbb{R}_+$ and $\mu \in \mathcal{M}_2(\mathbb{R}^d)$, the function $x \mapsto \sigma(t,x,\mu)$ is convex in the sense that
    \begin{equation*}
        \forall x, y \in \mathbb{R}^d, \; \forall \lambda \in [0, 1], \quad \sigma (t,\lambda x+(1-\lambda)y, \mu) \preceq \lambda \sigma(t,x,\mu) + (1-\lambda) \sigma(t,y, \mu).
    \end{equation*}
    \textup{(iii)} For every fixed $(t,u, x) \in \mathbb{R}_+ \times I \times \mathbb{R}^d$, the function $\boldsymbol{\mu} \mapsto \sigma (t,x,[G \boldsymbol{\mu}]^u)$ is nondecreasing with respect to the convex order, namely,
    \begin{equation*}
        \forall \boldsymbol{\mu}, \boldsymbol{\nu} \in \mathcal{H}(\mathbb{R}^d) \text{ satisfying } \mu^u \preceq_{cv} \nu^u, \; \forall u \in I \; \Longrightarrow \; \sigma (t,x,[G \boldsymbol{\mu}]^u) \preceq \sigma (t,x,[G \boldsymbol{\nu}]^u).
    \end{equation*}
    \textup{(iv)} For every $(t,u, x,\boldsymbol{\mu}) \in \mathbb{R}_+ \times I \times \mathbb{R}^d \times \mathcal{H}(\mathbb{R}^d)$, we have
    \begin{equation*}
        \sigma (t,x,[G\boldsymbol{\mu}]^u) \preceq \theta (t,x,[K\boldsymbol{\mu}]^u).
    \end{equation*}
    \textup{(v)} $X^u_0 \preceq_{cv} Y^u_0, \; \forall u \in I$.
    \label{A:assump:2}
\end{assumption}

\begin{example}
    Graphon mean-field systems \eqref{A:eqn:1}-\eqref{A:eqn:2} are nonlinear with respect to the measures, which is a more general setting than the scalar framework adopted in \cite{BayrWu2022, BayrChak2023, Zhang2023}. Under the linear interaction assumption with $d=q=1$, i.e.
    \begin{align*}
    & \mathrm{d}X^u_t = b(t,X^u_t)\mathrm{d}t + \int_I \int_\mathbb{R} G(u,v) \sigma (t, X^u_t,z)\mu^v_t(\mathrm{d}z)\mathrm{d}v\mathrm{d}B^u_t, \quad \text{with } \mu^u_t = \mathcal{L}(X^u_t), \\
    & \mathrm{d}Y^u_t = b(t,Y^u_t)\mathrm{d}t + \int_I \int_\mathbb{R} K(u,v) \theta (t, Y^u_t,z)\nu^v_t(\mathrm{d}z)\mathrm{d}v\mathrm{d}B^u_t, \quad \text{with } \nu^u_t = \mathcal{L}(Y^u_t),
    \end{align*}
    Assumption \ref{A:assump:2} (ii)-(iv) can be implied by the following conditions:

    \vspace{1mm}
    
    \noindent $\bullet$ \quad for every $t \in \mathbb{R}_+$, the function $\mathbb{R} \times \mathbb{R} \ni (x,z) \mapsto \sigma (t, x, z) \in \mathbb{R}_+$ is convex;

    \vspace{1mm}
    
    \noindent $\bullet$ \quad for every $(u,t,x) \in I \times \mathbb{R}_+ \times \mathbb{R}$, we have for any $(v_1, v_2, z_1, z_2) \in I^2 \times \mathbb{R}^2$,
    \begin{equation*}
        G(u,v_1) G(u,v_2) \sigma (t,x,z_1) \sigma(t,x,z_2) \leq K(u,v_1) K(u,v_2) \theta (t,x,z_1) \theta (t,x,z_2).
    \end{equation*}
    
    To see this, Assumption \ref{A:assump:2} (ii) follows directly from the first point, and (iii) is satisfied due to the non-negativity as well as the convexity of $\sigma$, i.e.
    \begin{equation*}
        0 \leq \int_I G(u,v) \bigg(\int_\mathbb{R} \sigma (t,X^u_t,z) \mu^v_t(\mathrm{d}z)\bigg)\mathrm{d}v \leq \int_I G(u,v) \bigg(\int_\mathbb{R} \sigma (t,X^u_t,z) \nu^v_t(\mathrm{d}z)\bigg)\mathrm{d}v,
    \end{equation*}
    which verifies (iii) by noticing that
    \begin{equation*}
        \bigg[\int_I G(u,v) \bigg(\int_\mathbb{R} \sigma (t,X^u_t,z) \mu^v_t(\mathrm{d}z)\bigg)\mathrm{d}v\bigg]^2 \leq \bigg[\int_I G(u,v) \bigg(\int_\mathbb{R} \sigma (t,X^u_t,z) \nu^v_t(\mathrm{d}z)\bigg)\mathrm{d}v\bigg]^2.
    \end{equation*}
    Based on the second point, a straightforward calculation shows that Assumption \ref{A:assump:2} (iv) holds.
    \label{A:example:1}
\end{example}

The following lemma gives well-posedness of systems \eqref{A:eqn:1} and \eqref{A:eqn:2}, which extends the existing results, see Proposition 2.1 in \cite{BayrChak2023} or Proposition 3.1 in \cite{Pham2025}, from finite-horizon to infinite-horizon.

 Given $\boldsymbol{\mu} \in L^{2,c}(0,\infty; \mathcal{H}(\mathbb{R}^d))$, define $\Gamma (\boldsymbol{\mu}) := \{\{\mathcal{L}(X^{u, \boldsymbol{\mu}}_t)\}_{u \in I}\}_{t \geq 0}$ with $X^{u, \boldsymbol{\mu}}_t$ the unique solution to
\begin{equation*}
    \mathrm{d}X^{u, \boldsymbol{\mu}}_t = b(t, X^{u, \boldsymbol{\mu}}_t) \mathrm{d} t  + \sigma (t, X^{u, \boldsymbol{\mu}}_t, [G \boldsymbol{\mu}_t]^u) \mathrm{d}B^u_t, \quad X^{u, \boldsymbol{\mu}}_0 = X^u_0.
\end{equation*}
Similarly, define $\Xi (\boldsymbol{\mu}) := \{\{\mathcal{L}(Y^{u, \boldsymbol{\mu}}_t)\}_{u \in I}\}_{t \geq 0}$ with $Y^{u, \boldsymbol{\mu}}_t$ the unique solution to
\begin{equation*}
    \mathrm{d}Y^{u, \boldsymbol{\mu}}_t = b(t, Y^{u, \boldsymbol{\mu}}_t) \mathrm{d} t  + \theta (t, Y^{u, \boldsymbol{\mu}}_t, [K \boldsymbol{\mu}_t]^u) \mathrm{d}B^u_t, \quad Y^{u, \boldsymbol{\mu}}_0 = Y^u_0.
\end{equation*}
Using the Lipschitz properties of $b, \sigma$ and $\theta$ as well as the dissipativity condition on $b$, one deduces by similar arguments as in \cite[Proposition 2.1]{BayrChak2023} and \cite[Section 6]{BayrWu2022} that $\Gamma (\boldsymbol{\mu}), \; \Xi(\boldsymbol{\mu}) \in L^{2,c} (0,\infty;\mathcal{H}(\mathbb{R}^d))$. The space $\left(L^{2,c} (0,\infty;\mathcal{H}(\mathbb{R}^d)),  \boldsymbol{d}_\mathcal{C} \right)$ is complete, see Remark \ref{rmk:complete}. Hence, the following Lemma \ref{A:lemma:8} together with the Banach fixed-point theorem yields the well-posedness of \eqref{A:eqn:1}.

\begin{lemma}
    Under Assumption \ref{A:assump:1}, $\Gamma$ is a contraction under the metric $\boldsymbol{d}_\mathcal{C}$ $(\text{idem for $\Xi$})$.
    \label{A:lemma:8}
\end{lemma}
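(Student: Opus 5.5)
We prove that $\Gamma$ is a contraction on $L^{2,c}(0,\infty;\mathcal{H}(\mathbb{R}^d))$ by a synchronous coupling: solutions driven by two input flows share the same initial condition and Brownian motion. The argument for $\Xi$ is identical with $(\theta,K)$ replacing $(\sigma,G)$.

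Fix $\boldsymbol{\mu},\boldsymbol{\nu}\in L^{2,c}(0,\infty;\mathcal{H}(\mathbb{R}^d))$ and $u\in I$. Let $X^{u,\boldsymbol{\mu}}$ and $X^{u,\boldsymbol{\nu}}$ be the corresponding solutions with the same $X^u_0$ and the same $B^u$, and set $\Delta_t:=X^{u,\boldsymbol{\mu}}_t-X^{u,\boldsymbol{\nu}}_t$, so that $\Delta_0=0$. For a rate $\beta>0$ to be fixed below, apply It\^o's formula to $\mathrm{e}^{\beta t}|\Delta_t|^2$:
\begin{equation*}
\mathrm{d}\big(\mathrm{e}^{\beta t}|\Delta_t|^2\big)=\mathrm{e}^{\beta t}\Big(\beta|\Delta_t|^2+2\Delta_t\cdot\big(b(t,X^{u,\boldsymbol{\mu}}_t)-b(t,X^{u,\boldsymbol{\nu}}_t)\big)+\|\sigma^{\boldsymbol{\mu}}_t-\sigma^{\boldsymbol{\nu}}_t\|_F^2\Big)\mathrm{d}t+\mathrm{d}M_t .
\end{equation*}
Here $\sigma^{\boldsymbol{\mu}}_t:=\sigma(t,X^{u,\boldsymbol{\mu}}_t,[G\boldsymbol{\mu}_t]^u)$, $\|\cdot\|_F$ is the Frobenius norm, and $M$ is a local martingale. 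The a priori second-moment bounds, which follow from dissipativity and Assumption~\ref{A:assump:1}(iv) as in the text preceding the lemma, make $M$ a true martingale after localisation.

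The drift term is controlled by dissipativity \eqref{A:eqn:20}, which gives the bound $-2c_0|\Delta_t|^2$. For the diffusion term, use $\|A\|_F^2\le q\|A\|^2$ together with the Lipschitz condition in Assumption~\ref{A:assump:1}(i) and \eqref{A:eqn:22}. This yields
\begin{equation*}
\|\sigma^{\boldsymbol{\mu}}_t-\sigma^{\boldsymbol{\nu}}_t\|_F^2\le 2qL^2|\Delta_t|^2+2qL^2\,\mathcal{WOP}_2^2\big([G\boldsymbol{\mu}_t]^u,[G\boldsymbol{\nu}_t]^u\big)\le 2qL^2|\Delta_t|^2+2qL^2\,\boldsymbol{d}_\mathcal{H}^2(\boldsymbol{\mu}_t,\boldsymbol{\nu}_t).
\end{equation*}
Taking expectations, we obtain
\begin{equation*}
\mathrm{e}^{\beta t}\mathbb{E}|\Delta_t|^2\le\int_0^t\mathrm{e}^{\beta s}\big(\beta-2c_0+2qL^2\big)\mathbb{E}|\Delta_s|^2\,\mathrm{d}s+2qL^2\,\boldsymbol{d}_\mathcal{C}^2(\boldsymbol{\mu},\boldsymbol{\nu})\int_0^t\mathrm{e}^{\beta s}\,\mathrm{d}s .
\end{equation*}
Choose $\beta:=2c_0-2qL^2$. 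This is positive, indeed $\beta\ge 2\kappa_1>0$ by \eqref{A:eqn:26}. With this choice the first integral vanishes, and
\begin{equation*}
\mathbb{E}|\Delta_t|^2\le\frac{2qL^2}{\beta}\,\boldsymbol{d}_\mathcal{C}^2(\boldsymbol{\mu},\boldsymbol{\nu}).
\end{equation*}
This bound is uniform in $t\ge0$ and $u\in I$.

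Since $\mathcal{W}_2^2(\mathcal{L}(X^{u,\boldsymbol{\mu}}_t),\mathcal{L}(X^{u,\boldsymbol{\nu}}_t))\le\mathbb{E}|\Delta_t|^2$, taking the supremum over $u$ and then over $t$ gives
\begin{equation*}
\boldsymbol{d}_\mathcal{C}\big(\Gamma(\boldsymbol{\mu}),\Gamma(\boldsymbol{\nu})\big)^2\le\frac{qL^2}{c_0-qL^2}\,\boldsymbol{d}_\mathcal{C}^2(\boldsymbol{\mu},\boldsymbol{\nu}).
\end{equation*}
The constant satisfies $\frac{qL^2}{c_0-qL^2}<1$ exactly when $c_0>2qL^2$. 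This is implied by the stronger condition $c_0>8qL^2$ in \eqref{A:eqn:26}, which leaves room for cruder constants; for instance, the factor $2$ from $(a+b)^2\le2a^2+2b^2$ could be replaced by a Young inequality with a weight, at no cost. Hence $\Gamma$ is a contraction.

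The step requiring the most care is the uniform-in-time bound. It holds only because the dissipativity in \eqref{A:eqn:20} absorbs the $|\Delta_t|^2$ contribution of the diffusion, and the exponential weight $\mathrm{e}^{\beta t}$ turns the input discrepancy into a time-independent constant. A Gr\"onwall argument would not work here, since it would produce a factor growing in $t$. The remaining routine points are:
\begin{itemize}
\item Measurability of $u\mapsto\mathcal{L}(X^{u,\boldsymbol{\mu}}_t)$ follows from the measurability of $u\mapsto\mu^u_0$ and the fact that the coefficients are measurable in $u$ through $G$.
\item Continuity of $\Gamma(\boldsymbol{\mu})$ in $t$ follows from standard $L^2$ increment estimates of the SDE, using the uniform moment bounds.
\end{itemize}
Both were asserted in the text preceding the lemma, and we take them as given.
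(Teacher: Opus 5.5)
Your proof is correct and follows the paper's argument: synchronous coupling, It\^o's formula, dissipativity \eqref{A:eqn:20} for the drift, the Lipschitz bound with \eqref{A:eqn:22} for the diffusion, and a uniform-in-time estimate giving contraction constant $\frac{qL^2}{c_0-qL^2}<\frac{1}{7}$ for $\boldsymbol{d}_{\mathcal{C}}^2$. The only difference is cosmetic: you put the integrating factor $\mathrm{e}^{\beta t}$ directly into It\^o's formula, whereas the paper applies the comparison Lemma~\ref{A:lemma:6} with $a_2=0$ (and keeps $d\wedge q$ where you use $q$), which yields the same bound.
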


\begin{remark} \label{rmk:complete}

The space $\left(L^{2,c} (0,\infty;\mathcal{H}(\mathbb{R}^d)),  \boldsymbol{d}_\mathcal{C}\right)$ is complete due to the fact that $(\mathcal{H}(\mathbb{R}^d), \boldsymbol{d}_\mathcal{H})$ is a complete metric space. To see the latter, if $\{\boldsymbol{\mu}^n\}_n$ is a Cauchy sequence in $\mathcal{H}(\mathbb{R}^d)$, then for any fixed $u \in I$, $\{\mu^{u,n}\}_n$ is also a Cauchy sequence in the complete metric space $(\mathcal{P}_2(\mathbb{R}^d), \mathcal{W}_2)$, and so it contains a limit $\mu^u$. The measurability of the map $u \rightarrow \mu^u$ comes from Theorem 4.2.2 in \cite{Dudley2018}. This in turn determines a limit $\boldsymbol{\mu}$ in $\mathcal{H}(\mathbb{R}^d)$.

\end{remark}

\subsection{Euler schemes}
In this section, we analyze the Euler schemes of systems \eqref{A:eqn:1}-\eqref{A:eqn:2} with step size $h > 0$. For $m \in \mathbb{N}$, we define $t^h_m := m \cdot h$. Without ambiguity, we will write $t_m$ instead of $t_m^h$. Let $Z^u_{m+1} := \frac{1}{\sqrt{h}} (B^u_{t_{m+1}} - B^u_{t_m})$ be i.i.d. random variables with probability distribution $\mathcal{N}(0, \mathbf{I}_q)$. The Euler schemes of equations \eqref{A:eqn:1} and \eqref{A:eqn:2} are defined by
\begin{align}
    & \bar{X}^{u,h}_{t_{m+1}} = \bar{X}^{u,h}_{t_m} + h \cdot b(t_m, \bar{X}^{u,h}_{t_m}) + \sqrt{h} \cdot \sigma(t_m, \bar{X}^{u,h}_{t_m}, [G \bar{\boldsymbol{\mu}}^h_{t_m}]^u) Z^u_{m+1}, \quad \bar{X}^{u,h}_0 = X^u_0, \label{A:eqn:3} \\
    & \bar{Y}^{u,h}_{t_{m+1}} = \bar{Y}^{u,h}_{t_m} + h \cdot b(t_m, \bar{Y}^{u,h}_{t_m}) + \sqrt{h} \cdot \theta(t_m, \bar{Y}^{u,h}_{t_m}, [K \bar{\boldsymbol{\nu}}^h_{t_m}]^u) Z^u_{m+1}, \quad \bar{Y}^{u,h}_0 = Y^u_0, \label{A:eqn:4}
\end{align}
where $\bar{\mu}^{u,h}_{t_m} := \mathcal{L}(\bar{X}^{u,h}_{t_m})$ and $\bar{\nu}^{u,h}_{t_m} := \mathcal{L}(\bar{Y}^{u,h}_{t_m})$; $\bar{\boldsymbol{\mu}}^h_{t_m}$ and $\bar{\boldsymbol{\nu}}^h_{t_m}$ are respectively the measure ensembles of $\{\bar{\mu}^{u,h}_{t_m}\}_{u \in I}$ and $\{\bar{\nu}^{u,h}_{t_m}\}_{u \in I}$. We extend the Euler schemes to continuous time as follows (denoting $\bar{X}^{u,h}$, $\bar{Y}^{u,h}$ the corresponding processes): for every $t \in [t_m, t_{m+1})$,
\begin{align}
    & \bar{X}^{u,h}_t = \bar{X}^{u,h}_{t_m} + b(t_m, \bar{X}^{u,h}_{t_m}) (t - t_m) + \sigma(t_m, \bar{X}^{u,h}_{t_m}, [G \bar{\boldsymbol{\mu}}^h_{t_m}]^u)(B^u_t - B^u_{t_m}), \label{A:eqn:30} \\
    & \bar{Y}^{u,h}_{t} = \bar{Y}^{u,h}_{t_m} + b(t_m, \bar{Y}^{u,h}_{t_m}) (t-t_m) + \theta(t_m, \bar{Y}^{u,h}_{t_m}, [K \bar{\boldsymbol{\nu}}^h_{t_m}]^u)(B^u_t - B^u_{t_m}). \label{A:eqn:31}
\end{align} 

The following Proposition \ref{A:prop:1} provides uniform-in-time estimates and weighted \(L^2\)-convergence for the Euler schemes, which will be crucial for the infinite-horizon functional convex order. It is valid for both processes $X$ and $Y$, and its proof is postponed to Section \ref{A:sec:1}.

\begin{proposition}
    Assume that Assumption \ref{A:assump:1} is in force. 
    \begin{itemize}
        \item[\textup{(i)}] There exist $h_0, C > 0$ such that
        \begin{equation*}
            \sup_{u \in I}\mathbb{E}\Big[\big\|X^u\big\|_K^2\Big] \vee \sup_{h \in (0, h_0)} \sup_{u \in I}\mathbb{E}\Big[\big\|\bar{X}^{u,h}\big\|_K^2\Big] \leq C.
        \end{equation*}
        Moreover, there exists a constant $\kappa$ such that for any $h \in (0,h_0)$,
        \begin{equation*}
            \sup_{t \geq 0} \sup_{u \in I} \mathbb{E}\Big[\big|\bar{X}^{u,h}_t - \bar{X}^{u,h}_{[t]^h}\big|^2\Big] \leq \kappa (t-[t]^h) \leq \kappa h,
        \end{equation*}
        where we used the notation $[t]^h := \lfloor \frac{t}{h}\rfloor \cdot h$.

        \item[\textup{(ii)}] Recall $h_0 > 0$ defined in \textup{(i)}. Then, there exists $\gamma >0$ such that for any $h \in (0, h_0)$,
        \begin{equation*}
            \sup_{u \in I} \mathbb{E} \Big[\big\|X^u - \bar{X}^{u,h}\big\|_K^2\Big] \leq \gamma h^{1 \wedge 2\rho}.
        \end{equation*}
    \end{itemize}
    \label{A:prop:1}
\end{proposition}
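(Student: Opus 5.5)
We prove the three estimates in order, each reducing to a scalar differential or recursive inequality closed by dissipativity. The constant $\kappa_1 = c_0 - 8L^2q>0$ is designed to absorb the diffusion's Lipschitz contribution, which combines the $x$-part and the measure part via \eqref{A:eqn:22}. Throughout, $K>0$ is the weight in $\|\cdot\|_K$. Since $\int_0^\infty e^{-Kt}\,dt = 1/K$, every weighted $L^2$ bound follows from the corresponding uniform-in-time pointwise bound $\sup_{t\ge0}\sup_u \mathbb{E}|\cdot|^2$ by Fubini. So the real targets are uniform-in-time second-moment and error bounds.

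\textbf{Step 1: moments of $X^u$.} Apply It\^o to $|X^u_t|^2$ and write $b(t,x)\cdot x \le -c_0|x|^2 + \kappa_2|x|$. Then bound
\[
\|\sigma(t,x,[G\boldsymbol\mu_t]^u)\|^2 \le (1+\epsilon)\,2L^2\bigl(|x|^2+\mathcal{WOP}_2^2([G\boldsymbol\mu_t]^u,[G\boldsymbol\delta_0]^u)\bigr) + C_\epsilon\kappa_2^2 .
\]
The trace gives a factor $q$. By \eqref{A:eqn:22}, the $\mathcal{WOP}_2^2$ term is at most $\sup_v \mathbb{E}|X^v_t|^2$. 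Setting $m(t) := \sup_u \mathbb{E}|X^u_t|^2$, we get $m' \le -2(c_0 - cL^2q)m + C$ in integrated form. Since $cL^2q \le 8L^2q < c_0$, Gr\"onwall gives $\sup_t m(t)<\infty$. Taking the supremum over $u$ inside a derivative is delicate, so we work with the integrated inequality for $e^{\lambda t}\mathbb{E}|X^u_t|^2$, which is uniform in $u$.

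\textbf{Step 2: the Euler scheme.} We expand the one-step recursion
\[
\mathbb{E}|\bar X_{t_{m+1}}|^2 = \mathbb{E}|\bar X_{t_m}+hb|^2 + h\,\mathbb{E}\|\sigma\|_F^2 ,
\]
using independence of $Z^u_{m+1}$ from $\mathcal F_{t_m}$. By dissipativity and the linear growth of $b$, $|x+hb(t,x)|^2 \le (1-2c_0h + C h^2)|x|^2 + Ch$. The diffusion term gives $h\cdot 2(1+\epsilon)L^2q\,(|x|^2 + \bar m_m)$. Hence
\[
\bar m_{m+1} \le \bigl(1-2\kappa_1' h + Ch^2\bigr)\bar m_m + Ch ,
\]
and for $h<h_0$ small this yields a uniform bound. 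Between grid points, the continuous extension adds $C(t-t_m)$, uniformly. The increment estimate $\mathbb{E}|\bar X_t-\bar X_{[t]}|^2 \le 2(t-t_m)^2|b|^2 + 2(t-t_m)q\|\sigma\|^2 \le \kappa(t-[t]^h)$ then follows from linear growth and the uniform moments.

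\textbf{Step 3 (ii), the main obstacle.} Set $e^u_t := X^u_t - \bar X^{u,h}_t$. Then
\[
de = \bigl(b(t,X)-b(\underline t,\bar X_{\underline t})\bigr)dt + \bigl(\sigma(t,X,[G\boldsymbol\mu_t]^u)-\sigma(\underline t,\bar X_{\underline t},[G\bar{\boldsymbol\mu}_{\underline t}]^u)\bigr)dB ,
\]
where $\underline t := [t]^h$. We split the drift difference as
\[
\bigl(b(t,X)-b(t,\bar X_t)\bigr) + \bigl(b(t,\bar X_t)-b(t,\bar X_{\underline t})\bigr) + \bigl(b(t,\bar X_{\underline t})-b(\underline t,\bar X_{\underline t})\bigr).
\]
The first piece gives $-c_0|e|^2$. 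The second, via Young's inequality, gives $\epsilon|e|^2 + C_\epsilon\kappa h$. The third gives $\epsilon|e|^2 + C h^{2\rho}$. The diffusion splits the same way, and its Lipschitz part is $\le 2(1+\epsilon)L^2q\,(|e|^2 + \sup_v\mathbb{E}|e^v|^2)$, using $\mathcal W_2(\mu^v_t,\bar\mu^v_t)^2 \le \mathbb{E}|e^v_t|^2$ and \eqref{A:eqn:22}. The remaining terms are $O(h + h^{2\rho})$ by the estimates in (i) and H\"older continuity in $t$. We therefore obtain
\[
\frac{d}{dt}\,\mathbb{E}|e^u_t|^2 \le -2\kappa_1''\sup_v \mathbb{E}|e^v_t|^2 + C h^{1\wedge 2\rho},\qquad e_0=0 ,
\]
in integrated exponential form, with $\kappa_1''>0$ after choosing $\epsilon$ small; this is where $\kappa_1>0$ is needed. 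The conclusion is $\sup_{t,u}\mathbb{E}|e^u_t|^2 \le C h^{1\wedge2\rho}/\kappa_1''$, and integrating against $e^{-Kt}$ gives (ii).

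The delicate points are twofold. First, the comparison argument must handle the supremum over $u$; we handle it with the integrated exponential form, as in Step 1. Second, the constant bookkeeping must stay within the budget $8L^2q < c_0$ while absorbing the $\epsilon$-terms from the time-discretization pieces.
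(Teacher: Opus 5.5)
Your proof is correct. It has the same skeleton as the paper's: uniform-in-time pointwise bounds as in Lemma~\ref{A:lemma:9}, obtained from It\^o's formula or the one-step recursion plus dissipativity, the same three-way splitting of the error, and then integration against $\mathrm{e}^{-Kt}$. Where you differ is in how you close the coupling across labels.

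\textbf{The paper's closure.} It bounds the measure term by the label average $\int_I\mathbb{E}|X^v_s|^2\,\mathrm{d}v$, which is exactly what \eqref{A:eqn:22} delivers. It integrates the per-label inequality over $u$ to get a closed scalar inequality for the average. It then applies the comparison Lemma~\ref{A:lemma:6}, which handles a $\sqrt{y}$ term directly rather than removing it by Young's inequality. Finally it substitutes the bound on the average back into the per-label inequality. This never puts a supremum over $u$ inside a derivative, but it uses measurability of $u\mapsto\mathbb{E}|X^u_t|^2$ and that special lemma.

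\textbf{Your closure.} You bound the measure term by $\sup_v$, remove the linear term with Young's inequality, and close by variation of constants, then take the supremum over $u$ and absorb. This avoids both ingredients above. The absorption, however, needs $\sup_{s\le T}\sup_v(\cdot)<\infty$ a priori on each $[0,T]$, and you should state this. For $X$ it follows from standard finite-horizon estimates, or from the fixed point lying in $L^{2,c}$; for the error it follows from part (i). Your $(1+\epsilon)$-weighted splittings are also sharper than the paper's constants: they only need roughly $c_0>2L^2q$ rather than the full budget $c_0>8L^2q$.

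\textbf{Two displays need correcting.}
\begin{itemize}
\item In Step 2, $2h\kappa_2|x|$ cannot be bounded by $Ch^2|x|^2+Ch$. The one-step bound should read $(1-2c_0h+\epsilon h+Ch^2)|x|^2+C_\epsilon h$, with the $\epsilon h$ absorbed into $\kappa_1'$.
\item In Step 3 you do not obtain $-2\kappa_1''\sup_v\mathbb{E}|e^v_t|^2$ on the right-hand side. What you actually get is $-a\,\mathbb{E}|e^u_t|^2+b\sup_v\mathbb{E}|e^v_t|^2+Ch^{1\wedge 2\rho}$ with $a>b>0$. The integrated form of this, followed by the supremum over $u$ and absorption, gives $\sup_{t,u}\mathbb{E}|e^u_t|^2\le Ch^{1\wedge2\rho}/(a-b)$.
\end{itemize}
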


Notice from Lemma \ref{A:lemma:9} (i) that 
$
\{\bar{\mu}^{u,h}_t\}_{t \geq 0} := \{\mathcal{L}(\bar{X}^{u,h}_t)\}_{t \geq 0} \in \mathbb{C}([0,\infty), \mathcal{P}_2 (\mathbb{R}^d)).
$ In addition, we have the following property, which justifies that the graphon-weighted measure, i.e. $[G \bar{\boldsymbol{\mu}}^h_{t_m}]^u$ or $[K \bar{\boldsymbol{\nu}}^h_{t_m}]^u$, is well-defined.
\begin{lemma}
    The map $I \ni u \mapsto \{\bar{\mu}^{u,h}_t\}_{t \geq 0} :=  \{\mathcal{L}(\bar{X}^{u,h}_t)\}_{t \geq 0} \in \mathbb{C}([0,\infty), \mathcal{P}_2(\mathbb{R}^d))$ is measurable $($idem for $\{\bar{\nu}^{u,h}_t\}_{t \geq 0}$$)$.
    \label{A:lemma:7}
\end{lemma}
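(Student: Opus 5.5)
We argue by induction on the time grid, using a representation of the Euler scheme as a deterministic measurable function of the initial condition, the label-dependent coefficients and the Gaussian increments. The basic principle is the following. If $F:I\times\mathbb{R}^d\times(\mathbb{R}^q)^m\to\mathbb{R}^d$ is jointly measurable, and $u\mapsto\mu^u_0=\mathcal{L}(X^u_0)$ is measurable (Assumption \ref{A:assump:1}(ii)), then $u\mapsto\mathcal{L}(F(u,X^u_0,Z^u_{1:m}))$ is measurable. Indeed, since $X^u_0$ is independent of $Z^u_{1:m}\sim\mathcal{N}(0,\mathbf{I}_{qm})$, for any bounded measurable $f$,
\begin{equation*}
\mathbb{E}\big[f(F(u,X^u_0,Z^u_{1:m}))\big]=\int_{\mathbb{R}^d}\int_{(\mathbb{R}^q)^m} f(F(u,x,z))\,\gamma_{qm}(\mathrm{d}z)\,\mu^u_0(\mathrm{d}x),
\end{equation*}
where $\gamma_{qm}$ is the standard Gaussian measure. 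The right-hand side is measurable in $u$: the inner integral is jointly measurable in $(u,x)$ by Fubini, and integrating a jointly measurable bounded function against a measurable kernel $u\mapsto\mu^u_0$ is measurable by the standard monotone class argument. Taking $f=\mathbf{1}_B$ yields measurability of $u\mapsto\mathcal{L}(\cdot)(B)$ for every Borel set $B$, which is the notion used in $\mathcal{H}(\mathbb{R}^d)$.

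\textbf{Discrete times.} We prove by induction on $m$ two statements. The first is that $\bar X^{u,h}_{t_m}=F_m(u,X^u_0,Z^u_{1:m})$ with $F_m$ jointly measurable. The second is that $u\mapsto\bar\mu^{u,h}_{t_m}$ is measurable, with $\sup_u\int|x|^2\bar\mu^{u,h}_{t_m}(\mathrm{d}x)<\infty$. The moment bound follows by a routine induction from Lipschitz growth, using \eqref{A:eqn:22} and $\kappa_2$. Given the second statement at step $m$, the map $u\mapsto[G\bar{\boldsymbol{\mu}}^h_{t_m}]^u(B)=\int_I G(u,v)\bar\mu^{v,h}_{t_m}(B)\mathrm{d}v$ is measurable by Fubini, so $u\mapsto[G\bar{\boldsymbol{\mu}}^h_{t_m}]^u$ is a measurable map into $\mathcal{M}_2(\mathbb{R}^d)$. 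Here we use the Borel $\sigma$-field generated by the evaluations $\mu\mapsto\mu(B)$, which coincides with the Borel $\sigma$-field of the $\mathcal{WOP}_2$ topology on this Polish-type space (the same identification as in \cite[Theorem 4.2.2]{Dudley2018}). Since $\sigma$ is continuous in $(x,\mu)$ by Assumption \ref{A:assump:1}(i), the map $(u,x)\mapsto\sigma(t_m,x,[G\bar{\boldsymbol{\mu}}^h_{t_m}]^u)$ is jointly measurable. Hence
\begin{equation*}
F_{m+1}(u,x,z_{1:m+1})=F_m+h\,b(t_m,F_m)+\sqrt h\,\sigma\big(t_m,F_m,[G\bar{\boldsymbol{\mu}}^h_{t_m}]^u\big)z_{m+1}
\end{equation*}
is jointly measurable, which gives the first statement at step $m+1$. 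The second statement at step $m+1$ then follows from the principle above.

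\textbf{Continuous time.} For $t\in[t_m,t_{m+1})$, \eqref{A:eqn:30} shows that $\bar X^{u,h}_t$ is a jointly measurable function of $(u,X^u_0,Z^u_{1:m},B^u_t-B^u_{t_m})$, the last argument being independent Gaussian. So $u\mapsto\bar\mu^{u,h}_t$ is measurable for each fixed $t$. To upgrade this to measurability into $\mathbb{C}([0,\infty),\mathcal{P}_2(\mathbb{R}^d))$, we use the fact that this space, with the topology of locally uniform convergence, is Polish. Its Borel $\sigma$-field is therefore generated by the evaluation maps $\boldsymbol{\mu}\mapsto\mu_t$ for $t$ in a countable dense set, for instance the rationals, since continuous paths are determined by them. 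Consequently, measurability of each coordinate map $u\mapsto\bar\mu^{u,h}_t$ suffices. That the path is continuous follows from Lemma \ref{A:lemma:9}(i), or directly from the increment bound in Proposition \ref{A:prop:1}(i).

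\textbf{Main obstacle.} The delicate point is the identification of $\sigma$-fields: on $\mathcal{P}_2$ and $\mathcal{M}_2$, the one generated by the set evaluations $\mu\mapsto\mu(B)$ must coincide with the Borel $\sigma$-field of the $\mathcal{W}_2$ and $\mathcal{WOP}_2$ metrics. This is what allows us to compose with the continuous map $\sigma$, and also to pass to the path space. We would handle it by noting the following. Both $\sigma$-fields are generated by the maps $\mu\mapsto\int f\,\mathrm{d}\mu$ for $f$ continuous with quadratic growth. The topologies are separable and metrizable, so their Borel sets are generated by balls, and the $\mathcal{W}_2$-distance to a fixed measure is measurable with respect to the evaluation $\sigma$-field by Kantorovich duality with a countable family of test functions.
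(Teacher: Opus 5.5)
Your proposal is correct and follows essentially the same route as the paper. Both arguments run a forward induction on the grid using a jointly measurable (Carath\'eodory) representation of each Euler step, and then reduce measurability into $\mathbb{C}([0,\infty),\mathcal{P}_2(\mathbb{R}^d))$ to fixed-time measurability. The differences are in bookkeeping only: the paper carries $\mathcal{L}(\bar{X}^{u,h}_{t_m},B^u)$ through the induction rather than unrolling to $(X^u_0,Z^u_{1:m})$, and it cites \cite[Lemma A.1]{Zhang2023} for the path-space step that you prove via rational-time evaluations.

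Your explicit check that $u\mapsto[G\bar{\boldsymbol{\mu}}^h_{t_m}]^u$ is measurable into $\mathcal{M}_2(\mathbb{R}^d)$, together with the identification of the evaluation and metric Borel $\sigma$-fields, supplies a step the paper leaves implicit. The paper uses this step without proof when it asserts that $H_k(\cdot,x,b_1,b_2)$ is measurable.
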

The proof of Lemma \ref{A:lemma:7} is also postponed to Section \ref{A:sec:1}.

\section{Main results} \label{sec:main}
\subsection{Convex order results for Euler schemes}
In this section, we will establish the convex ordering results for the random variables $\bar{X}^{u,h}_{t_m}$ and $\bar{Y}^{u,h}_{t_m}$, $m \in \mathbb{N}$, defined by the Euler schemes \eqref{A:eqn:3} and \eqref{A:eqn:4}. In order to simplify the notation, we rewrite \eqref{A:eqn:3} and \eqref{A:eqn:4} by setting
$$
\bar{X}^u_m := \bar{X}^{u,h}_{t_m}, \qquad \bar{Y}^u_m := \bar{Y}^{u,h}_{t_m}, \qquad \bar{\mu}^u_m := \bar{\mu}^{u,h}_{t_m}, \qquad \bar{\nu}^u_m := \bar{\nu}^{u,h}_{t_m}.
$$
Then, we have
\begin{align}
    & \bar{X}^u_{m+1} = b_m (\bar{X}^u_{m}) + \sigma^u_m (\bar{X}^u_m, \bar{\boldsymbol{\mu}}_m) Z^u_{m+1}, \quad \bar{X}^u_0 = X^u_0, \quad \text{where } \bar{\boldsymbol{\mu}}_m := \{\bar{\mu}^u_m\}_{u \in I}, \label{A:eqn:5}\\
    & \bar{Y}^u_{m+1} = b_m (\bar{Y}^u_{m}) + \theta^u_m (\bar{Y}^u_m, \bar{\boldsymbol{\nu}}_m) Z^u_{m+1}, \quad \bar{Y}^u_0 = Y^u_0, \quad \text{where } \bar{\boldsymbol{\nu}}_m := \{\bar{\nu}^u_m\}_{u \in I}, \label{A:eqn:6}
\end{align}
where for every $m \in \mathbb{N}$, we define:
\begin{align}
    & b_m (x) := x + h \cdot b(t_m, x), \qquad \sigma^u_m (x,\boldsymbol{{\mu}}) := \sqrt{h} \cdot \sigma (t_m, x, [G \boldsymbol{\mu}]^u), \label{A:eqn:64}\\
    & \theta^u_m (x,\boldsymbol{{\mu}}) := \sqrt{h} \cdot \theta (t_m, x, [K \boldsymbol{\mu}]^u). \label{A:eqn:65}
\end{align}
First, we notice that it follows from Lemma \ref{A:lemma:9} (i) and Lemma \ref{A:lemma:7} that $\bar{\boldsymbol{\mu}}_m \in \mathcal{H}(\mathbb{R}^d)$ for any $m =0, 1,2, \cdots$. Then, by Assumption \ref{A:assump:2}, $\bar{X}^u_0, \; \bar{Y}^u_0, \; b_m, \; \sigma^u_m \; \text{and } \theta^u_m$, for any $m \in \mathbb{N}$ and $u \in I$, satisfy the discrete-time counterpart.
\begin{assumption}
    \textup{(i)} The function $b_m$ is affine in $x$. 

    \vspace{2mm}

    \noindent \textup{(ii)} The function $\sigma^u_m$ is convex in $x$ in the sense that
    $$
    \forall x, y \in \mathbb{R}^d, \; \forall \lambda \in [0, 1], \quad \sigma^u_m (\lambda x + (1-\lambda)y, \boldsymbol{\mu}) \preceq \lambda \sigma^u_m (x, \boldsymbol{\mu}) + (1-\lambda) \sigma^u_m(y, \boldsymbol{\mu}).
    $$
    \textup{(iii)} The function $\sigma^u_m$ is nondecreasing in $\boldsymbol{\mu}$ with respect to the convex order:
    $$
    \forall \boldsymbol{\mu}, \boldsymbol{\nu} \in \mathcal{H}(\mathbb{R}^d) \text{ satisfying } \mu^u \preceq_{cv} \nu^u, \; \forall u \in I \; \Longrightarrow \; \sigma^u_m(x, \boldsymbol{\mu}) \preceq \sigma^u_m(x, \boldsymbol{\nu}).
    $$
    \textup{(iv)} We have the following order between $\sigma^u_m$ and $\theta^u_m$:
    $$
    \forall (x, \boldsymbol{\mu}) \in \mathbb{R}^d \times \mathcal{H}(\mathbb{R}^d), \quad \sigma^u_m (x, \boldsymbol{\mu}) \preceq \theta^u_m(x, \boldsymbol{\mu}).
    $$
    \textup{(v)} $\bar{X}^u_0 \preceq_{cv} \bar{Y}^u_0, \; \forall u \in I$.
    \label{A:assump:3}
\end{assumption} 

We recall that convex order can be characterized using convex functions with linear growth; see \cite[Lemma 4.1]{LiuPages2023} and \cite[Lemma A.1]{Alfonsi2019}.

\begin{lemma}
    Let $\mu, \nu \in \mathcal{P}_1 (\mathbb{R}^d)$. Then, we have $\mu \preceq_{cv} \nu$ if and only if
    $$
    \forall \varphi: \mathbb{R}^d \rightarrow \mathbb{R} \text{ convex and such that } \sup_{x \in \mathbb{R}^d} \frac{|\varphi(x)|}{1 + |x|} < \infty, \; \int_{\mathbb{R}^d} \varphi(x) \mu(\mathrm{d}x) \leq \int_{\mathbb{R}^d} \varphi(y) \nu(\mathrm{d}y).
    $$
    \label{A:lemma:2}
\end{lemma}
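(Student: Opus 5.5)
The statement is Lemma \ref{A:lemma:2}. The direction ``$\Rightarrow$'' is immediate: a convex function of linear growth is admissible in the definition of $\preceq_{cv}$, and it is integrable against $\mu$ and $\nu$ because both lie in $\mathcal{P}_1(\mathbb{R}^d)$. The real content is ``$\Leftarrow$''. Given an arbitrary convex $\varphi:\mathbb{R}^d\to\mathbb{R}$, we must show $\int\varphi\,d\mu\le\int\varphi\,d\nu$. Here $\int \varphi\, d\nu$ may be $+\infty$, and the negative part is always integrable: $\varphi$ is bounded below by an affine function, so $\varphi^-\le C(1+|x|)$. The plan is to approximate $\varphi$ from below by an increasing sequence of convex functions with linear growth, and then apply monotone convergence.

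\emph{Construction of the approximation.} Since $\varphi$ is convex and finite on $\mathbb{R}^d$, it is the supremum of its affine minorants. More concretely, take a countable dense set $\{x_k\}\subset\mathbb{R}^d$ and subgradients $p_k\in\partial\varphi(x_k)$, and set
\[
\varphi_n(x):=\max_{1\le k\le n}\big(\varphi(x_k)+\langle p_k, x-x_k\rangle\big).
\]
Each $\varphi_n$ is convex. It is a maximum of finitely many affine functions, so $|\varphi_n(x)|\le C_n(1+|x|)$ and it has linear growth. The sequence is nondecreasing in $n$ and satisfies $\varphi_n\le\varphi$.

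\emph{Pointwise convergence.} I will show $\varphi_n\uparrow\varphi$ pointwise. Subgradients of a convex function are locally bounded. Hence, for fixed $x$, choose $x_k\to x$ along a subsequence. Then $\varphi(x_k)\to\varphi(x)$ by continuity, and the $p_k$ stay bounded, so $\langle p_k,x-x_k\rangle\to0$. This gives $\sup_n\varphi_n(x)\ge\varphi(x)$, which together with $\varphi_n\le\varphi$ yields the claim.

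\emph{Passing to the limit.} By hypothesis $\int\varphi_n\,d\mu\le\int\varphi_n\,d\nu$ for every $n$. The sequence is bounded below by $\varphi_1$, which is affine and hence integrable under both measures since $\mu,\nu\in\mathcal{P}_1$. Monotone convergence applied to $\varphi_n-\varphi_1\ge0$ therefore gives $\int\varphi_n\,d\mu\to\int\varphi\,d\mu$, and likewise for $\nu$, with values in $(-\infty,+\infty]$. The inequality passes to the limit, which proves ``$\Leftarrow$''.

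The main obstacle is the pointwise convergence step, specifically justifying local boundedness of the subgradients. I would handle it with the standard bound: $\sup\{|p|: p\in\partial\varphi(y),\ |y-x|\le1\}$ is at most the oscillation of $\varphi$ on the ball $B(x,2)$, which is finite because a finite convex function is continuous. The remaining steps are routine.
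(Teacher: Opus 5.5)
Your proof is correct. The supporting-hyperplane minorants $\varphi_n$ are convex with linear growth, and they increase pointwise to $\varphi$ by continuity of $\varphi$ and local boundedness of $\partial\varphi$. Monotone convergence applied to $\varphi_n-\varphi_1\ge 0$, with $\varphi_1$ affine and hence integrable under $\mathcal{P}_1$, then carries the inequality to the limit in $(-\infty,+\infty]$.

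The paper gives no proof of its own and only cites \cite[Lemma 4.1]{LiuPages2023} and \cite[Lemma A.1]{Alfonsi2019}. Your argument (monotone approximation from below by linear-growth convex minorants, then monotone convergence) is a complete, self-contained proof of the cited fact, so there is nothing to add.
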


\subsubsection{ One-marginal case}
For every $m \in \mathbb{N}$ and $u \in I$, we define an operator $Q^u_{m+1}: \mathbb{C}_{cv}(\mathbb{R}^d, \mathbb{R}) \rightarrow \mathbb{C} (\mathbb{R}^d \times \mathbb{M}^{d\times q}(\mathbb{R}), \mathbb{R})$ associated with $Z^u_{m+1}$ by 
\begin{align}
    \mathbb{R}^d \times \mathbb{M}^{d \times q}(\mathbb{R}) \ni (x, e) \longmapsto & \big(Q^u_{m+1} \varphi \big)(x, e) := \mathbb{E} \Big[\varphi\big(b_m(x) + e Z^u_{m+1}\big)\Big] \in \mathbb{R}.
    \label{A:eqn:56}
\end{align}
For every $m \in \mathbb{N}^*$, let $\mathcal{F}_m$ denote the $\sigma$-algebra generated by $\{X^u_0, Y^u_0, Z^u_1, \cdots, Z^u_m\}_{u \in I}$. We can then obtain the following one-marginal result.

\begin{proposition}Let $\{\bar{X}^u_m\}$ and $\{\bar{Y}^u_m\}$ be random variables defined by \eqref{A:eqn:5} and \eqref{A:eqn:6}. Under Assumption \ref{A:assump:3}, we have
$$
\bar{X}^u_m \preceq_{cv} \bar{Y}^u_m, \quad \forall m \in \mathbb{N}, \; \forall u \in I.
$$
\label{A:prop:2} 
\end{proposition}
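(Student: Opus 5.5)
We argue by induction on $m$, for all $u \in I$ simultaneously, since the diffusion coefficient at step $m$ depends on the whole family $\bar{\boldsymbol{\mu}}_m$. The base case $m=0$ is Assumption \ref{A:assump:3}(v). Assume $\bar{X}^u_m \preceq_{cv} \bar{Y}^u_m$ for every $u \in I$, i.e. $\bar{\mu}^u_m \preceq_{cv} \bar{\nu}^u_m$ for all $u$. By Lemma \ref{A:lemma:2} it suffices to test against convex $\varphi$ with linear growth; all integrability is then guaranteed by the square-integrability of the Euler iterates (Lemma \ref{A:lemma:9}(i)) and the linear growth of the coefficients.

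The first step is to establish two properties of the operator $Q^u_{m+1}$ defined in \eqref{A:eqn:56}, for convex $\varphi$ of linear growth.

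\textbf{(a) Monotonicity in $e$.} If $e \preceq e'$, then $(Q^u_{m+1}\varphi)(x,e) \le (Q^u_{m+1}\varphi)(x,e')$. The reason is that $eZ \sim \mathcal{N}(0,ee^\top)$ and $e'Z \sim \mathcal{N}(0,e'e'^\top)$, with $e'e'^\top = ee^\top + D$ for some PSD matrix $D$. Hence $e'Z$ has the same law as $eZ + W$, where $W \sim \mathcal{N}(0,D)$ is independent of $Z$. Jensen's inequality conditionally on $Z$ then gives the claim, since $W$ is centered.

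\textbf{(b) Joint convexity in $x$ for a convex coefficient.} For fixed $\boldsymbol{\mu}$, the map $x \mapsto (Q^u_{m+1}\varphi)(x,\sigma^u_m(x,\boldsymbol{\mu}))$ is convex. First, $(x,e) \mapsto \mathbb{E}[\varphi(b_m(x)+eZ)]$ is jointly convex, because $b_m$ is affine and $(x,e) \mapsto b_m(x)+ez$ is affine for each fixed $z$. Take $x_\lambda = \lambda x + (1-\lambda)y$. By (a) and Assumption \ref{A:assump:3}(ii),
\[
Q\varphi\bigl(x_\lambda,\sigma(x_\lambda)\bigr) \le Q\varphi\bigl(x_\lambda,\lambda\sigma(x)+(1-\lambda)\sigma(y)\bigr),
\]
and joint convexity bounds the right-hand side by $\lambda\, Q\varphi(x,\sigma(x)) + (1-\lambda)\, Q\varphi(y,\sigma(y))$. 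Linear growth of $x \mapsto Q\varphi(x,\sigma(x))$ follows from the linear growth of $\varphi$, $b_m$ and $\sigma$.

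For the induction step, fix $u$ and a convex $\varphi$ of linear growth. Since $Z^u_{m+1}$ is independent of $\mathcal{F}_m$,
\[
\mathbb{E}\,\varphi(\bar{X}^u_{m+1}) = \mathbb{E}\,\big(Q^u_{m+1}\varphi\big)\big(\bar{X}^u_m,\sigma^u_m(\bar{X}^u_m,\bar{\boldsymbol{\mu}}_m)\big).
\]
We then chain four comparisons:
\begin{enumerate}
\item By (b) with $\boldsymbol{\mu} = \bar{\boldsymbol{\mu}}_m$ together with the induction hypothesis, replace $\bar{X}^u_m$ by $\bar{Y}^u_m$; this gives the bound $\mathbb{E}\, Q\varphi\bigl(\bar{Y}^u_m,\sigma^u_m(\bar{Y}^u_m,\bar{\boldsymbol{\mu}}_m)\bigr)$.
\item By Assumption \ref{A:assump:3}(iii), applicable because $\bar{\mu}^v_m \preceq_{cv} \bar{\nu}^v_m$ for all $v$, we have $\sigma^u_m(y,\bar{\boldsymbol{\mu}}_m) \preceq \sigma^u_m(y,\bar{\boldsymbol{\nu}}_m)$ pointwise in $y$. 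Applying (a), the bound increases to $\mathbb{E}\, Q\varphi\bigl(\bar{Y}^u_m,\sigma^u_m(\bar{Y}^u_m,\bar{\boldsymbol{\nu}}_m)\bigr)$.
\item By Assumption \ref{A:assump:3}(iv) and (a) again, replace $\sigma^u_m$ by $\theta^u_m$.
\item The resulting quantity equals $\mathbb{E}\,\varphi(\bar{Y}^u_{m+1})$.
\end{enumerate}
Here $\bar{\boldsymbol{\mu}}_m, \bar{\boldsymbol{\nu}}_m \in \mathcal{H}(\mathbb{R}^d)$ by Lemma \ref{A:lemma:7}, so the graphon quantities are well defined.

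The main obstacle is step (b): transferring the matrix-order convexity of Assumption \ref{A:assump:3}(ii) into genuine convexity of the composed function. This is resolved by combining the Gaussian decomposition in (a) with the joint convexity of $Q\varphi$ in $(x,e)$. The Gaussian decomposition itself requires care with the square root of $D$ when $q<d$ or the matrices are rank-deficient. In that case one realizes $W$ on an enlarged probability space; this is harmless because only laws matter.
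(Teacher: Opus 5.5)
Your proof is correct and takes essentially the same route as the paper. You use a forward induction in $m$, for all $u$ simultaneously, and chain four ingredients: monotonicity of $Q^u_{m+1}\varphi$ in the matrix argument, convexity of $x \mapsto (Q^u_{m+1}\varphi)(x,\sigma^u_m(x,\boldsymbol{\mu}))$, Assumption \ref{A:assump:3}(iii), and Assumption \ref{A:assump:3}(iv). The differences are cosmetic: you replace $\bar{X}^u_m$ by $\bar{Y}^u_m$ before replacing $\bar{\boldsymbol{\mu}}_m$ by $\bar{\boldsymbol{\nu}}_m$, whereas the paper does the reverse (equally valid), and you supply the standard proofs of Lemma \ref{A:lemma:1}(iii)--(iv), which the paper omits by citing Liu--Pag\`es.
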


Proposition \ref{A:prop:2} relies on the following lemma whose proof is similar to \cite[Proofs of Lemmas 4.2 and 4.3]{LiuPages2023}, hence is omitted.
\begin{lemma}
    Let $\varphi \in \mathbb{C}_{cv}(\mathbb{R}^d, \mathbb{R})$ with linear growth. Then, for every $m \in \mathbb{N}^*$ and $u \in I$:
    
    \textup{(i)} the function $(x, e) \mapsto (Q^u_m \varphi) (x,e)$ is (finite and) convex; 

    \vspace{1mm}

    \textup{(ii)} for any fixed $x \in \mathbb{R}^d$, the function $e \mapsto (Q^u_m \varphi)(x, e)$ attains its minimum at $\mathbf{0}^{d \times q}$, where $\mathbf{0}^{d \times q}$ is the zero-matrix of size $d \times q$; 

    \vspace{1mm}
    
    \textup{(iii)} for any fixed $x \in \mathbb{R}^d$, the function $e \mapsto (Q^u_m \varphi)(x,e)$ is nondecreasing with respect to the partial order of $d \times q$ matrix, defined in \eqref{A:eqn:7};

    \vspace{1mm}

    \textup{(iv)} for any $\boldsymbol{\mu} \in \mathcal{H}(\mathbb{R}^d)$, the function $x \mapsto \mathbb{E}\big[\varphi\big(b_m(x)+\sigma^u_m(x, \boldsymbol{\mu}) Z^u_{m+1}\big)\big]$ is convex with linear growth.
    \label{A:lemma:1}
\end{lemma}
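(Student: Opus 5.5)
We need to prove Lemma A:lemma:1 (i)-(iv). The plan follows the Liu--Pagès argument, adapted to our notation.

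\emph{Finiteness and convexity (i).} Since $\varphi$ has linear growth, $|\varphi(y)|\le C(1+|y|)$. Because $b_m$ is affine and $Z^u_m$ is Gaussian with finite first moment, $\mathbb{E}[|\varphi(b_{m-1}(x)+eZ^u_m)|]\le C(1+|b_{m-1}(x)|+\|e\|\,\mathbb{E}|Z^u_m|)<\infty$. The map $(x,e)\mapsto b_{m-1}(x)+ez$ is jointly affine for each fixed $z$. Composing the convex $\varphi$ with an affine map gives a convex function, and taking expectations preserves convexity. Hence $Q^u_m\varphi$ is convex in $(x,e)$.

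\emph{Minimum at $\mathbf 0$ (ii).} Fix $x$ and $e$. Since $\mathbb{E}[Z^u_m]=0$, Jensen's inequality gives
\[
(Q^u_m\varphi)(x,e)=\mathbb{E}\big[\varphi\big(b_{m-1}(x)+eZ^u_m\big)\big]\ge \varphi\big(b_{m-1}(x)\big)=(Q^u_m\varphi)(x,\mathbf 0).
\]

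\emph{Monotonicity in the partial order (iii).} This is the key step. Note first that $(Q^u_m\varphi)(x,e)$ depends on $e$ only through $ee^\top$, because $eZ\sim\mathcal N(0,ee^\top)$. Let $A\preceq B$, so that $BB^\top-AA^\top=:S$ is PSD, and set $D:=S^{1/2}$, a $d\times d$ matrix. Let $Z'\sim\mathcal N(0,\mathbf I_d)$ be independent of $Z^u_m$. Then $AZ^u_m+DZ'\sim\mathcal N(0,BB^\top)$, which has the same law as $BZ^u_m$. Conditioning on $Z^u_m$ and applying Jensen's inequality in $Z'$ (which has mean zero) yields
\[
\mathbb{E}\big[\varphi(b_{m-1}(x)+AZ^u_m)\big]\le \mathbb{E}\big[\varphi(b_{m-1}(x)+AZ^u_m+DZ')\big]=(Q^u_m\varphi)(x,B).
\]
The left-hand side is $(Q^u_m\varphi)(x,A)$, so $e\mapsto(Q^u_m\varphi)(x,e)$ is nondecreasing. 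The same argument shows that $\Phi(x,e):=(Q^u_m\varphi)(x,e)$ depends only on $ee^\top$, which is what part (iv) uses.

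\emph{Convexity along the diffusion coefficient (iv).} Write $g(x)=\Phi(x,\sigma^u_m(x,\boldsymbol\mu))$ with $\Phi:=Q^u_{m+1}\varphi$. Linear growth follows from $|g(x)|\le C(1+|b_m(x)|+\|\sigma^u_m(x,\boldsymbol\mu)\|)$ together with the Lipschitz property of $\sigma$ from Assumption \ref{A:assump:1}. For convexity, take $x,y$ and $\lambda\in[0,1]$, and set $z=\lambda x+(1-\lambda)y$. Assumption \ref{A:assump:3}(ii) gives $\sigma^u_m(z,\boldsymbol\mu)\preceq \lambda\sigma^u_m(x,\boldsymbol\mu)+(1-\lambda)\sigma^u_m(y,\boldsymbol\mu)$. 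Applying (iii) and then the joint convexity (i),
\[
g(z)\le \Phi\big(z,\lambda\sigma^u_m(x,\boldsymbol\mu)+(1-\lambda)\sigma^u_m(y,\boldsymbol\mu)\big)\le \lambda g(x)+(1-\lambda)g(y).
\]
In the first inequality, (iii) is applied at the fixed point $z$; this is legitimate because $\preceq$ compares only the Gram matrices, and $\Phi(z,\cdot)$ depends only on $ee^\top$.

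The main obstacle is (iii): the coupling must handle the fact that $A$ and $B$ need not be related by a simple matrix factorization. The square-root Gaussian decomposition above resolves this.
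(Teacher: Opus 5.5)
Your proof is correct and follows the argument the paper relies on. The paper omits the proof and cites Liu--Pag\`es, Lemmas 4.2--4.3: joint convexity from the affine drift, Jensen for the minimum at $\mathbf{0}^{d\times q}$, the coupling $BZ \overset{d}{=} AZ + (BB^\top - AA^\top)^{1/2}Z'$ with conditional Jensen for monotonicity, and convexity of $\sigma^u_m$ in $x$ combined with (iii) and (i) for part (iv). Your side remark that (iv) needs the dependence of $\Phi$ on $ee^\top$ is unnecessary, since (iii) already holds at every fixed point, including $z$.
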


\begin{proof}[Proof of Proposition \ref{A:prop:2}]
    Assumption \ref{A:assump:3} directly implies $\bar{X}^u_0 \preceq_{cv} \bar{Y}^u_0$, $\forall u \in I$. Assume $\bar{X}^u_m \preceq_{cv} \bar{Y}^u_m$, $\forall u \in I$, or equivalently, $\bar{\mu}^u_m \preceq_{cv} \bar{\nu}^u_m$, $\forall u \in I$. Let $\varphi \in \mathbb{C}_{cv}(\mathbb{R}^d, \mathbb{R})$ with linear growth. Then, for any $u \in I$:
    \begin{align*}
        \mathbb{E}\Big[\varphi\big(\bar{X}^u_{m+1}\big)\Big] &= \mathbb{E} \Big[\varphi\big(b_m(\bar{X}^u_m) +\sigma^u_m(\bar{X}^u_m, \bar{\boldsymbol{\mu}}_m) Z^u_{m+1}\big)\Big] \\
        &= \mathbb{E} \Big[\mathbb{E} \Big[\varphi\big(b_m(\bar{X}^u_m) +\sigma^u_m(\bar{X}^u_m, \bar{\boldsymbol{\mu}}_m) Z^u_{m+1}\big) \Big| \mathcal{F}_m\Big]\Big] \\
        & = \int_{\mathbb{R}^d} \bar{\mu}^u_m (\mathrm{d}x) \mathbb{E} \Big[\varphi\big(b_m(x) +\sigma^u_m(x, \bar{\boldsymbol{\mu}}_m) Z^u_{m+1}\big)\Big] \\
        & \quad \text{(the integrability is due to Lemma \ref{A:lemma:9} (i) and Lemma \ref{A:lemma:1} (iv))} \\
        & \leq \int_{\mathbb{R}^d} \bar{\mu}^u_m (\mathrm{d}x) \mathbb{E} \Big[\varphi\big(b_m(x) +\sigma^u_m(x, \bar{\boldsymbol{\nu}}_m) Z^u_{m+1}\big)\Big] \\
        & \quad \text{(by Assumption \ref{A:assump:3} (iii) and Lemma \ref{A:lemma:1} (iii), since $\bar{\mu}^u_m \preceq_{cv} \bar{\nu}^u_m$, $\forall u \in I$)} \\
        & \leq \int_{\mathbb{R}^d} \bar{\nu}^u_m (\mathrm{d}x) \mathbb{E} \Big[\varphi\big(b_m(x) +\sigma^u_m(x, \bar{\boldsymbol{\nu}}_m) Z^u_{m+1}\big)\Big] \\
        & \quad \text{(by Lemma \ref{A:lemma:1} (iv), since $\bar{\mu}^u_m \preceq_{cv} \bar{\nu}^u_m$)} \\
        & \leq \int_{\mathbb{R}^d} \bar{\nu}^u_m (\mathrm{d}x) \mathbb{E} \Big[\varphi\big(b_m(x) +\theta^u_m(x, \bar{\boldsymbol{\nu}}_m) Z^u_{m+1}\big)\Big] \\
        & \quad \text{(by Assumption \ref{A:assump:3} (iv) and Lemma \ref{A:lemma:1} (iii))} \\
        &= \mathbb{E} \Big[\varphi\big(\bar{Y}^u_{m+1}\big)\Big].
    \end{align*}
    Thus, $\bar{X}^u_{m+1} \preceq_{cv} \bar{Y}^u_{m+1}$ by applying Lemma \ref{A:lemma:2}. One concludes the proof by a forward induction.
\end{proof}

\subsubsection{Multi-marginal case}

We next extend the one-marginal comparison to convex functionals depending on multiple time marginals. To this end, we use a backward induction argument based on the following recursively defined functions.

Fix $T > 0$ and define $M := \frac{T}{h}$ if $\frac{T}{h} \in \mathbb{N}$, otherwise $M := \lfloor \frac{T}{h}\rfloor +1$. For every $u \in I$, we recursively define a sequence of functions
$$
\Phi^u_m: (\mathbb{R}^d)^{m+1} \times (\mathcal{P}_2 (\mathbb{R}^d)^I)^\infty \longrightarrow \mathbb{R}, \quad m = \mathbb{N} \cup \{\infty\},
$$
in a backward way as follows: \\ 
$\bullet$ \quad Set for $m \geq M$ or $m = \infty$:
\begin{equation}
    \Phi^u_m \big(x_{0:M-1}, \underbrace{x_M, \cdots,x_M}_{\text{\#: $m-M+1$}}; \boldsymbol{\mu}_{0:\infty}\big) = F\big(x_{0:M-1}, \underbrace{x_M,x_M, \cdots}_{\text{\#: $\infty$}}\big),
    \label{A:eqn:13}
\end{equation}
where $F: (\mathbb{R}^d)^{\infty} \rightarrow \mathbb{R}$ is a convex function with quadratic growth in the sense that
\begin{equation}
    \exists C >0, \; \forall x:=x_{0:\infty} \in (\mathbb{R}^d)^\infty \text{ such that } |F(x)| \leq C\Big(1+\|x\|^2_K\Big),
    \label{A:eqn:54}
\end{equation}
where $\|x\|^2_K$ is defined as follows:
\begin{equation*}
    \|x\|^2_K:= \sum_{m=0}^\infty \mathrm{e}^{-K t_m} \frac{|x_m|^2 + |x_{m+1}|^2}{2} \Delta t_m = \sum_{m=0}^\infty \mathrm{e}^{-K t_m} \frac{|x_m|^2 + |x_{m+1}|^2}{2} h
\end{equation*}
with $t_m := m\cdot h$ and $\Delta t_m := t_{m+1}-t_m$. \\
$\bullet$ \quad Set for $m = M-1$: 
\begin{equation}
    \Phi^u_m\big(x_{0:m}; \boldsymbol{\mu}_{0:\infty}\big) = \mathbb{E} \Big[\Phi^u_{m+1} \big(x_{0:m}, b_m(x_m) + \sigma^u_m (x_m, \boldsymbol{\mu}_m)Z^u_{m+1}; \boldsymbol{\mu}_{0:\infty}\big)\Big],
    \label{A:eqn:66}
\end{equation}
where $b_m$, $\sigma^u_m$ and $Z^u_{m+1}$ are given in \eqref{A:eqn:64} and \eqref{A:eqn:65} if $T/h \in \mathbb{N}$, otherwise
\begin{align}
    & b_m(x) := x + (T-t_{M-1}) \cdot b(t_m,x), \quad  Z^u_T := \frac{B^u_T -B^u_{t_{M-1}}}{\sqrt{T-t_{M-1}}} \sim \mathcal{N}(0, \boldsymbol{1}_q), \label{A:eqn:67} \\
    & \sigma^u_m(x, \boldsymbol{\mu}) := \sqrt{T-t_{M-1}} \cdot \sigma(t_m, x, [G \boldsymbol{\mu}]^u).\label{A:eqn:68}
\end{align}

$\bullet$ \quad Set for $m < M-1$:
\begin{align}
    \Phi^u_m\big(x_{0:m}; \boldsymbol{\mu}_{0:\infty}\big) &=  \Big(Q^u_{m+1} \Phi^u_{m+1} \big(x_{0:m}, \cdot; \boldsymbol{\mu}_{0:\infty}\big)\Big)\big(x_m, \sigma^u_m (x_m, \boldsymbol{\mu}_m)\big) \nonumber \\
    & = \mathbb{E} \Big[\Phi^u_{m+1} \big(x_{0:m}, b_m(x_m) + \sigma^u_m (x_m, \boldsymbol{\mu}_m)Z^u_{m+1}; \boldsymbol{\mu}_{0:\infty}\big)\Big]
    \label{A:eqn:14}
\end{align}
 with $b_m$, $\sigma^u_m$ and $Z^u_{m+1}$ defined as usual.

The functions $\{\Phi^u_m\}$ share the following properties.

\begin{lemma}[Lemma 4.4 in \cite{LiuPages2023}]
    For every $m \in \mathbb{N} \cup \{\infty\}$ and $u \in I$: 
    
    \textup{(i)} For a fixed $\boldsymbol{\mu}_{0:\infty} \in (\mathcal{P}_2(\mathbb{R}^d)^I)^\infty$, the function $\Phi^u_m(\cdot; \boldsymbol{\mu}_{0:\infty})$ is convex and has a quadratic growth in $x_{0:m}$ so that $\Phi^u_m$ is well-defined.

    \vspace{1mm}

    \textup{(ii)} For a fixed $x_{0:m} \in (\mathbb{R}^d)^{m+1}$, the function $\Phi^u_m(x_{0:m}; \cdot)$ is nondecreasing in $\boldsymbol{\mu}_{0:\infty}$ with respect to the convex order in the sense that for any $\boldsymbol{\mu}_{0:\infty}, \boldsymbol{\nu}_{0:\infty} \in (\mathcal{P}_2(\mathbb{R}^d)^I)^\infty$ with $\mu^u_j \preceq_{cv} \nu^u_j, \; \forall j \in \mathbb{N}, \; \forall u \in I$,
    $$
    \Phi^u_m \big(x_{0:m}; \boldsymbol{\mu}_{0:\infty}\big) \leq \Phi^u_m \big(x_{0:m}; \boldsymbol{\nu}_{0:\infty}\big).
    $$
    \label{A:lemma:4}
\end{lemma}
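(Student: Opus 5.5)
The argument is a backward induction on $m$, from $m\ge M$ (and $m=\infty$) down to $m=0$, proving (i) and (ii) together. For $m\ge M$ or $m=\infty$, $\Phi^u_m$ is by \eqref{A:eqn:13} the composition of $F$ with the linear map $x_{0:m}\mapsto (x_{0:M-1},x_M,x_M,\dots)$. It is therefore convex, since $F$ is convex. It does not depend on $\boldsymbol{\mu}_{0:\infty}$, so (ii) holds trivially. For the growth, \eqref{A:eqn:54} gives
\[
|\Phi^u_m|\le C\bigl(1+\|(x_{0:M-1},x_M,x_M,\dots)\|_K^2\bigr)\le C'\bigl(1+|x_{0:M}|^2\bigr),
\]
because $\sum_{m\ge M}\mathrm{e}^{-Kt_m}h<\infty$. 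This gives quadratic growth in $x_{0:m}$.

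For the inductive step, assume (i) and (ii) hold for $\Phi^u_{m+1}$, with $m\le M-1$. The step $m=M-1$ uses the modified coefficients \eqref{A:eqn:67}--\eqref{A:eqn:68} and is handled identically. For fixed $x_{0:m-1}$, $\boldsymbol{\mu}_{0:\infty}$ and $u$, set
\[
\Psi(x_{0:m},e):=\mathbb{E}\bigl[\Phi^u_{m+1}(x_{0:m},b_m(x_m)+eZ^u_{m+1};\boldsymbol{\mu}_{0:\infty})\bigr].
\]
Well-definedness and quadratic growth follow from the quadratic growth of $\Phi^u_{m+1}$, the affine (hence linear-growth) map $b_m$, the linear growth of $\sigma^u_m$ in $x$ (from Assumption \ref{A:assump:1} and \eqref{A:eqn:22}, using $\boldsymbol{\mu}_m\in\mathcal{H}(\mathbb{R}^d)$), and $\mathbb{E}|Z|^2<\infty$. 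Because $b_m$ is affine, $\Psi$ is jointly convex in $(x_{0:m},e)$.

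Next, I extend Lemma \ref{A:lemma:1} (ii)--(iii) from linear to quadratic growth, with the extra variables $x_{0:m}$ frozen. The argument is the one in \cite{LiuPages2023}. By symmetry of $Z\sim\mathcal{N}(0,\mathbf{I}_q)$ and convexity, $e\mapsto\Psi(\cdot,e)$ is minimal at $0$. If $A\preceq B$, then $AZ$ has the same law as $BZ'$ for some $Z'$ with $\mathbb{E}[Z\mid Z']$ suitably related; one can write $BZ\overset{d}{=}AZ+CZ''$ with $CC^\top=BB^\top-AA^\top$ and $Z''$ an independent Gaussian. Jensen's inequality conditional on $Z$ then gives $\Psi(\cdot,A)\le\Psi(\cdot,B)$. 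Given these monotonicity properties, convexity of $x_{0:m}\mapsto \Phi^u_m(x_{0:m};\boldsymbol{\mu})=\Psi(x_{0:m},\sigma^u_m(x_m,\boldsymbol{\mu}_m))$ follows as in Lemma \ref{A:lemma:1} (iv). For $\lambda\in[0,1]$, Assumption \ref{A:assump:3} (ii) and monotonicity in $e$ give
\[
\Psi\bigl(\lambda x+(1-\lambda)y,\sigma^u_m(\lambda x_m+(1-\lambda)y_m)\bigr)\le \Psi\bigl(\lambda x+(1-\lambda)y,\lambda\sigma^u_m(x_m)+(1-\lambda)\sigma^u_m(y_m)\bigr),
\]
and joint convexity of $\Psi$ then yields convexity of $\Phi^u_m$.

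For (ii), take $\mu^u_j\preceq_{cv}\nu^u_j$ for all $j$ and $u$. First, the induction hypothesis (ii) for $\Phi^u_{m+1}$ applied pointwise inside the expectation replaces $\boldsymbol{\mu}_{0:\infty}$ by $\boldsymbol{\nu}_{0:\infty}$ in the integrand. Second, Assumption \ref{A:assump:3} (iii) gives $\sigma^u_m(x_m,\boldsymbol{\mu}_m)\preceq\sigma^u_m(x_m,\boldsymbol{\nu}_m)$, and the monotonicity of $\Psi$ in $e$ converts this into the inequality for $\Phi^u_m$.

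I expect the main obstacle to be Lemma \ref{A:lemma:1} (iii), that is, monotonicity in $e$ with respect to $\preceq$, for convex functions of quadratic rather than linear growth. The Gaussian decomposition argument above only needs integrability, which quadratic growth together with Gaussian moments provides. Care is also needed at $m=\infty$, and in making sure the growth constants stay finite through the finitely many steps $m<M$.
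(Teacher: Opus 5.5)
Your proposal is correct and follows the paper's proof essentially step for step: a backward induction in which convexity of $\Phi^u_m$ comes from Assumption \ref{A:assump:3}~(ii), monotonicity in $e$, and joint convexity of $\Phi^u_{m+1}$ composed with the affine $b_m$, while monotonicity in $\boldsymbol{\mu}_{0:\infty}$ comes from Assumption \ref{A:assump:3}~(iii) together with the induction hypothesis (you only perform these two substitutions in the opposite order). Your explicit Gaussian-decomposition and conditional-Jensen argument for monotonicity in $e$ under quadratic rather than linear growth is a useful addition, since the paper invokes Lemma \ref{A:lemma:1}~(iii), which is stated only for linear growth, without comment.
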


\begin{proof}
    \textup{(i)} For every $m \geq M$ or $m= \infty$, the function $\Phi^u_m$ is convex due to the convexity of $F$. Suppose that the map $x_{0:m+1} \mapsto \Phi^u_{m+1}(x_{0:m+1}; \boldsymbol{\mu}_{0:\infty})$ is convex for some $m \leq M-1$. For any $x_{0:m}, y_{0:m} \in (\mathbb{R}^d)^{m+1}$ and $\lambda \in [0, 1]$, we have
    \begin{align*}
        & \Phi^u_m \big(\lambda x_{0:m} + (1-\lambda) y_{0:m}; \boldsymbol{\mu}_{0:\infty}\big) \\
        & \quad = \mathbb{E} \Big[\Phi^u_{m+1} \big( \lambda x_{0:m} + (1-\lambda) y_{0:m}, b_m\big(\lambda x_m + (1-\lambda)y_m\big) \\
        & \qquad + \sigma^u_m \big(\lambda x_m + (1-\lambda) y_m, \boldsymbol{\mu}_m\big)Z^u_{m+1}; \boldsymbol{\mu}_{0:\infty}\big)\Big] \\
        & \quad \leq \mathbb{E} \Big[\Phi^u_{m+1} \big( \lambda x_{0:m} + (1-\lambda) y_{0:m}, \lambda b_m( x_m) + (1-\lambda) b_m(y_m) \\
        & \qquad + \big(\lambda \sigma^u_m \big(x_m, \boldsymbol{\mu}_m\big)+(1-\lambda) \sigma^u_m \big(y_m, \boldsymbol{\mu}_m\big)\big)Z^u_{m+1}; \boldsymbol{\mu}_{0:\infty}\big)\Big] \\
        & \qquad \text{(by Assumption \ref{A:assump:3} (ii) and Lemma \ref{A:lemma:1} (iii), since $\Phi^u_{m+1} (x_{0:m},\cdot; \boldsymbol{\mu}_{0:\infty})$ is convex)} \\
        & \quad \leq \lambda \mathbb{E} \Big[\Phi^u_{m+1} \big(x_{0:m}, b_m( x_m)  +  \sigma^u_m \big(x_m, \boldsymbol{\mu}_m\big)Z^u_{m+1}; \boldsymbol{\mu}_{0:\infty}\big)\Big] \\
        & \qquad + (1-\lambda) \mathbb{E} \Big[\Phi^u_{m+1} \big(y_{0:m}, b_m(y_m)  +  \sigma^u_m \big(y_m, \boldsymbol{\mu}_m\big)Z^u_{m+1}; \boldsymbol{\mu}_{0:\infty}\big)\Big] \\
        & \quad = \lambda \Phi^u_m \big(x_{0:m}; \boldsymbol{\mu}_{0:\infty}\big) + (1-\lambda)\Phi^u_m \big(y_{0:m}; \boldsymbol{\mu}_{0:\infty}\big).
    \end{align*}
    Thus, the function $\Phi^u_m (\cdot; \boldsymbol{\mu}_{0:\infty})$ is convex, and one completes the proof by a backward induction.
    
    For every $m \geq M$ or $m= \infty$, it is obvious that $\Phi^u_m$ has a quadratic growth in the sense of \eqref{A:eqn:54}. Suppose that $\Phi^u_{m+1}$ has a quadratic growth for some $m \leq M-1$. Then, $\Phi^u_m$ also has a quadratic growth as $b_m$ and $\sigma^u_m$ have a linear growth by Assumption \ref{A:assump:1}, and one concludes by a backward induction. 
    
    \textup{(ii)} Firstly, notice that for any $\boldsymbol{\mu}_{0:\infty}, \boldsymbol{\nu}_{0:\infty} \in (\mathcal{P}_2(\mathbb{R}^d)^I)^\infty$ with $\mu^u_j \preceq_{cv} \nu^u_j, \; \forall j \in \mathbb{N}, \; \forall u \in I$, we have for $m \geq M$ or $m= \infty$,
    \begin{equation*}
        \Phi^u_m \big(x_{0:M-1}, \underbrace{x_M, \cdots,x_M}_{\text{\#: $m-M+1$}}; \boldsymbol{\mu}_{0:\infty}\big) = F\big(x_{0:M-1}, \underbrace{x_M,x_M, \cdots}_{\text{\#: $\infty$}}\big) = \Phi^u_m \big(x_{0:M-1}, \underbrace{x_M, \cdots,x_M}_{\text{\#: $m-M+1$}}; \boldsymbol{\nu}_{0:\infty}\big).
    \end{equation*}
    Next assume that $\Phi^u_{m+1} (x_{0:m+1}; \cdot)$ is nondecreasing with respect to the convex order of $\boldsymbol{\mu}_{0:\infty}$ for some $m \leq M-1$. Then, we obtain that
    \begin{align*}
        \Phi^u_m \big(x_{0:m}; \boldsymbol{\mu}_{0:\infty}\big) & = \mathbb{E} \Big[\Phi^u_{m+1} \big(x_{0:m}, b_m(x_m)+\sigma^u_m (x_m, \boldsymbol{\mu}_m)Z^u_{m+1}; \boldsymbol{\mu}_{0:\infty}\big)\Big] \\
        & \leq  \mathbb{E} \Big[\Phi^u_{m+1} \big(x_{0:m}, b_m(x_m)+\sigma^u_m (x_m, \boldsymbol{\nu}_m)Z^u_{m+1}; \boldsymbol{\mu}_{0:\infty}\big)\Big] \\
        & \quad \text{(by Assumption \ref{A:assump:3} and Lemma \ref{A:lemma:1}, since $\Phi^u_{m+1}(x_{0:m}, \cdot; \boldsymbol{\mu}_{0:\infty})$ is convex)} \\
        & \leq \mathbb{E} \Big[\Phi^u_{m+1} \big(x_{0:m}, b_m(x_m)+\sigma^u_m (x_m, \boldsymbol{\nu}_m)Z^u_{m+1}; \boldsymbol{\nu}_{0:\infty}\big)\Big] =  \Phi^u_m \big(x_{0:m}; \boldsymbol{\nu}_{0:\infty}\big).
    \end{align*}
    One completes proof by a backward induction. 
\end{proof}

As $F$ has a quadratic growth in the sense of \eqref{A:eqn:54}, the integrability of $F(\bar{X}^u_{t_0:t_{M-1}}, \bar{X}^u_{T}, \bar{X}^u_{T}, \cdots)$ and $F(\bar{Y}^u_{t_0:t_{M-1}}, \bar{Y}^u_{T}, \bar{Y}^u_{T}, \cdots)$ is guaranteed by Lemma \ref{A:lemma:9} (i). To see this, for $h \in (0,h_0)$,
\begin{align*}
    \mathbb{E} \Big[\big|F\big(\bar{X}^u_{t_0:t_{M-1}}, \bar{X}^u_{T}, \bar{X}^u_{T}, \cdots\big)\big|\Big] & \leq C\left(1+ \frac{1}{2}\sum_{i=0}^\infty \mathrm{e}^{-K h i} \Big(\mathbb{E}\Big[\big|\bar{X}^u_{t_i \wedge T}\big|^2\Big] + \mathbb{E}\Big[\big|\bar{X}^{u}_{t_{i+1} \wedge T}\big|^2\Big]\Big) h\right) \\
    & \leq C \bigg(1 + \frac{\widehat{C}h}{1-\mathrm{e}^{-Kh}}\bigg) \\
    &  \rightarrow C \bigg(1 + \frac{\widehat{C}}{K}\bigg), \quad \text{as $h\rightarrow 0$}.
\end{align*}
Then, we define $\{\mathcal{X}^u_m\}$ as follows:
$$
\mathcal{X}^u_m := \mathbb{E}\Big[F\big(\bar{X}^u_{t_0:t_{M-1}}, \bar{X}^u_{ T}, \bar{X}^u_{T}, \cdots\big)\Big|  \mathcal{F}_{t_m \wedge T}\Big], \quad \text{for any $m \in \mathbb{N} \cup \{\infty\}$ and  $u \in I$,} 
$$
 where $\mathcal{F}_t := \{X^u_0, Y^u_0, Z^u_1, \cdots, Z^u_{\lfloor \frac{t}{h}\rfloor}, Z^u_t\}_{u \in I}$ with $Z^u_t := (B^u_t - B^u_{[t]^h})/(t - [t]^h)$. Recall that $\bar{\mu}^u_m := \mathcal{L}(\bar{X}^u_m)$ and $\bar{\boldsymbol{\mu}}_m := \{\bar{\mu}^u_m\}_{u \in I}$.

\begin{lemma}
    For every $m \in \mathbb{N} \cup \{\infty\}$ and $u \in I$, we have
    $
    \Phi^u_m \big(\bar{X}^u_{t_0 \wedge T:t_m \wedge T}; \bar{\boldsymbol{\mu}}_{0:\infty}\big) = \mathcal{X}^u_m.
    $
    \label{A:lemma:3}
\end{lemma}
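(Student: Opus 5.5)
We argue by backward induction on $m$, using the Markov structure of the Euler scheme \eqref{A:eqn:5} together with the independence of the innovation $Z^u_{m+1}$ from $\mathcal{F}_{t_m}$. The key point is that the flow $\bar{\boldsymbol{\mu}}_{0:\infty}$ is deterministic. Plugging it into $\Phi^u_m$ as a fixed parameter makes the recursion \eqref{A:eqn:14} exactly the one-step conditional expectation of the scheme for agent $u$.

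\emph{Base case $m \geq M$ or $m=\infty$.} Here $t_m \wedge T = T$, so $\bar{X}^u_{t_0\wedge T:t_m\wedge T} = (\bar{X}^u_{t_0:t_{M-1}}, \bar{X}^u_T,\dots,\bar{X}^u_T)$. By \eqref{A:eqn:13}, $\Phi^u_m$ evaluated at this point equals $F(\bar{X}^u_{t_0:t_{M-1}}, \bar{X}^u_T, \bar{X}^u_T,\dots)$. This random variable is $\mathcal{F}_T$-measurable: the extended scheme \eqref{A:eqn:30} at time $T$ uses $B^u_T - B^u_{t_{M-1}}$, which is a multiple of $Z^u_T$. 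It is also integrable, by the quadratic growth bound shown just before the lemma. Hence it coincides with its conditional expectation given $\mathcal{F}_T$, which is $\mathcal{X}^u_m$.

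\emph{Induction step.} Assume the identity for $m+1 \leq M$ and fix $m \le M-1$. By the tower property, $\mathcal{X}^u_m = \mathbb{E}[\mathcal{X}^u_{m+1}\,|\,\mathcal{F}_{t_m}]$, since $\mathcal{F}_{t_m} \subset \mathcal{F}_{t_{m+1}\wedge T}$. By the induction hypothesis,
$$\mathcal{X}^u_{m+1} = \Phi^u_{m+1}\big(\bar{X}^u_{t_0:t_m}, b_m(\bar{X}^u_m) + \sigma^u_m(\bar{X}^u_m,\bar{\boldsymbol{\mu}}_m) Z^u_{m+1}; \bar{\boldsymbol{\mu}}_{0:\infty}\big).$$
For $m=M-1$ with $T/h\notin\mathbb{N}$, the modified $b_m$, $\sigma^u_m$ and $Z^u_T$ of \eqref{A:eqn:67}--\eqref{A:eqn:68} are used instead; they reproduce $\bar{X}^u_T$ via \eqref{A:eqn:30}. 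In either case $\bar{X}^u_{t_0:t_m}$ is $\mathcal{F}_{t_m}$-measurable, while $Z^u_{m+1}$ (resp. $Z^u_T$) is independent of $\mathcal{F}_{t_m}$ and has law $\mathcal{N}(0,\mathbf{I}_q)$. The parameter $\bar{\boldsymbol{\mu}}_{0:\infty}$ is deterministic. The freezing lemma therefore gives
$$\mathbb{E}[\mathcal{X}^u_{m+1}|\mathcal{F}_{t_m}] = \psi(\bar{X}^u_{t_0:t_m}), \qquad \psi(x_{0:m}) := \mathbb{E}\big[\Phi^u_{m+1}(x_{0:m}, b_m(x_m)+\sigma^u_m(x_m,\bar{\boldsymbol{\mu}}_m)Z^u_{m+1};\bar{\boldsymbol{\mu}}_{0:\infty})\big].$$
By \eqref{A:eqn:66}/\eqref{A:eqn:14}, $\psi = \Phi^u_m(\cdot;\bar{\boldsymbol{\mu}}_{0:\infty})$, which closes the induction. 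Since $m$ runs over only finitely many values below $M$, the backward induction terminates. The case $m=\infty$ is covered by the base case.

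\emph{Main obstacle: justifying the freezing lemma.} This needs measurability and integrability. For measurability, $\Phi^u_{m+1}(\cdot;\bar{\boldsymbol{\mu}}_{0:\infty})$ is convex by Lemma \ref{A:lemma:4}(i), hence continuous and Borel. For integrability, the quadratic growth from Lemma \ref{A:lemma:4}(i) combined with the uniform second-moment bounds of Lemma \ref{A:lemma:9}(i) and the linear growth of $b_m$ and $\sigma^u_m$ shows that $\Phi^u_{m+1}(\bar{X}^u_{t_0:t_{m+1}\wedge T};\bar{\boldsymbol{\mu}}_{0:\infty})$ is integrable. This requires applying the quadratic growth in the weighted norm \eqref{A:eqn:54}, with the tail frozen at $x_M$; the geometric series argument displayed before the lemma handles that. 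A further subtlety is that $\mathcal{F}_m$ contains the information of all agents $v\ne u$. Since $\bar{X}^u$ depends on the other agents only through the deterministic flow $\bar{\boldsymbol{\mu}}$, and the $Z^v$ are mutually independent, $Z^u_{m+1}$ remains independent of $\mathcal{F}_{t_m}$, so conditioning on the larger $\sigma$-algebra causes no problem.
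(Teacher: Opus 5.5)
Your proposal is correct and follows the paper's own backward induction. The base case for $m\ge M$ or $m=\infty$ comes straight from the definition of $\Phi^u_m$, and the induction step uses the tower property together with the one-step recursion \eqref{A:eqn:14}/\eqref{A:eqn:66}. Beyond the paper, you also spell out the justifications it leaves implicit: the freezing-lemma details (measurability, integrability, independence of $Z^u_{m+1}$ from $\mathcal{F}_{t_m}$) and the modified final step when $T/h\notin\mathbb{N}$.
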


\noindent The proof is similar to that of Lemma 4.5 in \cite{LiuPages2023}, but we provide one here for completeness.

\begin{proof}[Proof of Lemma \ref{A:lemma:3}]
     The proof follows from a backward induction. Firstly, it is obvious that for $\forall m \geq M \text{ or } m = \infty$ as
     $$
     \Phi^u_m \big(\bar{X}^u_{t_0:t_{M-1}},\underbrace{\bar{X}^u_{T}, \cdots, \bar{X}^u_{T}}_{\text{\#: $m-M+1$}}; \bar{\boldsymbol{\mu}}_{0:\infty}\big) = F\big(\bar{X}^u_{t_0:t_{M-1}}, \underbrace{\bar{X}^u_{T}, \bar{X}^u_{T}, \cdots}_{\text{\#: $\infty$}}\big) = \mathcal{X}^u_m.
     $$
     Assume that $\Phi^u_{m+1} ( \bar{X}^u_{t_0 \wedge T:t_{m+1} \wedge T}; \bar{\boldsymbol{\mu}}_{0:\infty}) = \mathcal{X}^u_{m+1}$ holds for some $m \leq M-1$. Then, we prove for the case of $m$,
    \begin{align*}
        \mathcal{X}^u_m & = \mathbb{E} \Big[\mathcal{X}^u_{m+1} \Big|\mathcal{F}_m\Big] = \mathbb{E} \Big[\Phi^u_{m+1} \big( \bar{X}^u_{t_0 \wedge T:t_{m+1} \wedge T}; \bar{\boldsymbol{\mu}}_{0:\infty}\big) \Big|\mathcal{F}_m\Big] \\
        & = \mathbb{E} \Big[\Phi^u_{m+1} \big(\bar{X}^u_{0:m}, b_m (\bar{X}^u_{m}) + \sigma^u_m (\bar{X}^u_m, \bar{\boldsymbol{\mu}}_m) Z^u_{m+1}; \bar{\boldsymbol{\mu}}_{0:\infty}\big) \Big|\mathcal{F}_m\Big] \\
        & = \Phi^u_m \big(\bar{X}^u_{0:m}; \bar{\boldsymbol{\mu}}_{0:\infty}\big),
    \end{align*}
    where we used \eqref{A:eqn:14} and \eqref{A:eqn:66} for the last line. 
\end{proof}

Similarly, for every $u \in I$, we define
$$
\Psi^u_m : (\mathbb{R}^d)^{m+1} \times (\mathcal{P}_2(\mathbb{R}^d)^I)^\infty \longrightarrow \mathbb{R}, \quad \forall m \in \mathbb{N} \cup \{\infty\},
$$
by\\
$\bullet$ \quad Set for $m \geq M$ or $m = \infty$:
\begin{equation*}
    \Psi^u_m \big(x_{0:M-1}, \underbrace{x_M, \cdots, x_M}_{\text{\#: $m-M+1$}}; \boldsymbol{\mu}_{0:\infty}\big) = F\big({ x_{0:M-1}}, \underbrace{x_M,x_M, \cdots}_{\text{\#: $\infty$}}\big), 
\end{equation*}
where $F: (\mathbb{R}^d)^\infty \rightarrow \mathbb{R}$ is a convex function with quadratic growth in the sense of \eqref{A:eqn:54}. \\
$\bullet$ \quad Set for $m = M-1$: 
\begin{equation*}
    \Psi^u_m\big(x_{0:m}; \boldsymbol{\mu}_{0:\infty}\big) = \mathbb{E} \Big[\Psi^u_{m+1} \big(x_{0:m}, b_m(x_m) + \theta^u_m (x_m, \boldsymbol{\mu}_m)Z^u_{m+1}; \boldsymbol{\mu}_{0:\infty}\big)\Big],
\end{equation*}
where $b_m$, $\theta^u_m$ and $Z^u_{m+1}$ are given in \eqref{A:eqn:64} and \eqref{A:eqn:65} if $T/h \in \mathbb{N}$, otherwise
\begin{align}
    & b_m(x) := x + (T-t_{M-1}) \cdot b(t_m,x), \quad  Z^u_T := \frac{B^u_T -B^u_{t_{M-1}}}{\sqrt{T-t_{M-1}}} \sim \mathcal{N}(0, \boldsymbol{1}_q), \nonumber \\
    & \theta^u_m(x, \boldsymbol{\mu}) := \sqrt{T-t_{M-1}} \cdot \theta(t_m, x, [K \boldsymbol{\mu}]^u). \label{A:eqn:69}
\end{align}

$\bullet$ \quad Set for $m < M-1$:
\begin{align*}
    \Psi^u_m\big(x_{0:m}; \boldsymbol{\mu}_{0:\infty}\big) &=  \Big(Q^u_{m+1} \Psi^u_{m+1} \big(x_{0:m}, \cdot; \boldsymbol{\mu}_{0:\infty}\big)\Big)\big(x_m, \theta^u_m (x_m, \boldsymbol{\mu}_m)\big) \nonumber \\
    & = \mathbb{E} \Big[\Psi^u_{m+1} \big(x_{0:m}, b_m(x_m) + \theta^u_m (x_m, \boldsymbol{\mu}_m)Z^u_{m+1}; \boldsymbol{\mu}_{0:\infty}\big) \Big]
\end{align*}
 with $b_m$, $\theta^u_m$ and $Z^u_{m+1}$ defined as usual.

Recall that $\bar{\nu}^u_m := \mathcal{L}(\bar{Y}^u_m)$, and $\bar{\boldsymbol{\nu}}_m :=  \{\bar{\nu}^u_m\}_{u \in I}$. By a similar argument as the proof of Lemma \ref{A:lemma:3}, we obtain that $\text{for any $m \in \mathbb{N} \cup \{\infty\}$ and $u \in I$}$,
\begin{equation}
    \Psi^u_m \big(\bar{Y}^u_{t_0 \wedge T:t_m \wedge T}; \bar{\boldsymbol{\nu}}_{0:\infty}\big) = \mathcal{Y}^u_m := \mathbb{E}\Big[F\big(\bar{Y}^u_{t_0:t_{M-1}}, \bar{Y}^u_{T}, \bar{Y}^u_{T}, \cdots\big)\Big|  \mathcal{F}_{t_m \wedge T}\Big].
    \label{A:eqn:9}
\end{equation}

The main result of this section is the following multi-marginal convex ordering result.
\begin{theorem}
    Under Assumptions \ref{A:assump:1} and \ref{A:assump:2}, for every convex function $F: (\mathbb{R}^d)^\infty \rightarrow \mathbb{R}$ with quadratic growth in the sense of \eqref{A:eqn:54}, we have
    $$
    \mathbb{E}\Big[F\big(\bar{X}^u_{t_0:t_{M-1}}, \bar{X}^u_{T}, \bar{X}^u_{ T}, \cdots\big)\Big] \leq \mathbb{E}\Big[F\big(\bar{Y}^u_{t_0:t_{M-1}}, \bar{Y}^u_{T}, \bar{Y}^u_{T}, \cdots\big)\Big], \quad \forall u \in I.
    $$
    \label{A:prop:3}
\end{theorem}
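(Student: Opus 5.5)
Since $\mathcal{F}_{t_0\wedge T}$ is generated by the initial data only, taking $m=0$ in Lemma \ref{A:lemma:3} and \eqref{A:eqn:9} and then taking expectations gives
\begin{equation*}
\mathbb{E}\big[F(\bar{X}^u_{t_0:t_{M-1}},\bar{X}^u_T,\cdots)\big]=\mathbb{E}\big[\Phi^u_0(\bar{X}^u_0;\bar{\boldsymbol{\mu}}_{0:\infty})\big],\qquad
\mathbb{E}\big[F(\bar{Y}^u_{t_0:t_{M-1}},\bar{Y}^u_T,\cdots)\big]=\mathbb{E}\big[\Psi^u_0(\bar{Y}^u_0;\bar{\boldsymbol{\nu}}_{0:\infty})\big].
\end{equation*}
It therefore suffices to prove the chain
\begin{equation*}
\mathbb{E}\big[\Phi^u_0(\bar{X}^u_0;\bar{\boldsymbol{\mu}}_{0:\infty})\big]\le \mathbb{E}\big[\Phi^u_0(\bar{Y}^u_0;\bar{\boldsymbol{\mu}}_{0:\infty})\big]\le \mathbb{E}\big[\Phi^u_0(\bar{Y}^u_0;\bar{\boldsymbol{\nu}}_{0:\infty})\big]\le \mathbb{E}\big[\Psi^u_0(\bar{Y}^u_0;\bar{\boldsymbol{\nu}}_{0:\infty})\big].
\end{equation*}
Assumption \ref{A:assump:2} implies Assumption \ref{A:assump:3} for every step $m$, including the last, possibly shortened step defined via \eqref{A:eqn:67}--\eqref{A:eqn:69}, because those coefficients are merely rescalings of $\sigma$ and $\theta$. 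So the lemmas above are all available.

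\textbf{First inequality.} By Lemma \ref{A:lemma:4}(i), $\Phi^u_0(\cdot;\bar{\boldsymbol{\mu}}_{0:\infty})$ is convex with quadratic growth. Assumption \ref{A:assump:3}(v) gives $\bar{X}^u_0\preceq_{cv}\bar{Y}^u_0$. The characterization in Lemma \ref{A:lemma:2} only covers convex test functions of linear growth, so I would apply the definition of convex order directly for general convex functions. The integrability is guaranteed because $\sup_u\mathbb{E}|X^u_0|^2<\infty$ by Assumption \ref{A:assump:1}(ii).

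\textbf{Second inequality.} By Proposition \ref{A:prop:2}, $\bar{\mu}^u_j\preceq_{cv}\bar{\nu}^u_j$ for all $j$ and $u$. Lemma \ref{A:lemma:4}(ii) then gives the inequality pointwise in $x_0$.

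\textbf{Third inequality.} This is the main step. I would prove by backward induction on $m$ that
\begin{equation*}
\Phi^u_m(x_{0:m};\bar{\boldsymbol{\nu}}_{0:\infty})\le\Psi^u_m(x_{0:m};\bar{\boldsymbol{\nu}}_{0:\infty})\qquad\text{for all } x_{0:m}.
\end{equation*}
For $m\ge M$ both functions equal $F$, so the claim holds with equality. For the induction step, assume the claim at $m+1$ and write
\begin{align*}
\Phi^u_m(x_{0:m};\bar{\boldsymbol{\nu}}_{0:\infty})&=\big(Q^u_{m+1}\Phi^u_{m+1}(x_{0:m},\cdot;\bar{\boldsymbol{\nu}}_{0:\infty})\big)\big(x_m,\sigma^u_m(x_m,\bar{\boldsymbol{\nu}}_m)\big)\\
&\le\big(Q^u_{m+1}\Phi^u_{m+1}(x_{0:m},\cdot;\bar{\boldsymbol{\nu}}_{0:\infty})\big)\big(x_m,\theta^u_m(x_m,\bar{\boldsymbol{\nu}}_m)\big)\\
&\le\big(Q^u_{m+1}\Psi^u_{m+1}(x_{0:m},\cdot;\bar{\boldsymbol{\nu}}_{0:\infty})\big)\big(x_m,\theta^u_m(x_m,\bar{\boldsymbol{\nu}}_m)\big)=\Psi^u_m(x_{0:m};\bar{\boldsymbol{\nu}}_{0:\infty}).
\end{align*}
The first inequality uses Assumption \ref{A:assump:3}(iv), namely $\sigma^u_m\preceq\theta^u_m$, together with the monotonicity in Lemma \ref{A:lemma:1}(iii). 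That monotonicity applies because $\Phi^u_{m+1}(x_{0:m},\cdot)$ is convex by Lemma \ref{A:lemma:4}(i). The second inequality is the induction hypothesis integrated against the Gaussian law of $Z^u_{m+1}$.

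\textbf{Main obstacle.} Lemma \ref{A:lemma:1} is stated for convex $\varphi$ of linear growth, whereas the $\Phi^u_{m+1}$ have quadratic growth. The proofs of (i)--(iii) in \cite{LiuPages2023} only use convexity and Gaussian integrability, so I expect they carry over. If needed, I would approximate $\Phi^u_{m+1}$ from below by an increasing sequence of convex functions of linear growth (suprema of finitely many affine minorants) and pass to the limit by monotone convergence, which is justified by the moment bounds of Lemma \ref{A:lemma:9}(i). The same approximation would handle the first inequality. Finally, evaluating the backward comparison at $m=0$ and taking expectations in $\bar{Y}^u_0$ closes the chain and proves the theorem.
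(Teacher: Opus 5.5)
Your proposal is correct and follows the paper's proof essentially step for step. You use the same chain $\mathbb{E}[\Phi^u_0(\bar X^u_0;\bar{\boldsymbol\mu}_{0:\infty})]\le\mathbb{E}[\Phi^u_0(\bar Y^u_0;\bar{\boldsymbol\mu}_{0:\infty})]\le\mathbb{E}[\Phi^u_0(\bar Y^u_0;\bar{\boldsymbol\nu}_{0:\infty})]\le\mathbb{E}[\Psi^u_0(\bar Y^u_0;\bar{\boldsymbol\nu}_{0:\infty})]$, justified by initial convex order, by Lemma \ref{A:lemma:4}(ii) with Proposition \ref{A:prop:2}, and by the backward induction $\Phi^u_m\le\Psi^u_m$ via $\sigma^u_m\preceq\theta^u_m$ and Lemma \ref{A:lemma:1}(iii). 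Your extra care about applying Lemma \ref{A:lemma:1} to quadratic-growth functions (monotone approximation by finite maxima of affine minorants, or directly via convexity and Gaussian integrability) addresses a point the paper passes over silently, and it works as you describe.
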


\begin{proof}
    We start by proving by a backward induction that 
    \begin{equation}
        \Phi^u_m \leq \Psi^u_m, \quad \forall m \in \mathbb{N} \cup \{\infty\}, \; \forall u \in I.
        \label{A:eqn:10}
    \end{equation}
    It follows from the definition of $\{\Phi^u_m\}$ and $\{\Psi^u_m\}$ that for $m \geq M$ or $m = \infty$,
    $$
    \Phi^u_m \big(x_{0:M-1}, \underbrace{x_M,\cdots, x_M}_{\text{\#: $m-M+1$}}; \boldsymbol{\mu}_{0:\infty}\big) := F\big(x_{0:M-1}, \underbrace{x_M, x_M, \cdots}_{\text{\#: $\infty$}} \big) =: \Psi^u_m\big(x_{0:M-1}, \underbrace{x_M, \cdots, x_M}_{\text{\#: $m-M+1$}}; \boldsymbol{\mu}_{0:\infty}\big).
    $$
    Assume now $\Phi^u_{m+1} \leq \Psi^u_{m+1}$ for some $m \leq M-1$, and consider for any $x_{0:m} \in (\mathbb{R}^d)^{m+1}$ and $\boldsymbol{\mu}_{0:\infty} \in (\mathcal{P}_2(\mathbb{R}^d)^I)^\infty$:
    \begin{align*}
        \Phi^u_m\big(x_{0:m}; \boldsymbol{\mu}_{0:\infty}\big) &= \mathbb{E} \Big[\Phi^u_{m+1} \big(x_{0:m}, b_m(x_m) + \sigma^u_m (x_m, \boldsymbol{\mu}_m)Z^u_{m+1}; \boldsymbol{\mu}_{0:\infty}\big)\Big] \\
        & \leq \mathbb{E} \Big[\Phi^u_{m+1} \big(x_{0:m}, b_m(x_m) + \theta^u_m (x_m, \boldsymbol{\mu}_m)Z^u_{m+1}; \boldsymbol{\mu}_{0:\infty}\big)\Big] \\
        & \quad \text{(by Assumption \ref{A:assump:3} (iv), Lemma \ref{A:lemma:1} (iii), and Lemma \ref{A:lemma:4} (i))} \\
        & \leq \mathbb{E} \Big[\Psi^u_{m+1} \big(x_{0:m}, b_m(x_m) + \theta^u_m (x_m, \boldsymbol{\mu}_m)Z^u_{m+1}; \boldsymbol{\mu}_{0:\infty}\big)\Big] \\
        & \quad \text{(since $\Phi^u_{m+1} \leq \Psi^u_{m+1}$ by assumption)} \\
        & = \Psi^u_m\big(x_{0:m}; \boldsymbol{\mu}_{0:\infty}\big),
    \end{align*}
    which completes the proof of $\Phi^u_m \leq \Psi^u_m$, and one proceeds by a backward  induction. Consequently, for any $u \in I$, we have the following
    \begin{align*}
        \mathbb{E}\Big[F\big(\bar{X}^u_{t_0:t_{M-1}}, \bar{X}^u_{T}, \bar{X}^u_{T}, \cdots\big)\Big] & = \mathbb{E} \Big[\Phi^u_0 \big(\bar{X}^u_{0}; \bar{\boldsymbol{\mu}}_{0:\infty}\big)\Big] \\
        & \leq \mathbb{E} \Big[\Phi^u_0 \big(\bar{Y}^u_{0}; \bar{\boldsymbol{\mu}}_{0:\infty}\big)\Big] \quad \text{(by Lemma \ref{A:lemma:4}, since $X^u_0 \preceq_{cv} Y^u_0$)} \\
        & \leq \mathbb{E} \Big[\Phi^u_0 \big(\bar{Y}^u_{0}; \bar{\boldsymbol{\nu}}_{0:\infty}\big)\Big] \hspace{0.43cm}  \text{(by Lemma \ref{A:lemma:4} and Proposition \ref{A:prop:2})} \\
        & \leq \mathbb{E} \Big[\Psi^u_0 \big(\bar{Y}^u_{0}; \bar{\boldsymbol{\nu}}_{0:\infty}\big)\Big] \hspace{0.41cm} \text{(by \eqref{A:eqn:10})} \\
        & = \mathbb{E}\Big[F\big(\bar{Y}^u_{t_0:t_{M-1}}, \bar{Y}^u_{T}, \bar{Y}^u_{T}, \cdots\big)\Big].
    \end{align*}
In conclusion, the multi-marginal comparison follows by combining the backward comparison of the transition operators with the forward marginal convex order established in Proposition \ref{A:prop:2}.
\end{proof}

\subsection{Functional convex order}
\subsubsection{On the dependence of trajectory}
Recall the notation $t_m := m \cdot h$. Firstly, we define two interpolators as follows, which extends Definition 5.1 in \cite{LiuPages2023} from $T$ to $\infty$.

\begin{definition}
    \textup{(i)} We define the piecewise affine interpolator $i_h: (\mathbb{R}^d)^\infty \rightarrow \mathcal{C}^d_\infty$, i.e. 
    $$
    i_h(x_{0:\infty}) (t) = \frac{1}{h} \big[(t_{m+1} - t)x_m+(t-t_m)x_{m+1}\big], \quad \forall m \in \mathbb{N}, \; \forall t \in [t_m, t_{m+1}].
    $$
    \textup{(ii)} We define the functional interpolator $I_h$ by
    $$
    \mathcal{C}^d_\infty \ni \alpha := \{\alpha_t\}_{t \geq 0} \longmapsto I_h(\alpha) := i_h \big(\alpha_{t_0 :t_\infty}) \in \mathcal{C}^d_\infty.
    $$
\end{definition}

Before stating the main result of this section, i.e. Theorem \ref{A:thm:1}, we first introduce the following result, which can be seen as a variant of Lemma 5.1 in \cite{LiuPages2023} or Lemma 2.2 in \cite{Pages2014}.

\begin{lemma} 
    Under Assumption \ref{A:assump:1}, $I_h(\bar{X}_{T \wedge \cdot}^{u,h})$ weakly converges to $X^u_{T \wedge \cdot}$ as $h \rightarrow 0$ for the $\|\cdot\|_K$-norm topology.
    \label{A:lemma:5}
\end{lemma}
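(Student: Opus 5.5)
We will prove the stronger statement that $\mathbb{E}\big[\|I_h(\bar{X}^{u,h}_{T\wedge\cdot}) - X^u_{T\wedge\cdot}\|_K^2\big] \to 0$ as $h\to 0$, for every fixed $u\in I$. Convergence in $L^2(\Omega;\|\cdot\|_K)$ implies convergence in probability for the $\|\cdot\|_K$-norm, hence weak convergence of the laws on $(L^2(0,\infty;\mathbb{R}^d),\|\cdot\|_K)$. The key step is the decomposition
\begin{equation*}
I_h(\bar{X}^{u,h}_{T\wedge\cdot}) - X^u_{T\wedge\cdot} = \big(I_h(\bar{X}^{u,h}_{T\wedge\cdot}) - \bar{X}^{u,h}_{T\wedge\cdot}\big) + \big(\bar{X}^{u,h}_{T\wedge\cdot} - X^u_{T\wedge\cdot}\big),
\end{equation*}
after which the two terms are controlled separately in $\mathbb{E}\|\cdot\|_K^2$.

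\emph{Second term.} Write $\mathbb{E}\|\bar{X}^{u,h}_{T\wedge\cdot} - X^u_{T\wedge\cdot}\|_K^2 = \int_0^T \mathrm{e}^{-Kt}\mathbb{E}|\bar{X}^{u,h}_t - X^u_t|^2\mathrm{d}t + \frac{\mathrm{e}^{-KT}}{K}\mathbb{E}|\bar{X}^{u,h}_T - X^u_T|^2$. The integral is bounded by $\gamma h^{1\wedge 2\rho}$ directly by Proposition \ref{A:prop:1} (ii). The frozen tail requires a pointwise-in-time estimate $\sup_{t\le T}\mathbb{E}|\bar{X}^{u,h}_t - X^u_t|^2 \to 0$, which we obtain by rerunning the Itô/Gronwall argument behind Proposition \ref{A:prop:1} (ii) on $[0,T]$. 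The ingredients are Lipschitz continuity in $x$ and in $\mathcal{WOP}_2$, the bound \eqref{A:eqn:22} giving $\mathcal{WOP}_2^2([G\boldsymbol{\mu}_t]^u,[G\bar{\boldsymbol{\mu}}^h_{t}]^u)\le \sup_v\mathbb{E}|X^v_t-\bar X^{v,h}_t|^2$ (the supremum in $u$ is what allows closing the Gronwall loop), Hölder continuity in time, and the increment bound $\kappa h$ from Proposition \ref{A:prop:1} (i). Since $T$ is finite, dissipativity is not even needed for this step. Alternatively, one can avoid the pointwise estimate by noting that $t\mapsto \mathbb{E}|\bar X^{u,h}_t-X^u_t|^2$ is Lipschitz-type in $t$ near $T$, with modulus controlled by $\kappa$ and the moment bounds; combined with the integral bound over $[T-\delta,T]$, this gives smallness at $T$.

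\emph{First term.} On $[t_m,t_{m+1}]\subset[0,T]$, $I_h(\bar{X}^{u,h})(t)$ is a convex combination of $\bar X^{u,h}_{t_m}$ and $\bar X^{u,h}_{t_{m+1}}$. Hence
\begin{equation*}
|I_h(\bar{X}^{u,h})(t)-\bar X^{u,h}_t|^2\le 2|\bar X^{u,h}_{t_m}-\bar X^{u,h}_t|^2+2|\bar X^{u,h}_{t_{m+1}}-\bar X^{u,h}_t|^2,
\end{equation*}
and each expectation is $O(h)$ by the increment estimate in Proposition \ref{A:prop:1} (i), applied also across the step to $t_{m+1}$. For $t\ge T$, the interpolation of the stopped sequence and the stopped Euler process both equal $\bar X^{u,h}_T$, except on the single cell containing $T$ when $T/h\notin\mathbb{N}$, which is handled by the same increment bound. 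Integrating against $\mathrm{e}^{-Kt}$ yields $O(h)$.

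The main obstacle is the behaviour at the endpoint: the functional interpolator samples the grid $t_m$, while $T$ may fall off-grid, and the weighted norm places mass $\mathrm{e}^{-KT}/K$ on the frozen value at $T$. We must therefore verify carefully that Proposition \ref{A:prop:1} yields pointwise convergence at $T$, not merely $L^2_K$ convergence. We plan to extract this from the Gronwall step in the proof of Proposition \ref{A:prop:1} (ii), which naturally produces $\sup_{t\le T}\sup_u \mathbb{E}|X^u_t-\bar X^{u,h}_t|^2\le C_T h^{1\wedge 2\rho}$.
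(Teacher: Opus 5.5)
Your proof is correct and follows essentially the same route as the paper: you split the error into the interpolation part, bounded by the one-step increment estimate (including the cell containing $T$), and the Euler part, which yields $\mathbb{E}\|I_h(\bar{X}^{u,h}_{T\wedge\cdot})-X^u_{T\wedge\cdot}\|_K^2\to 0$. The endpoint issue you flag needs no new Gronwall argument, because Lemma \ref{A:lemma:9} (ii) already provides the pointwise bound $\sup_{t\ge 0}\sup_{u}\mathbb{E}|X^u_t-\bar{X}^{u,h}_t|^2\le\widehat{\gamma}h^{1\wedge 2\rho}$, so the paper bounds the whole second term, frozen tail included, by $\widehat{\gamma}h^{1\wedge 2\rho}/K$.
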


\begin{proof}
    The stated result follows immediately from
    \begin{align*}
        & \mathbb{E} \Big[\big\|I_h(\bar{X}_{T \wedge \cdot}^{u,h})-X_{T \wedge \cdot}^u\big\|^2_K\Big]  = \mathbb{E} \bigg[\int_0^\infty \mathrm{e}^{-Kt} \Big|I_h(\bar{X}^{u,h}_{T \wedge \cdot})(t)-X^u_{t \wedge T}\Big|^2\mathrm{d}t\bigg] \\
        & \quad \leq 2 \mathbb{E} \bigg[\int_0^\infty \mathrm{e}^{-Kt} \Big|I_h(\bar{X}^{u,h}_{T \wedge \cdot})(t)-\bar{X}^{u,h}_{t \wedge T}\Big|^2\mathrm{d}t\bigg] + 2 \mathbb{E} \bigg[\int_0^\infty \mathrm{e}^{-Kt} \Big|\bar{X}^{u,h}_{t \wedge T} -X^u_{t \wedge T}\Big|^2\mathrm{d}t\bigg] \\
        & \quad \leq 2 \mathbb{E} \left[\int_0^{[T]^h} \mathrm{e}^{-Kt} \left(\big|\bar{X}^{u,h}_{[t]^h}-\bar{X}^{u,h}_t\big|^2 + \big|\bar{X}^{u,h}_{[t]^h+h}-\bar{X}^{u,h}_t\big|^2\right)\mathrm{d}t\right] \\
        & \qquad + 2 \mathbb{E} \left[\int_{[T]^h}^T \mathrm{e}^{-Kt} \left(\big|\bar{X}^{u,h}_{[T]^h}-\bar{X}^{u,h}_t\big|^2 + \big|\bar{X}^{u,h}_{T}-\bar{X}^{u,h}_t\big|^2\right)\mathrm{d}t\right] \\
        & \qquad +  2 \mathbb{E} \left[\int_T^{[T]^h + h} \mathrm{e}^{-Kt} \big|\bar{X}^{u,h}_{[T]^h}-\bar{X}^{u,h}_T\big|^2 \mathrm{d}t\right] + \frac{2 \widehat{\gamma}}{K} h^{1 \wedge 2\rho} \\
        & \quad \leq \frac{2 \widehat{\kappa}h}{K} \Big(2+ 3\mathrm{e}^{-K[T]^h}-4\mathrm{e}^{-KT}-\mathrm{e}^{-K[T]^h-Kh}\Big) + \frac{2\widehat{\gamma}h^{1 \wedge 2\rho}}{K} \ \rightarrow 0, 
    \end{align*}
    as $h \rightarrow 0$, where we used Jensen's inequality, the definition of the interpolator $I_h$ as well as Lemma \ref{A:lemma:9}.
\end{proof}

\begin{theorem}
    Suppose that Assumptions \ref{A:assump:1} and \ref{A:assump:2} hold. For every $u \in I$, let $X^u := \{X^u_t\}_{t \geq 0}$, $Y^u := \{Y^u_t\}_{t \geq 0}$ denote the unique solutions of systems \eqref{A:eqn:1} and \eqref{A:eqn:2}, and for every $t \geq 0$, let $\mu^u_t$, $\nu^u_t$ denote the probability distributions of $X^u_t$ and $Y^u_t$. Then, we have
    
    \textup{(a)} \textup{Marginal convex order:} $\mu^u_t \preceq_{cv} \nu^u_t$, $\forall t \geq 0, \; \forall u \in I$.

    \vspace{1mm}
    
    \textup{(b)} \textup{Functional convex order:} for any convex function $F:  (L^2(0, \infty;\mathbb{R}^d), \|\cdot\|_K) \rightarrow \mathbb{R}$ with quadratic growth, one has 
    $$
    \mathbb{E} \Big[F\big(X^u\big)\Big] \leq \mathbb{E} \Big[F\big(Y^u\big)\Big], \quad \forall u \in I.
    $$
    \label{A:thm:1}
\end{theorem}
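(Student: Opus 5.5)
We pass to the limit $h\to0$ in the discrete comparisons (Proposition \ref{A:prop:2} and Theorem \ref{A:prop:3}), using the convergence estimates of Proposition \ref{A:prop:1} and Lemma \ref{A:lemma:5}.

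\textbf{Step 1: part (a).} Fix $t\ge 0$ and $u\in I$, and let $\varphi$ be convex with linear growth, as allowed by Lemma \ref{A:lemma:2}. Choose $h$ of the form $h=t/n$, so that $t=t_n$ is a grid point. Proposition \ref{A:prop:2} gives
\[
\mathbb{E}[\varphi(\bar X^{u,h}_t)]\le \mathbb{E}[\varphi(\bar Y^{u,h}_t)].
\]
To let $n\to\infty$ we need the pointwise estimate $\mathbb{E}|X^u_t-\bar X^{u,h}_t|^2\to0$ at the fixed time $t$, not only the weighted $\|\cdot\|_K$ bound.

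\begin{itemize}
\item The pointwise estimate follows from the standard finite-horizon Euler argument: Gronwall on $[0,t]$, using the Lipschitz bounds of Assumption \ref{A:assump:1}, the inequality \eqref{A:eqn:22} to control the $\mathcal{WOP}_2$ term by $\sup_v\mathbb{E}|X^v_s-\bar X^{v,h}_s|^2$, and the increment bound of Proposition \ref{A:prop:1}(i). Taking the supremum over $u$ closes the Gronwall loop.
\item Alternatively, the weighted $L^2$ bound of Proposition \ref{A:prop:1}(ii) gives convergence for a.e.\ $s$ along a subsequence. Continuity of $s\mapsto\mu^u_s,\nu^u_s$ in $\mathcal W_2$, combined with closedness of the convex order under $\mathcal W_1$-limits, then extends the order to every $t$.
\end{itemize}

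Since $\varphi$ is Lipschitz up to linear growth, we get $|\mathbb{E}\varphi(X)-\mathbb{E}\varphi(\bar X)|\le C\,\mathbb{E}|X-\bar X|\to0$, and (a) follows.

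\textbf{Step 2: functional order on $[0,T]$ frozen after $T$.} Fix $T$ and a convex $F$ on $(L^2,\|\cdot\|_K)$ with quadratic growth. Define $F_h(x_{0:\infty}):=F(i_h(x_{0:\infty}))$. Since $i_h$ is linear, $F_h$ is convex. Its growth is of the form \eqref{A:eqn:54}, because $\|i_h(x)\|_K^2$ is bounded by a constant multiple of the discrete norm $\sum_m e^{-Kt_m}\tfrac{|x_m|^2+|x_{m+1}|^2}{2}h$; this uses $|i_h(x)(t)|^2\le |x_m|^2+|x_{m+1}|^2$ on each cell, and the constant $e^{Kh}$ is harmless.

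Theorem \ref{A:prop:3} applied to $F_h$ gives $\mathbb{E}F(I_h(\bar X^{u,h}_{T\wedge\cdot}))\le \mathbb{E}F(I_h(\bar Y^{u,h}_{T\wedge\cdot}))$. When $T/h\notin\mathbb N$, the last node is $T$, and $i_h$ must be understood with that final shortened step; I will check that this matches the interpolation used in Lemma \ref{A:lemma:5}.

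To pass to the limit, note that Lemma \ref{A:lemma:5} actually proves $L^2(\Omega;\|\cdot\|_K)$ convergence. A convex function with quadratic growth on a Banach space is continuous and locally Lipschitz with $|F(x)-F(y)|\le C(1+\|x\|_K+\|y\|_K)\|x-y\|_K$; this is the standard consequence of convexity plus local boundedness. With the uniform second moments of Proposition \ref{A:prop:1}(i) and Cauchy--Schwarz, this yields $\mathbb{E}F(X^u_{T\wedge\cdot})\le\mathbb{E}F(Y^u_{T\wedge\cdot})$.

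\textbf{Step 3: $T\to\infty$.} We have
\[
\mathbb{E}\|X^u_{T\wedge\cdot}-X^u\|_K^2=\int_T^\infty e^{-Ks}\,\mathbb{E}|X^u_T-X^u_s|^2\,ds\le \frac{C}{K}\,e^{-KT}\to0,
\]
because the uniform-in-time second moment bound $\sup_{s,u}\mathbb{E}|X^u_s|^2<\infty$ follows from dissipativity (it is the estimate behind $\Gamma$ mapping into $L^{2,c}$ and behind Proposition \ref{A:prop:1}(i)). The same locally Lipschitz estimate for $F$ then gives (b).

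\textbf{Main obstacle.} The hard step is Step 2: justifying the growth and continuity of $F$ on $\|\cdot\|_K$ and the matching of the discrete norm, together with handling the non-grid endpoint $T$. In Step 1, the pointwise-in-time convergence is not literally stated earlier and must be derived or bypassed by the continuity argument.
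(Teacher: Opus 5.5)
Your proposal is correct and follows the paper's route. The paper also proceeds in three stages: discrete marginal and multi-marginal convex order for the Euler scheme (Proposition \ref{A:prop:2}, and Theorem \ref{A:prop:3} applied to $F_h=F\circ i_h$); passage $h\to0$ via Lemma \ref{A:lemma:5}; and $T\to\infty$ via the uniform-in-time moment bound.

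The remaining differences are minor:
\begin{itemize}
\item The pointwise estimate you propose to derive by Gronwall is already Lemma \ref{A:lemma:9}(ii), uniform in $t$ and $u$.
\item The paper keeps $i_h$ on the uniform grid, with $\bar X_T$ placed at $t_M$, so no shortened interpolation step is needed.
\item The paper passes to the limit via continuity plus uniform integrability. Your local Lipschitz bound $|F(x)-F(y)|\le C(1+\|x\|_K+\|y\|_K)\|x-y\|_K$, combined with Cauchy--Schwarz, works equally well.
\end{itemize}
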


\begin{proof}
    \textup{(a)} Let $\varphi \in \mathbb{C}_{cv}(\mathbb{R}^d, \mathbb{R})$ with linear growth. Proposition \ref{A:prop:2} implies that $\bar{\mu}^{u,h}_{[t]^h} \preceq_{cv} \bar{\nu}^{u,h}_{[t]^h}$ for every $t \geq 0$, where we used the notation $[t]^h := \lfloor \frac{t}{h}\rfloor \cdot h$. Consequently, we have
    \begin{equation}
        \mathbb{E} \Big[\varphi \big(\bar{X}^{u,h}_{[t]^h}\big)\Big] \leq \mathbb{E} \Big[\varphi\big(\bar{Y}^{u,h}_{[t]^h}\big)\Big], \quad \forall \varphi \in \mathbb{C}_{cv}(\mathbb{R}^d, \mathbb{R}) \text{ with linear growth}.
        \label{A:eqn:57}
    \end{equation}
    It follows from Lemma \ref{A:lemma:9} that $\bar{X}^{u,h}_{[t]^h}$ weakly converges to $X^u_t$ as $h \rightarrow 0$, and $\big\{\bar{X}^{u,h}_{[t]^h}\big\}_h$ is uniformly integrable by the Vitali convergence theorem. Moreover, notice that
    \begin{equation*}
        \Big|\varphi \big(\bar{X}^{u,h}_{[t]^h}\big)\Big| \leq C\Big(1 + \big|\bar{X}^{u,h}_{[t]^h}\big|\Big),
    \end{equation*}
    which further implies the uniform integrability of $\big\{\varphi \big(\bar{X}^{u,h}_{[t]^h}\big)\big\}$. Thus, we have $\mathbb{E} [\varphi (\bar{X}^{u,h}_{[t]^h})] \rightarrow \mathbb{E}[\varphi(X^u_t)]$, and similarly, $\mathbb{E} [\varphi (\bar{Y}^{u,h}_{[t]^h})] \rightarrow \mathbb{E}[\varphi(Y^u_t)]$. One completes the proof of the first part by letting $h \rightarrow 0$ on both sides of \eqref{A:eqn:57}, and by using Lemma \ref{A:lemma:2}.

    \textup{(b)} Firstly, it follows from Proposition \ref{A:prop:1} (i) that $F(X_{T \wedge \cdot}^u)$ and $F(Y_{T \wedge \cdot}^u)$ are in $L^1(\mathbb{P})$ since $F$ has a quadratic growth. We define a function $F_h$ as follows:
    $$
    (\mathbb{R}^d)^\infty \ni x := (x_0, \cdots, x_\infty) \longmapsto F_h(x) := F\big(i_h(x)\big) \in \mathbb{R}.
    $$
    The function $F_h$ is obviously convex since $i_h$ is a linear interpolator, and has a quadratic growth on $(\mathbb{R}^d)^\infty$ in the sense of \eqref{A:eqn:54}. To see the latter, 
    \begin{align*}
        \Big|  F_h(x) \Big| & = \Big| F\big(i_h(x)\big) \Big| \leq C \bigg(1 + \int_0^\infty \mathrm{e}^{-Kt} \big|i_h(x)(t)\big|^2\mathrm{d}t\bigg) \\
        & \leq  C \bigg[1 + \sum_{m=0}^\infty \frac{1}{h}\int_{t_m}^{t_{m+1}} \mathrm{e}^{-Kt}  \Big((t_{m+1} - t)|x_m|^2+(t-t_m)|x_{m+1}|^2\Big)\mathrm{d}t\bigg] \\
        & \leq C \bigg[1 + \sum_{m=0}^\infty \mathrm{e}^{-Kt_m} \frac{|x_m|^2 + |x_{m+1}|^2}{2} h \bigg] =: C \Big(1 + \|x\|_K^2\Big). 
    \end{align*}
    Furthermore, recall that $M := \frac{T}{h}$ if $\frac{T}{h} \in \mathbb{N}$, otherwise $M := \lfloor \frac{T}{h}\rfloor +1$. Then, 
    $$
    F_h \big(\bar{X}^u_{t_0:t_{M-1}}, \bar{X}^u_{T}, \bar{X}^u_{T}, \cdots\big) = F\big(i_h \big(\bar{X}^u_{t_0:t_{M-1}}, \bar{X}^u_{T}, \bar{X}^u_{T}, \cdots\big)\big) = F\big(I_h \big(\bar{X}_{T \wedge \cdot}^u\big)\big).
    $$
    It then follows from Theorem \ref{A:prop:3} that
    \begin{align}
        \mathbb{E}\Big[F\big(I_h \big(\bar{X}_{T \wedge \cdot}^u\big)\big)\Big] & = \mathbb{E}\Big[F\big(i_h \big(\bar{X}^u_{t_0:t_{M-1}}, \bar{X}^u_{T}, \bar{X}^u_{T}, \cdots\big)\big)\Big]  \nonumber \\
        &= \mathbb{E}\Big[F_h\big(\bar{X}^u_{t_0:t_{M-1}}, \bar{X}^u_{T}, \bar{X}^u_{T}, \cdots\big)\Big] \nonumber \\
        & \leq \mathbb{E} \Big[F_h\big(\bar{Y}^u_{t_0:t_{M-1}}, \bar{Y}^u_{T}, \bar{Y}^u_{T}, \cdots\big)\Big] \nonumber \\
        & = \mathbb{E}\Big[F\big(i_h \big(\bar{Y}^u_{t_0:t_{M-1}}, \bar{Y}^u_{T}, \bar{Y}^u_{T}, \cdots\big)\big)\Big] = \mathbb{E} \Big[F\big(I_h \big(\bar{Y}_{T \wedge \cdot}^u\big)\big)\Big]. \label{A:eqn:11}
    \end{align}
    
    The function $F$ is $\|\cdot\|_K$-continuous as it is convex with $\|\cdot\|_K$-quadratic growth, see \cite[Lemma 2.1.1]{Lucchetti2026}. Thanks to Lemma \ref{A:lemma:5}, we have that $I_h(\bar{X}_{T \wedge \cdot}^u)$ weakly converges to $X^u_{T \wedge \cdot}$ for the $\|\cdot\|_K$-norm topology, and moreover, $F(I_h(\bar{X}_{T \wedge \cdot}^u))$ weakly converges to $F(X_{T \wedge \cdot}^u)$ since $F$ is continuous under the $\|\cdot\|_K$-norm topology. Similarly, $F(I_h(\bar{Y}_{T \wedge \cdot}^u))$ weakly converges to $F(Y_{T \wedge \cdot}^u)$. As $F$ has a quadratic growth, we have for some positive constant $C$,
    $$
    \big|F\big(I_h \big(\bar{X}_{T \wedge \cdot}^u\big)\big)\big| \leq C\Big(1+\big\|I_h\big(\bar{X}_{T \wedge \cdot}^u\big)\big\|^2_K\Big).
    $$
    Thanks to Lemma \ref{A:lemma:5}, together with the Vitali convergence theorem, we have the uniform integrability of  $\{\|I_h(\bar{X}_{T \wedge \cdot}^u)\|^2_K\}_h$. Therefore, $F(I_h (\bar{X}_{T \wedge \cdot}^u))$ is also uniformly integrable. Consequently, $\mathbb{E} [F(I_h(\bar{X}_{T \wedge \cdot}^u))] \rightarrow \mathbb{E} [F(X_{T \wedge \cdot}^u)]$. Similarly, $\mathbb{E} [F(I_h(\bar{Y}_{T \wedge \cdot}^u))] \rightarrow \mathbb{E} [F(Y_{T \wedge \cdot}^u)]$. Thus, by letting $h \rightarrow 0$ on both sides of the inequality \eqref{A:eqn:11}, we have
    \begin{equation}
         \mathbb{E} \Big[F\big(X_{T \wedge \cdot}^u\big)\Big] \leq \mathbb{E} \Big[F\big(Y_{T \wedge \cdot}^u\big)\Big], \quad \forall T \geq 0.
         \label{A:eqn:58}
    \end{equation}
    Notice from Lemma \ref{A:lemma:9} (i) that
    \begin{align*}
        \mathbb{E} \Big[\big\|X^u_{T \wedge \cdot} - X^u\big\|^2_K\Big] = \mathbb{E}\bigg[\int_T^\infty \mathrm{e}^{-Kt} \big|X^u_T - X^u_t\big|^2\mathrm{d}t\bigg] \leq \widehat{C} \frac{\mathrm{e}^{-KT}}{K} \longrightarrow 0.
    \end{align*}
    Thus, we conclude the proof by letting $T \rightarrow \infty$ on both sides of \eqref{A:eqn:58} and using the fact that $\{\|X_{T \wedge \cdot}^u\|^2_K: T \geq 0\}$ is uniformly integrable.
\end{proof}

\subsubsection{Extended functional convex order}
We now extend the functional convex order to functionals depending jointly on the individual trajectory and the flow of marginal distributions. The main result of this section is the following corollary.
\begin{corollary}
    Assume that Assumptions \ref{A:assump:1} and \ref{A:assump:2} are in force. For every $u \in I$, let $X^u := \{X^u_t\}_{t \geq 0}$, $Y^u := \{Y^u_t\}_{t \geq 0}$ denote the unique solutions of systems \eqref{A:eqn:1} and \eqref{A:eqn:2}, and for every $t \in [0, \infty)$, let $\mu^u_t$, $\nu^u_t$ denote the probability distributions of $X^u_t$ and $Y^u_t$. In addition, let $\boldsymbol{\mu}_t := \{\mu^u_t\}_{u \in I}$ and $\boldsymbol{\nu}_t := \{\nu^u_t\}_{u \in I}$. Then, for any function $G$: $L^{2}(0,\infty;\mathbb{R}^d) \times  L^{2,c}(0, \infty;\mathcal{H}(\mathbb{R}^d))\rightarrow \mathbb{R}$ satisfying the following conditions: 
    
    \textup{(i)} $G$ is convex in $\alpha$ with quadratic growth in the sense that
    $$
    \big|G\big(\alpha; \{\boldsymbol{\eta}_t\}_{t \geq 0}\big)\big| \leq C \bigg[1+\int_0^\infty \mathrm{e}^{-Kt} |\alpha_t|^2\mathrm{d}t + \sup_{t \geq 0} \boldsymbol{d}_\mathcal{H}^2 (\boldsymbol{\eta}_t, \boldsymbol{\delta}_0)\bigg],
    $$
    
    \textup{(ii)} G is continuous in $\{\boldsymbol{\eta}_t\}_{t \geq 0}$ with respect to the  metric $\boldsymbol{d}_\mathcal{C}$ defined in \eqref{A:eqn:12}, and is nondecreasing in $\{\boldsymbol{\eta}_t\}_{t \geq 0}$ with respect to the convex order in the sense that  for any $\alpha$ and $\boldsymbol{\eta}, \; \tilde{\boldsymbol{\eta}}$ such that $\eta^u_t \preceq_{cv} \tilde{\eta}^u_t$, $\forall u \in I, \; \forall t \geq 0$,
    $$
    G\big(\alpha; \{\boldsymbol{\eta}_t\}_{t \geq 0}\big) \leq G\big(\alpha; \{\tilde{\boldsymbol{\eta}}_t\}_{t \geq 0}\big),
    $$
    one has
    $$
    \mathbb{E} \Big[G\big(X^u; \{\boldsymbol{\mu}_t\}_{t \geq 0}\big)\Big] \leq \mathbb{E} \Big[G\big(Y^u; \{\boldsymbol{\nu}_t\}_{t \geq 0}\big)\Big], \quad \forall u \in I.
    $$
    \label{A:corollary:1}
\end{corollary}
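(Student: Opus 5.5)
The plan is to split the comparison into two inequalities: first freeze the measure argument, then move it.
\begin{align*}
\mathbb{E}\big[G(X^u;\{\boldsymbol{\mu}_t\}_{t\ge0})\big] &\le \mathbb{E}\big[G(Y^u;\{\boldsymbol{\mu}_t\}_{t\ge0})\big]\\
&\le \mathbb{E}\big[G(Y^u;\{\boldsymbol{\nu}_t\}_{t\ge0})\big].
\end{align*}

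The second inequality is immediate. By Theorem \ref{A:thm:1}(a), $\mu^u_t\preceq_{cv}\nu^u_t$ for all $u\in I$ and $t\ge0$. Condition (ii) on $G$ then gives the pointwise bound $G(\alpha;\{\boldsymbol{\mu}_t\})\le G(\alpha;\{\boldsymbol{\nu}_t\})$ for every $\alpha$. We apply it with $\alpha=Y^u(\omega)$ and take expectations. Integrability of both sides follows from the quadratic growth in (i), the bound $\sup_u\mathbb{E}\|Y^u\|_K^2<\infty$ from Proposition \ref{A:prop:1}(i), and the fact that $\boldsymbol{\mu},\boldsymbol{\nu}\in L^{2,c}(0,\infty;\mathcal{H}(\mathbb{R}^d))$, which makes $\sup_t\boldsymbol{d}_\mathcal{H}^2(\boldsymbol{\mu}_t,\boldsymbol{\delta}_0)$ finite.

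For the first inequality, we fix the deterministic flow $\boldsymbol{\eta}:=\{\boldsymbol{\mu}_t\}_{t\ge0}$ and set $F(\alpha):=G(\alpha;\boldsymbol{\eta})$ on $L^2(0,\infty;\mathbb{R}^d)$. This $F$ is convex by (i). It also has quadratic growth $|F(\alpha)|\le C'(1+\|\alpha\|_K^2)$, with $C'$ absorbing the finite constant $C\sup_t\boldsymbol{d}^2_\mathcal{H}(\boldsymbol{\mu}_t,\boldsymbol{\delta}_0)$. Theorem \ref{A:thm:1}(b) then applies verbatim and yields $\mathbb{E}[F(X^u)]\le\mathbb{E}[F(Y^u)]$, which is exactly the first inequality.

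The only point needing care is whether Theorem \ref{A:thm:1}(b) may be used for a fixed functional $F$ built from the flow of $X$ but then evaluated along $Y$. It may: the measure argument is deterministic and does not interact with the Euler/backward-induction machinery, so the requirements are only convexity and quadratic growth. Continuity of $F$ in the $\|\cdot\|_K$ topology is then automatic, as in the proof of Theorem \ref{A:thm:1}(b). The continuity of $G$ in $\boldsymbol{\eta}$ with respect to $\boldsymbol{d}_\mathcal{C}$ is therefore not needed in this route; it would only be needed for an Euler-scheme approximation in which the measure flow is also discretized. If one prefers that route, the uniform-in-time estimates of Proposition \ref{A:prop:1} and Lemma \ref{A:lemma:9} give $\boldsymbol{d}_\mathcal{C}$-convergence of the Euler marginal flows. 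Combined with uniform integrability, this passes the limit in the discrete extended inequality obtained from Theorem \ref{A:prop:3} and Lemma \ref{A:lemma:4}(ii).
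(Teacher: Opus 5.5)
Your proof is correct, but it follows a different route from the paper's. The paper never freezes the measure argument. It re-runs the backward induction with a terminal functional $\tilde G(x_{0:\infty};\boldsymbol{\eta}_{0:\infty})$ that also depends on the measure sequence (Proposition \ref{A:prop:4}), and applies this to $G_h(x;\boldsymbol{\eta}):=G(i_h(x);i_h(\boldsymbol{\eta}))$, with the Euler marginal flows interpolated by mixtures. It then proves in \eqref{A:eqn:17} that these interpolated flows converge to $\{\boldsymbol{\mu}_t\}_{t\ge0}$ in $\boldsymbol{d}_\mathcal{C}$. Finally it lets $h\to0$ and $T\to\infty$ with both arguments of $G$ moving at once; this is where the $\boldsymbol{d}_\mathcal{C}$-continuity of $G$ is used. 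Your intermediate term $\mathbb{E}[G(Y^u;\{\boldsymbol{\mu}_t\}_{t\ge0})]$ instead reduces the corollary to Theorem \ref{A:thm:1}(a) and (b), used as black boxes. With the deterministic flow frozen, $G(\cdot;\{\boldsymbol{\mu}_t\})$ is a convex functional with quadratic growth, the extra constant being finite by Lemma \ref{A:lemma:9}(i). The change of flow is then a pointwise monotonicity step justified by the marginal convex order. This gives a much shorter argument and shows that the continuity hypothesis in (ii) is superfluous. It also sidesteps the joint limit $G(I_h(\bar{X}^{u}_{T\wedge\cdot});\{\tilde{\bar{\boldsymbol{\mu}}}_t\})\to G(X^u_{T\wedge\cdot};\{\boldsymbol{\mu}_t\})$. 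That limit needs some uniformity in $\boldsymbol{\eta}$, for instance equi-Lipschitz bounds on bounded sets for the convex maps $G(\cdot;\boldsymbol{\eta})$, which the uniform quadratic growth does supply but the paper does not spell out. In exchange, the paper's route yields the extended comparison already at the level of the Euler scheme with its own marginal flows (Proposition \ref{A:prop:4}), which is of independent interest and parallels Liu--Pag\`es. Your final paragraph sketching that alternative is not needed for your proof.
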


The proof of Corollary \ref{A:corollary:1} relies on the following proposition. Recall that  $M := \frac{T}{h}$ if $\frac{T}{h} \in \mathbb{N}$, otherwise $M := \lfloor \frac{T}{h}\rfloor +1$, and $t_m := m \cdot h$.

\begin{proposition}
    Let $\{\bar{X}^u_{0:\infty}\}_{u \in I}, \; \{\bar{Y}^u_{0:\infty}\}_{u \in I}, \; \bar{\boldsymbol{\mu}}_{0:\infty}, \; \bar{\boldsymbol{\nu}}_{0:\infty}$ be respectively random variables and probability distribution ensembles defined by \eqref{A:eqn:3} and \eqref{A:eqn:4}. Under Assumptions \ref{A:assump:1} and \ref{A:assump:2}, for any function $\tilde{G}$:
    $(\mathbb{R}^d)^\infty \times \mathcal{H}(\mathbb{R}^d)^{\infty} \rightarrow \mathbb{R}$ satisfying the following conditions: 
    
    \textup{(i)} $\tilde{G}$ is convex in $x_{0:\infty}$ with quadratic growth in the sense that
    $$
    \big|\tilde{G}\big(x_{0:\infty}; \boldsymbol{\eta}_{0:\infty}\big)\big| \leq C \Bigg[1+\sum_{m=0}^\infty \mathrm{e}^{-K t_m} \frac{|x_m|^2 + |x_{m+1}|^2}{2} h + \sup_{m \geq 0} \boldsymbol{d}_\mathcal{H}^2 \big(\boldsymbol{\eta}_m, \boldsymbol{\delta}_0\big)\Bigg],
    $$
    
    \textup{(ii)} $\tilde{G}$ is nondecreasing in $\boldsymbol{\eta}_{0:\infty}$ with respect to the convex order in the sense that for any $x_{0:\infty}$ and $\boldsymbol{\eta}_{0:\infty}, \; \tilde{\boldsymbol{\eta}}_{0:\infty}$ such that $\eta^u_i \preceq_{cv} \tilde{\eta}^u_i$, $\forall i \in \mathbb{N}, \; \forall u \in I$, 
    $$
    \tilde{G} \big(x_{0:\infty}; \boldsymbol{\eta}_{0:\infty}\big) \leq \tilde{G} \big(x_{0:\infty}; \tilde{\boldsymbol{\eta}}_{0:\infty}\big),
    $$
    one has
    $$
    \mathbb{E} \Big[\tilde{G}\big(\bar{X}^u_{t_0: t_{M-1}}, \bar{X}^u_{ T},\bar{X}^u_{ T}, \cdots; \bar{\boldsymbol{\mu}}_{0:\infty}\big)\Big] \leq \mathbb{E} \Big[\tilde{G}\big(\bar{Y}^u_{t_0:t_{M-1}},\bar{Y}^u_{ T}, \bar{Y}^u_{ T}, \cdots; \bar{\boldsymbol{\nu}}_{0:\infty}\big)\Big], \quad \forall u \in I.
    $$
    \label{A:prop:4}
\end{proposition}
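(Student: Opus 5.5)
The plan is to reduce Proposition \ref{A:prop:4} to the machinery already built for Theorem \ref{A:prop:3}, with the terminal functional $F$ now allowed to depend on the measure argument. I would define backward recursions $\tilde\Phi^u_m$ and $\tilde\Psi^u_m$ exactly as in \eqref{A:eqn:13}--\eqref{A:eqn:14}, but with the terminal condition
\[
\tilde\Phi^u_m\big(x_{0:M-1},x_M,\dots,x_M;\boldsymbol{\mu}_{0:\infty}\big):=\tilde G\big(x_{0:M-1},x_M,x_M,\dots;\boldsymbol{\mu}_{0:\infty}\big),\qquad m\ge M \text{ or } m=\infty,
\]
and the same for $\tilde\Psi^u_m$. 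The transitions use $\sigma^u_m(\cdot,\boldsymbol{\mu}_m)$, respectively $\theta^u_m(\cdot,\boldsymbol{\mu}_m)$, and the modified last step \eqref{A:eqn:67}--\eqref{A:eqn:69} when $T/h\notin\mathbb{N}$. Here $\boldsymbol{\mu}_{0:\infty}$ ranges over $\mathcal{H}(\mathbb{R}^d)^\infty$ with $\sup_m \boldsymbol d_{\mathcal H}(\boldsymbol\mu_m,\boldsymbol\delta_0)<\infty$, so that the growth bound (i) gives finiteness.

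The first step re-proves Lemma \ref{A:lemma:4} for $\tilde\Phi^u_m$. Convexity in $x_{0:m}$ propagates backward verbatim from the convexity of $\tilde G$ in $x$, using Assumption \ref{A:assump:3} (ii) and Lemma \ref{A:lemma:1} (iii). Quadratic growth propagates as before, with the extra additive constant $\sup_m \boldsymbol d^2_{\mathcal H}(\boldsymbol\mu_m,\boldsymbol\delta_0)$. Monotonicity in the measure argument now starts nontrivially at the terminal step, where it is exactly hypothesis (ii) on $\tilde G$. The induction step is then identical to Lemma \ref{A:lemma:4} (ii):
\begin{itemize}
\item first replace $\sigma^u_m(x_m,\boldsymbol\mu_m)$ by $\sigma^u_m(x_m,\boldsymbol\nu_m)$, using Assumption \ref{A:assump:3} (iii), Lemma \ref{A:lemma:1} (iii) and convexity of $\tilde\Phi^u_{m+1}$ in its last variable;
\item then replace $\boldsymbol\mu_{0:\infty}$ by $\boldsymbol\nu_{0:\infty}$ inside $\tilde\Phi^u_{m+1}$, by the induction hypothesis.
\end{itemize}
Likewise, the comparison $\tilde\Phi^u_m\le\tilde\Psi^u_m$ follows by the same backward induction as \eqref{A:eqn:10}, via Assumption \ref{A:assump:3} (iv).

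The second step is the analogue of Lemma \ref{A:lemma:3}, which identifies the expectations:
\[
\mathbb{E}\big[\tilde G(\bar X^u_{t_0:t_{M-1}},\bar X^u_T,\dots;\bar{\boldsymbol\mu}_{0:\infty})\big]=\mathbb{E}\big[\tilde\Phi^u_0(\bar X^u_0;\bar{\boldsymbol\mu}_{0:\infty})\big].
\]
The same identity holds for $Y$ with $\tilde\Psi^u_0$ and $\bar{\boldsymbol\nu}_{0:\infty}$. Since $\bar{\boldsymbol\mu}_{0:\infty}$ is deterministic, this is again a conditional-expectation backward induction. Integrability comes from the growth bound (i), the uniform moment bounds of Lemma \ref{A:lemma:9} (i), and $\sup_m \boldsymbol d_{\mathcal H}(\bar{\boldsymbol\mu}_m,\boldsymbol\delta_0)<\infty$ (uniform-in-time second moments for $h<h_0$).

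Finally, I would chain the inequalities exactly as at the end of the proof of Theorem \ref{A:prop:3}:
\[
\mathbb{E}[\tilde\Phi^u_0(\bar X^u_0;\bar{\boldsymbol\mu})]\le\mathbb{E}[\tilde\Phi^u_0(\bar Y^u_0;\bar{\boldsymbol\mu})]\le\mathbb{E}[\tilde\Phi^u_0(\bar Y^u_0;\bar{\boldsymbol\nu})]\le\mathbb{E}[\tilde\Psi^u_0(\bar Y^u_0;\bar{\boldsymbol\nu})].
\]
The three steps use, in order, convexity of $\tilde\Phi^u_0$ with $X^u_0\preceq_{cv}Y^u_0$; monotonicity together with Proposition \ref{A:prop:2}, which gives $\bar\mu^u_m\preceq_{cv}\bar\nu^u_m$ for all $m$ and $u$; and $\tilde\Phi\le\tilde\Psi$.

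I expect the main obstacle to be the first inequality. $\tilde\Phi^u_0(\cdot;\bar{\boldsymbol\mu})$ is convex but only of quadratic growth, whereas Lemma \ref{A:lemma:2} handles linear growth. I would argue that convex order between $\mathcal{P}_2$ laws extends to convex functions of quadratic growth, by approximating $\varphi$ from below with an increasing sequence of linear-growth convex functions (suprema of finitely many affine minorants) and applying monotone convergence. The same issue is implicitly present in Theorem \ref{A:prop:3}, so I would invoke the same justification.
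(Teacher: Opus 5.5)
Your proposal is correct and follows the paper's proof essentially verbatim. The paper defines the same backward recursions $\widehat{\Phi}^u_m,\widehat{\Psi}^u_m$ with terminal value $\tilde G$ (modifying the last step when $t_M>T$) and says the rest proceeds as in Theorem \ref{A:prop:3}, which is exactly the convexity/monotonicity/comparison induction and the three-step chain you spell out. Your worry about the first inequality is harmless: $\preceq_{cv}$ is defined by testing against all convex functions, and Lemma \ref{A:lemma:2} only reduces the verification to linear-growth ones, so the order applies directly to the quadratic-growth convex function $\tilde\Phi^u_0(\cdot;\bar{\boldsymbol\mu}_{0:\infty})$.
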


\begin{proof}
    Similar to $\{\Phi^u_m\}$ and $\{\Psi^u_m\}$, we define
    \begin{equation*}
    \left\{
    \begin{aligned}
        & \widehat{\Phi}^u_m \big(x_{0:M-1}, \underbrace{x_M, \cdots,x_M}_{\text{\#: $m-M+1$}}; \boldsymbol{\mu}_{0:\infty}\big) := \tilde{G}\big(x_{0:M-1}, \underbrace{x_M, x_M, \cdots}_{\text{\#: $\infty$}}; \boldsymbol{\mu}_{0:\infty}\big), \quad m \geq M \text{ or } m = \infty,\\
        & \widehat{\Phi}^u_m\big(x_{0:m}; \boldsymbol{\mu}_{0:\infty}\big) := \mathbb{E} \Big[\widehat{\Phi}^u_{m+1} \big(x_{0:m}, b_m(x_m) + \sigma^u_m (x_m, \boldsymbol{\mu}_m)Z^u_{m+1}; \boldsymbol{\mu}_{0:\infty}\big)\Big], \quad m =M-1, \\
        & \widehat{\Phi}^u_m \big(x_{0:m}; \boldsymbol{\mu}_{0:\infty}\big) := \Big(Q^u_{m+1} \widehat{\Phi}^u_{m+1} \big(x_{0:m}, \cdot; \boldsymbol{\mu}_{0:\infty}\big)\Big)\big(x_m, \sigma^u_m (x_m, \boldsymbol{\mu}_m)\big), \quad  m < M-1; \\
        & \widehat{\Psi}^u_m \big(x_{0:M-1}, \underbrace{x_M, \cdots, x_M}_{\text{\#: $m-M+1$}}; \boldsymbol{\mu}_{0:\infty}\big) := \tilde{G}\big(x_{0:M-1}, \underbrace{x_M, x_M, \cdots}_{\text{\#: $\infty$}}; \boldsymbol{\mu}_{0:\infty}\big), \quad m \geq M \text{ or } m = \infty, \\
        & \widehat{\Psi}^u_m\big(x_{0:m}; \boldsymbol{\mu}_{0:\infty}\big) := \mathbb{E} \Big[\widehat{\Psi}^u_{m+1} \big(x_{0:m}, b_m(x_m) + \theta^u_m (x_m, \boldsymbol{\mu}_m)Z^u_{m+1}; \boldsymbol{\mu}_{0:\infty}\big)\Big], \quad m =M-1, \\
        & \widehat{\Psi}^u_m \big(x_{0:m}; \boldsymbol{\mu}_{0:\infty}\big) := \Big(Q^u_{m+1} \widehat{\Psi}^u_{m+1} \big(x_{0:m}, \cdot; \boldsymbol{\mu}_{0:\infty}\big)\Big)\big(x_m, \theta^u_m (x_m, \boldsymbol{\mu}_m)\big), \quad  m < M-1.
    \end{aligned}
    \right.
    \end{equation*}
     For the case of $m=M-1$, if $t_M > T$, then one modify $b_m$, $\sigma^u_m$, $\theta^u_m$ and $Z^u_{m+1}$ as in \eqref{A:eqn:67}, \eqref{A:eqn:68} and \eqref{A:eqn:69}. 
    
    The rest of the proof proceeds in a similar manner to that of Theorem \ref{A:prop:3}.
\end{proof}

Next, we extend the definition of $i_h$ to the space $\mathcal{H} (\mathbb{R}^d)^\infty$ as follows: for any $t \in [t_m, t_{m+1}]$,
\begin{align*}
    \mathcal{H} (\mathbb{R}^d)^\infty \ni \boldsymbol{\mu}_{0:\infty} \longmapsto  i_h(\boldsymbol{\mu}_{0:\infty})(t) := \left\{\frac{1}{h} \big[(t_{m+1} - t)\mu^u_m+(t-t_m)\mu^u_{m+1}\big]\right\}_{u \in I} \in \mathcal{H}(\mathbb{R}^d), 
\end{align*}
and similarly, for $I_h$, we define
\begin{equation*}
    \mathcal{C}_\infty (\mathcal{H}(\mathbb{R}^d)) \ni \{\boldsymbol{\mu}_t\}_{t \geq 0} \longmapsto I_h\big(\{\boldsymbol{\mu}_t\}_{t \geq 0}\big) := i_h (\boldsymbol{\mu}_{t_0:t_\infty}) \in \mathcal{C}_\infty (\mathcal{H}(\mathbb{R}^d)).
\end{equation*}
Recall that $\bar{\mu}^u_t := \mathcal{L}(\bar{X}^u_t)$, $\bar{\nu}^u_t := \mathcal{L}(\bar{Y}^u_t)$, for $u \in I$, and $\bar{\boldsymbol{\mu}}_t := \{\bar{\mu}^u_t\}_{u \in I}$, $\bar{\boldsymbol{\nu}}_t := \{\bar{\nu}^u_t\}_{u \in I}$. We define for every $t \geq 0$, $\tilde{\bar{\boldsymbol{\mu}}}_t := \{\tilde{\bar{\mu}}^u_t\}_{u \in I} := I_h\big(\{\bar{\boldsymbol{\mu}}_t\}_{t \geq 0}\big) (t)$, and similarly, $\tilde{\bar{\boldsymbol{\nu}}}_t := \{\tilde{\bar{\nu}}^u_t\}_{u \in I} := I_h\big(\{\bar{\boldsymbol{\nu}}_t\}_{t \geq 0}\big) (t)$.

\begin{proof}[Proof of Corollary \ref{A:corollary:1}]
    We start by proving that
    \begin{align}
        \boldsymbol{d}_\mathcal{C} \big(\{\tilde{\bar{\boldsymbol{\mu}}}_t\}_{t \geq 0}, \{\boldsymbol{\mu}_t\}_{t \geq 0}\big) & = \sup_{t \geq 0} \boldsymbol{d}_\mathcal{H} \big(\tilde{\bar{\boldsymbol{\mu}}}_t, \boldsymbol{\mu}_t\big) = \sup_{t \geq 0} \sup_{u \in I} \mathcal{W}_2\big(\tilde{\bar{\mu}}^u_t, \mu^u_t\big) \nonumber \\
        & \leq \sup_{t \geq 0} \sup_{u \in I} \mathcal{W}_2(\tilde{\bar{\mu}}^u_t, \bar{\mu}^u_t) + \sup_{t \geq 0} \sup_{u \in I} \mathcal{W}_2(\bar{\mu}^u_t, \mu^u_t) \nonumber \\
        & \rightarrow 0 \label{A:eqn:17},
    \end{align}
    where we used the  definitions of $\boldsymbol{d}_\mathcal{C}$, $\boldsymbol{d}_\mathcal{H}$ for the first line, and the triangle inequality of $\mathcal{W}_2$ for the second line. To prove \eqref{A:eqn:17}, on the one hand, for $t \in [t_m, t_{m+1}]$, we have
    \begin{align}
        \sup_{t \geq 0} \sup_{u \in I} \mathcal{W}^2_2(\tilde{\bar{\mu}}^u_t, \bar{\mu}^u_t) & \leq \sup_{t \geq 0} \sup_{u \in I}\mathbb{E} \bigg[\Big|\mathds{1}_{\big\{U_m \leq \frac{t_{m+1}-t}{h}\big\}} \bar{X}^u_m +\mathds{1}_{\big\{U_m > \frac{t_{m+1}-t}{h}\big\}} \bar{X}^u_{m+1} -\bar{X}^u_t\Big|^2\bigg] \nonumber \\
        & \leq \sup_{t \geq 0} \sup_{u \in I}\mathbb{E} \bigg[\Big|\bar{X}^u_m -\bar{X}^u_t\Big|^2\bigg] + \sup_{t \geq 0} \sup_{u \in I}\mathbb{E} \bigg[\Big|\bar{X}^u_{m+1}-\bar{X}^u_t\Big|^2\bigg] \nonumber \\
        & \leq 2 \widehat{\kappa} h \ \rightarrow 0, \label{A:eqn:18}
    \end{align}
    as $h \rightarrow 0$, where $\{U_m\}$ is a sequence of random variables with probability distribution $\mathcal{U}([0,1])$, independent of $\{X^u_0, Y^u_0, B^u\}_{u \in I}$, and we used the fact that  $[(t_{m+1} - t)\bar{\mu}^u_m+(t-t_m)\bar{\mu}^u_{m+1}]/h$ is the distribution of the random variable
    \begin{equation*}
        \mathds{1}_{\big\{U_m \leq \frac{t_{m+1}-t}{h}\big\}} \bar{X}^u_m +\mathds{1}_{\big\{U_m > \frac{t_{m+1}-t}{h}\big\}} \bar{X}^u_{m+1}
    \end{equation*}
    for the first line, and Lemma \ref{A:lemma:9} (i) for the third line. On the other hand, also from Lemma \ref{A:lemma:9}, we have 
    \begin{equation*}
         \sup_{t \geq 0} \sup_{u \in I} \mathcal{W}_2^2 (\bar{\mu}^u_t, \mu^u_t) \leq  \sup_{t \geq 0} \sup_{u \in I} \mathbb{E} \Big[\big|X^u_t -\bar{X}^u_t\big|^2\Big] \longrightarrow 0,
    \end{equation*}
    which, combined with \eqref{A:eqn:18}, completes the proof of \eqref{A:eqn:17}.

We define a function $G_h$ as follows
$$
(\mathbb{R}^d)^\infty \times \mathcal{H}(\mathbb{R}^d)^\infty \ni \big(x_{0:\infty}, \boldsymbol{\eta}_{0:\infty}\big) \longmapsto G_h \big(x_{0:\infty}; \boldsymbol{\eta}_{0:\infty}\big) := G\big(i_h(x_{0:\infty}); i_h(\boldsymbol{\eta}_{0:\infty})\big) \in \mathbb{R}.
$$
Then, it follows from Proposition \ref{A:prop:4} that
\begin{align}
    \mathbb{E} \Big[G \big(I_h(\bar{X}_{T \wedge \cdot}^u); \{\tilde{\bar{\boldsymbol{\mu}}}_t\}_{t \geq 0}\big)\Big] & = \mathbb{E} \Big[G \Big(i_h\big(\bar{X}^u_{t_0:t_{M-1}}, \bar{X}^u_{T}, \bar{X}^u_{T}, \cdots\big); i_h\big(\bar{\boldsymbol{\mu}}_{t_0:t_\infty}\big)\Big)\Big] \nonumber \\
    & = \mathbb{E} \Big[G_h \big(\bar{X}^u_{t_0:t_{M-1}}, \bar{X}^u_{T}, \bar{X}^u_{ T}, \cdots; \bar{\boldsymbol{\mu}}_{t_0:t_\infty}\big)\Big] \nonumber \\
    & \leq \mathbb{E} \Big[G_h \big(\bar{Y}^u_{t_0:t_{M-1}}, \bar{Y}^u_{ T}, \bar{Y}^u_{T}, \cdots; \bar{\boldsymbol{\nu}}_{t_0:t_\infty}\big)\Big] \nonumber \\
    & = \mathbb{E} \Big[G \Big(i_h\big(\bar{Y}^u_{t_0:t_{M-1}}, \bar{Y}^u_{ T}, \bar{Y}^u_{T}, \cdots \big); i_h\big(\bar{\boldsymbol{\nu}}_{t_0:t_\infty}\big)\Big)\Big] \nonumber \\
    & = \mathbb{E} \Big[G \big(I_h(\bar{Y}^u_{T \wedge \cdot}); \{\tilde{\bar{\boldsymbol{\nu}}}_t\}_{t \geq 0}\big)\Big]. \label{A:eqn:19}
\end{align}
Under assumptions imposed on $G$, together with Lemma \ref{A:lemma:5} and the convergence result established in \eqref{A:eqn:17}, we have 
$$
\mathbb{E} \Big[G \big(I_h(\bar{X}_{T \wedge \cdot}^u); \{\tilde{\bar{\boldsymbol{\mu}}}_t\}_{t \geq 0}\big)\Big] \longrightarrow \mathbb{E} \Big[G \big(X^u_{T \wedge \cdot}; \{\boldsymbol{\mu}_t\}_{t \geq 0}\big)\Big], \quad \text{as $h \rightarrow 0$},
$$
(idem for $Y$). We conclude the proof by letting $h \rightarrow 0$, and then $T \rightarrow \infty$ on both sides of \eqref{A:eqn:19}.
\end{proof}

\section{Proofs  for well-posedness and Euler schemes}\label{A:sec:1}

We first present an elementary result that will be essential for the proofs to be given in this section.

\begin{lemma}[Lemma 7.1 in \cite{BayrWu2022}]
    Let $y: [0,\infty) \rightarrow [0, \infty)$ be a non-negative differentiable function. Suppose 
    \begin{equation*}
        y(t) - y(r) \leq -a_1\int_r^t y(s) \mathrm{d}s + a_2 \int_r^t \sqrt{y(s)} \mathrm{d}s + \int_r^t a_3(s) \mathrm{d}s, \quad \forall t>r \geq 0,
    \end{equation*}
    for some $a_1 >0$, $a_2 \in \mathbb{R}$ and non-negative and continuous function $a_3$. Then,
    \begin{equation*}
        y(t) \leq \max \Bigg\{y(0), \Bigg(\frac{a_2}{2 a_1}+\sqrt{\frac{\sup_{0\leq s\leq t} a_3(s)}{a_1}+\frac{a_2^2}{4 a_1^2}}\Bigg)^2\Bigg\}, \quad \forall t\geq 0.
    \end{equation*}
    In particular, $y(t) \leq \max \Big\{y(0), \frac{\sup_{0 \leq s\leq t}a_3(s)}{a_1}\Big\}$ if $a_2=0$.
    \label{A:lemma:6}
\end{lemma}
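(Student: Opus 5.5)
The plan is the classical comparison argument for a differential inequality with a sublinear perturbation. First I will find the threshold. Set $g(y) := -a_1 y + a_2\sqrt{y} + A$, where $A := \sup_{0\le s\le t} a_3(s)$ for a fixed horizon $t>0$. Writing $z=\sqrt{y}$, we have $g<0$ exactly when $a_1 z^2 - a_2 z - A>0$, that is, when $z > z^* := \frac{a_2}{2a_1}+\sqrt{\frac{A}{a_1}+\frac{a_2^2}{4a_1^2}}$. Put $Y^* := (z^*)^2$ and $M := \max\{y(0), Y^*\}$. The goal is then to show $y(s)\le M$ for all $s\in[0,t]$. When $a_2=0$ the threshold reduces to $Y^*=A/a_1$, which is the particular case.

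The key step is a contradiction argument via first exit above $M$. Suppose $y(s_1)>M$ for some $s_1\in(0,t]$. By continuity of $y$, let $r := \sup\{s\in[0,s_1]: y(s)\le M\}$. This is well defined because $y(0)\le M$, and it gives $y(r)=M$ and $y(s)>M\ge Y^*$ on $(r,s_1]$. On that interval $\sqrt{y(s)}>z^*$, so the integrand satisfies $-a_1 y(s)+a_2\sqrt{y(s)}+a_3(s)\le g(y(s))<0$, using $a_3(s)\le A$. Applying the hypothesis on $[r,s_1]$ then gives
\[
y(s_1)-y(r)\le \int_r^{s_1}\big(-a_1y(s)+a_2\sqrt{y(s)}+a_3(s)\big)\,\mathrm{d}s\le 0 ,
\]
so $y(s_1)\le M$, a contradiction. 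Since $t$ was arbitrary, and $A$ is nondecreasing in $t$ so the bound at time $t$ only uses $\sup_{[0,t]}a_3$, the stated estimate follows.

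The only delicate point is the sign of $a_2$. If $a_2<0$, the quantity $z^*$ is still nonnegative, since the square root dominates $|a_2|/(2a_1)$ when $A\ge0$, and the equivalence $g(y)<0 \iff \sqrt y>z^*$ for $y\ge0$ still holds because $z^*$ is the larger root of the quadratic in $z$. I would also note that differentiability is not actually needed; continuity suffices for the exit-time argument. I expect no real obstacle beyond checking this root computation.
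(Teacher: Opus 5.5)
Your proof is correct and complete. Note that the paper gives no proof of this lemma: it is quoted from \cite{BayrWu2022}, so there is no in-paper argument to match, and yours stands as a self-contained proof.

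\textbf{The root analysis.} It is right for either sign of $a_2$. The smaller root $\frac{a_2}{2a_1}-\sqrt{\frac{A}{a_1}+\frac{a_2^2}{4a_1^2}}$ is $\le 0$, and $z^*\ge 0$. Hence for $z\ge 0$ you have $a_1z^2-a_2z-A>0$ exactly when $z>z^*$.

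\textbf{The first-exit step.} It is sound. The set $\{s\in[0,s_1]: y(s)\le M\}$ is closed and contains $0$, so $r$ lies in it and $r<s_1$. On $(r,s_1]$ you have $\sqrt{y(s)}>z^*$. Since $s_1\le t$, also $a_3(s)\le A$ there. So the integrand is bounded by $g(y(s))<0$, and the hypothesis on $[r,s_1]$ gives $y(s_1)\le y(r)\le M$, a contradiction.

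\textbf{Two small remarks.} You only need $y(r)\le M$, not $y(r)=M$. Also, $A<\infty$ because $a_3$ is continuous on the compact interval $[0,t]$.

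\textbf{Comparison with the cited source.} The differentiability hypothesis there suggests a derivative-based version of the same comparison idea. Dividing by $t-r$ and letting $t\downarrow r$ gives $y'(s)\le -a_1y(s)+a_2\sqrt{y(s)}+a_3(s)$. One then argues via sign or mean value that $y$ cannot rise above the threshold after the last time it equals it. Your integral-form argument skips this limiting step. As you observe, it needs only continuity of $y$, which is a mild generalization. That is harmless here, since every application in the paper uses functions ($l^u$, $\alpha$, $\alpha^u$, $\gamma^h$, $\gamma^{u,h}$) whose integral inequalities come directly from It\^o's formula.
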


\subsection{Proof of Lemma \ref{A:lemma:8}}

To show that $\Gamma$ is a contraction with respect to the metric $\boldsymbol{d}_\mathcal{C}$, we start by considering for any $\boldsymbol{\mu}, \boldsymbol{\nu} \in L^{2,c} (0,\infty;\mathcal{H}(\mathbb{R}^d))$, and denote
\begin{equation*}
    l^u (t) := \mathbb{E} \Big[\big|X^{u, \boldsymbol{\mu}}_t-X^{u, \boldsymbol{\nu}}_t\big|^2\Big].
\end{equation*}
Using It\^o's formula, we have
\begin{equation*}
    l^u(t) = \mathbb{E} \bigg[\int_0^t 2\big(X^{u, \boldsymbol{\mu}}_s - X^{u,\boldsymbol{\nu}}_s\big) \cdot \widehat{b}^{u, \boldsymbol{\mu}, \boldsymbol{\nu}}_s + \mathrm{tr} \Big(\widehat{\sigma}^{u, \boldsymbol{\mu}, \boldsymbol{\nu}}_s \big(\widehat{\sigma}^{u, \boldsymbol{\mu}, \boldsymbol{\nu}}_s\big)^\top\Big)\mathrm{d}s\bigg],
\end{equation*}
where $\widehat{b}^{u, \boldsymbol{\mu}, \boldsymbol{\nu}}_s$ and $\widehat{\sigma}^{u, \boldsymbol{\mu}, \boldsymbol{\nu}}_s$ are defined as:
\begin{equation*}
    \widehat{b}^{u, \boldsymbol{\mu}, \boldsymbol{\nu}}_s := b(s, X^{u, \boldsymbol{\mu}}_s) - b(s, X^{u, \boldsymbol{\nu}}_s), \quad \text{and }\widehat{\sigma}^{u, \boldsymbol{\mu}, \boldsymbol{\nu}}_s := \sigma (s, X^{u, \boldsymbol{\mu}}_s, [G \boldsymbol{\mu}_s]^u) - \sigma (s, X^{u, \boldsymbol{\nu}}_s, [G \boldsymbol{\nu}_s]^u),
\end{equation*}
respectively. Therefore, $l^u(t)$ is differentiable, and
\begin{equation*}
    l^u(t) - l^u(r) = \mathbb{E} \bigg[\int_r^t 2\big(X^{u, \boldsymbol{\mu}}_s - X^{u,\boldsymbol{\nu}}_s\big) \cdot \widehat{b}^{u, \boldsymbol{\mu}, \boldsymbol{\nu}}_s + \mathrm{tr} \Big(\widehat{\sigma}^{u, \boldsymbol{\mu}, \boldsymbol{\nu}}_s \big(\widehat{\sigma}^{u, \boldsymbol{\mu}, \boldsymbol{\nu}}_s\big)^\top\Big)\mathrm{d}s\bigg]
\end{equation*}
for all $t > r \geq 0$. By \eqref{A:eqn:20}, we have
\begin{equation*}
    \mathbb{E}\Big[\big(X^{u, \boldsymbol{\mu}}_s - X^{u,\boldsymbol{\nu}}_s\big) \cdot \widehat{b}^{u, \boldsymbol{\mu}, \boldsymbol{\nu}}_s\Big] \leq -c_0 \mathbb{E}\Big[\big|X^{u, \boldsymbol{\mu}}_s - X^{u,\boldsymbol{\nu}}_s\big|^2 \Big].
\end{equation*}
Then, using the fact that for any $A \in \mathbb{M}^{d \times q}(\mathbb{R})$, $\|A\|_\text{F} \leq \sqrt{d \wedge q} \|A\|$, where $\|A\|_\text{F}$ denotes the Frobenius norm of $A$, defined as $\|A\|_\text{F} := \sqrt{\sum_{i,j} |a_{ij}|^2}$, we obtain that
\begin{align}
    \mathbb{E}\Big[\mathrm{tr} \Big(\widehat{\sigma}^{u, \boldsymbol{\mu}, \boldsymbol{\nu}}_s \big(\widehat{\sigma}^{u, \boldsymbol{\mu}, \boldsymbol{\nu}}_s\big)^\top\Big)\Big] & \leq (d \wedge q) \mathbb{E} \Big[\big\|\widehat{\sigma}^{u, \boldsymbol{\mu}, \boldsymbol{\nu}}_s\big\|^2\Big] \label{A:eqn:51} \\
    & \leq 2 (d \wedge q) L^2 \mathbb{E} \bigg[\big|X^{u, \boldsymbol{\mu}}_s - X^{u, \boldsymbol{\nu}}_s\big|^2 + \int_I \mathcal{W}_2^2(\mu^u_s, \nu^u_s)\mathrm{d}u\bigg] \nonumber,
\end{align}
where we used the Lipschitz property as well as \eqref{A:eqn:22} for the second line. Combining these two estimates gives
\begin{align*}
    l^u(t) -l^u(r) \leq -2 \big(c_0 - (d \wedge q)L^2\big) \int_r^t l^u(s) \mathrm{d}s + 2 (d \wedge q) L^2 \sup_{s \geq 0} \sup_{u \in I} \mathcal{W}_2^2(\mu^u_s, \nu^u_s) \cdot (t-r).
\end{align*}
Since $l^u(t)$ is non-negative and differentiable with $l^u(0) = 0$, using Lemma \ref{A:lemma:6} with $a_1 := 2 (c_0 - (d \wedge q)L^2)$, $a_2:=0$ and $a_3:= 2 (d \wedge q) L^2 \cdot \sup_{t \geq 0} \sup_{u \in I} \mathcal{W}_2^2(\mu^u_t, \nu^u_t)$, we have
\begin{equation*}
    l^u(t) \leq \frac{(d \wedge q) L^2}{c_0 - (d \wedge q)L^2} \cdot \sup_{t \geq 0} \sup_{u \in I} \mathcal{W}_2^2(\mu^u_t, \nu^u_t) \leq \frac{1}{7} \sup_{t \geq 0} \sup_{u \in I} \mathcal{W}_2^2(\mu^u_t, \nu^u_t)
\end{equation*}
uniformly in $t \geq 0$ and $u \in I$, and this further gives
\begin{align*}
    \boldsymbol{d}^2_\mathcal{C} \big(\Gamma(\boldsymbol{\mu}), \Gamma(\boldsymbol{\nu})\big) & = \sup_{t \geq 0} \sup_{u \in I} \mathcal{W}_2^2\big(\mathcal{L}(X^{u, \boldsymbol{\mu}}_t), \mathcal{L}(X^{u, \boldsymbol{\nu}}_t)\big) \\
    & \leq \sup_{t \geq 0} \sup_{u \in I} \mathbb{E} \Big[\big|X^{u, \boldsymbol{\mu}}_t-X^{u, \boldsymbol{\nu}}_t\big|^2\Big] \\
    & \leq \frac{1}{7} \sup_{t \geq 0} \sup_{u \in I} \mathcal{W}_2^2(\mu^u_t, \nu^u_t) = \frac{1}{7} \boldsymbol{d}^2_\mathcal{C}(\boldsymbol{\mu}, \boldsymbol{\nu}).
\end{align*}
Thus, $\Gamma$ is a contraction from $L^{2,c} (0,\infty;\mathcal{H}(\mathbb{R}^d))$ to itself.

\subsection{Proof of Lemma \ref{A:lemma:7}}
 Notice that the map $\mathbb{R}_+ \ni t \mapsto \bar{\mu}^{u,h}_t \in \mathcal{P}_2(\mathbb{R}^d)$ is uniformly continuous as for any $t \geq s$,
\begin{align*}
    \mathcal{W}_2^2\big(\bar{\mu}^{u,h}_t, \bar{\mu}^{u,h}_s\big) & \leq \mathbb{E} \Big[\big|\bar{X}^{u,h}_t - \bar{X}^{u,h}_s\big|^2\Big] \\
    & = \mathbb{E}\left[\bigg|\int_s^t b\big([r]^h, \bar{X}^{u,h}_{[r]^h}\big) \mathrm{d}r + \int_s^t \sigma\big([r]^h, \bar{X}^{u,h}_{[r]^h}, [G \bar{\boldsymbol{\mu}}^h_{[r]^h}]^u\big)\mathrm{d}B^u_r\bigg|^2\right] \\
    & \leq C (t-s),
\end{align*}
with $C$ independent of $u$, $h$, $t$, and $s$, where we used Lemma \ref{A:lemma:9} (i) for the last inequality. Then, by \cite[Lemma A.1]{Zhang2023}, it suffices to show that for any $t \geq 0$,
\begin{equation}
    \text{$I \ni u \longmapsto \bar{\mu}^{u,h}_t := \mathcal{L}(\bar{X}^{u,h}_t) \in \mathcal{P}(\mathbb{R}^d)$ is measurable}.
    \label{A:eqn:48}
\end{equation}
Firstly, we will prove by a forward induction that
\begin{equation}
    \text{$I \ni u \longmapsto  \mathcal{L}(\bar{X}^{u,h}_{t_m}, B^u) \in \mathcal{P}(\mathbb{R}^d \times \mathcal{C}^q_\infty)$ is measurable for each $m=0,1,2,\cdots$}.
    \label{A:eqn:46}
\end{equation}
This holds for $m=0$ due to the measurability of $u \mapsto \mathcal{L}(X^u_0)$, the i.i.d. property of $\{B^u\}$ and the independence between $\{X^u_0, B^u\}$. Next, suppose that \eqref{A:eqn:46} holds up to $k-1$ for some $k \in \mathbb{N}^*$. To complete the proof of \eqref{A:eqn:46}, it suffices to show that
\begin{equation}
    I \ni u \longmapsto \mathbb{E} \Big[f\big(\bar{X}^{u,h}_{t_k}\big)g(B^u)\Big] \in \mathbb{R}
    \label{A:eqn:47}
\end{equation}
is measurable for all bounded continuous functions $f,g$. From \eqref{A:eqn:3}, we can write $\bar{X}^{u,h}_{t_k}$ in the form of 
$
\bar{X}^{u,h}_{t_k} = H_k\big(u, \bar{X}^{u,h}_{t_{k-1}}, B^u_{t_k}, B^u_{t_{k-1}}\big),
$ for some $H_k:I \times \mathbb{R}^d \times \mathbb{R}^q \times \mathbb{R}^q \rightarrow \mathbb{R}^d$. Notice that $H_k(u, \cdot, \cdot, \cdot)$ is continuous on $\mathbb{R}^d \times \mathbb{R}^q \times \mathbb{R}^q$ for each $u \in I$, and $H_k(\cdot, x, b_1, b_2)$ is measurable on $I$ for each $(x,b_1, b_2) \in \mathbb{R}^d \times \mathbb{R}^q \times \mathbb{R}^q$. Then, $H_k$ is a Carath\'eodory function, and hence is jointly measurable (\cite[Lemma 4.12]{Bouchard2015}). This finishes the verification of \eqref{A:eqn:47}. 

To prove \eqref{A:eqn:48}, it suffices to show that
\begin{equation*}
    I \ni u \longmapsto \mathbb{E} \Big[f\big(\bar{X}^{u,h}_{t}\big)g(B^u)\Big] \in \mathbb{R}
\end{equation*}
is measurable for all bounded continuous functions $f,g$. Similarly, we can write $\bar{X}^{u,h}_t$ in the form of $H_t(u, \bar{X}^{u,h}_{t_m}, B^u_t, B^u_{t_m})$ for some $m \in \mathbb{N}$ and jointly measurable $H_t$. We then complete the proof of \eqref{A:eqn:48} by using \eqref{A:eqn:46}.

\subsection{Proof of Proposition \ref{A:prop:1}}
The proof of Proposition \ref{A:prop:1} follows immediately from the following Lemma \ref{A:lemma:9} and the introduction of the weighted $L^2$-norm $\|\cdot\|_K$, see \eqref{A:eqn:49}. We remark that the dissipativity condition is crucial below, as it prevents the moment and approximation estimates from growing with the time horizon.

\begin{lemma}
    Assume that Assumption \ref{A:assump:1} is valid. 
    \begin{itemize}
        \item[\textup{(i)}] There exist $h_0, \widehat{C} > 0$ such that
        \begin{equation*}
            \sup_{t \geq 0} \sup_{u \in I} \mathbb{E}\Big[\big|X^u_t\big|^2\Big] \vee \sup_{h \in (0, h_0)} \sup_{t \geq 0} \sup_{u \in I} \mathbb{E}\Big[\big|\bar{X}^{u,h}_t\big|^2\Big] \leq \widehat{C}.
        \end{equation*}
        Moreover, there exists a constant $\widehat{\kappa}$ such that for any $h \in (0,h_0)$,
        \begin{equation*}
            \sup_{t \geq 0} \sup_{u \in I} \mathbb{E}\Big[\big|\bar{X}^{u,h}_t - \bar{X}^{u,h}_{t_m}\big|^2\Big] \leq \widehat{\kappa} (t-t_m) \leq \widehat{\kappa} h,
        \end{equation*}
        where we used the notation $t_m := m \cdot h$ with $m := \lfloor \frac{t}{h}\rfloor$.

        \item[\textup{(ii)}] Recall $h_0 > 0$ defined in \textup{(i)}. Then, there exists $\widehat{\gamma} >0$ such that for any $h \in (0, h_0)$,
        \begin{equation*}
            \sup_{t \geq 0} \sup_{u \in I} \mathbb{E} \Big[\big|X^u_t - \bar{X}^{u,h}_t\big|^2\Big] \leq \widehat{\gamma} h^{1 \wedge 2\rho}.
        \end{equation*}
    \end{itemize}
    \label{A:lemma:9}
\end{lemma}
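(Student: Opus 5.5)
We would prove Lemma \ref{A:lemma:9} by the same Itô-plus-dissipativity scheme used for Lemma \ref{A:lemma:8}, reducing each estimate to a differential inequality of the form handled by Lemma \ref{A:lemma:6}.

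\emph{Moments of $X^u$.} Set $y^u(t):=\mathbb{E}[|X^u_t|^2]$ and apply Itô's formula. Write $x\cdot b(t,x)=x\cdot(b(t,x)-b(t,0))+x\cdot b(t,0)\le -c_0|x|^2+\kappa_2|x|$. Bound $\|\sigma(t,x,[G\boldsymbol\mu_t]^u)\|^2\le 2\kappa_2^2+4L^2(|x|^2+\mathcal{WOP}_2^2([G\boldsymbol\mu_t]^u,[G\boldsymbol\delta_0]^u))$. By \eqref{A:eqn:22}, the last term is at most $\sup_v y^v(t)$.

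Taking $Y(t):=\sup_u y^u(t)$ is awkward because it is not differentiable. Instead we use that $\boldsymbol\mu\in L^{2,c}$: the fixed point satisfies $S:=\sup_{t,u}y^u(t)<\infty$ a priori, by the completeness argument preceding Lemma \ref{A:lemma:8}. Applying Lemma \ref{A:lemma:6} with $a_1=2(c_0-2qL^2)$, $a_2=2\kappa_2$ and $a_3=C+4qL^2S$ gives $y^u(t)\le \max\{y^u(0),\Theta(S)\}$, where $\Theta$ grows like $\tfrac{4qL^2}{a_1}S$ plus constants. Taking the supremum over $u,t$ yields $S\le C'+\tfrac{4qL^2}{2c_0-4qL^2}S$. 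The coefficient is less than $1$ precisely thanks to $\kappa_1>0$ in \eqref{A:eqn:26}, so $S$ is bounded.

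\emph{Moments of the Euler scheme.} For $\bar X^{u,h}$ we work at grid times. Expand
\[
\mathbb{E}|\bar X_{m+1}|^2=\mathbb{E}|\bar X_m+hb_m|^2+h\,\mathbb{E}\|\sigma\|_F^2 .
\]
We then use $|x+hb(t,x)|^2\le(1-2c_0h+2L^2h^2)|x|^2+Ch|x|+Ch$. This gives the recursion
\[
\bar S_{m+1}\le (1-2\kappa h+O(h^2))\bar S_m+Ch\sqrt{\bar S_m}+Ch,
\]
where $\bar S_m:=\sup_u\mathbb{E}|\bar X^{u,h}_{t_m}|^2$ and the mean-field term has again been absorbed using \eqref{A:eqn:22} and \eqref{A:eqn:26}. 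Choose $h_0$ small so that the contraction factor is below $1-\kappa h$; a discrete analogue of Lemma \ref{A:lemma:6} (induction: if $\bar S_m\le R$ then $\bar S_{m+1}\le R$ for $R$ large) then gives a uniform bound. Off-grid times follow from \eqref{A:eqn:30}, which also directly gives $\mathbb{E}|\bar X_t-\bar X_{t_m}|^2\le 2(t-t_m)^2C+2q(t-t_m)C\le\widehat\kappa(t-t_m)$ by linear growth and the moment bound.

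\emph{Strong error, part (ii).} Let $e^u(t):=\mathbb{E}|X^u_t-\bar X^{u,h}_t|^2$ and apply Itô to the difference, whose drift is $b(t,X_t)-b([t]^h,\bar X_{[t]^h})$. Split this as
\[
[b(t,X_t)-b(t,\bar X_t)]+[b(t,\bar X_t)-b([t]^h,\bar X_t)]+[b([t]^h,\bar X_t)-b([t]^h,\bar X_{[t]^h})].
\]
The first bracket gives $-c_0e^u$ via dissipativity. The second is $O(h^\rho)$ by Hölder continuity and uniform moments. The third is $O(\sqrt h)$ in $L^2$ by part (i); paired with $X-\bar X$ through Young's inequality, these last two produce $\epsilon e^u+C_\epsilon h^{1\wedge2\rho}$.

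Treat the diffusion difference similarly. Its pieces are $L|X_t-\bar X_t|$, the measure term $\le L\sup_v\mathcal{W}_2(\mu^v_t,\bar\mu^{v}_{[t]^h})\le L(\sup_v e^v(t)^{1/2}+\sqrt{\widehat\kappa h})$, and the time/space discretization errors of order $h^{\rho}+\sqrt h$. We then close the estimate with Lemma \ref{A:lemma:6} and the sup-absorption trick from the first step, where $\sup_{t,u}e^u<\infty$ a priori by part (i).

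\emph{Main obstacle.} The expected obstacle is this absorption: the constants from $(a+b+c)^2$ splitting must stay within the margin $c_0-8L^2q>0$. The factor $8$ in \eqref{A:eqn:26} suggests splitting the diffusion error into roughly four pieces, each costing a factor $2$. We would track this carefully, choosing the Young parameters $\epsilon$ small.
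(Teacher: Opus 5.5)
Your argument is correct, but it handles the graphon interaction differently from the paper. The paper never takes a supremum over labels. It uses the first inequality in \eqref{A:eqn:22}, which bounds the interaction term by the label-average $\int_I \mathcal{W}_2^2(\mu^v,\nu^v)\,\mathrm{d}v$, and integrates each individual inequality over $u$. This gives a closed inequality for the averaged quantity ($\alpha(t)$, $\beta^h_m$, $\gamma^h(t)$), which the paper bounds with Lemma \ref{A:lemma:6}; for the Euler scheme it instead uses a geometric sum after linearising $2\kappa_2\sqrt{\beta}\le C_3\beta+\kappa_2^2/C_3$. It then feeds that bound back into the individual inequality in a second pass. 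You instead bound the interaction by $\sup_v$ and absorb it, which is a single pass but needs the supremum to be finite beforehand. For the Euler scheme your forward induction supplies this, and for the error in (ii) it follows from (i). For $X^u$, however, you rely on the fixed point lying in $L^{2,c}(0,\infty;\mathcal{H}(\mathbb{R}^d))$, that is, on the invariance $\Gamma(L^{2,c})\subset L^{2,c}$, which the paper only asserts. You can remove this dependence by running the absorption with $\sup_{t\le T,\,u}$, which is finite by standard finite-horizon estimates, and noting that the resulting bound does not depend on $T$. The averaging route is self-contained and gives explicit constants. Yours avoids the two-pass bootstrap, and because you use Young rather than Cauchy--Schwarz on the drift cross terms, you only need Lemma \ref{A:lemma:6} with $a_2=0$ in (ii). Your margin bookkeeping is also off. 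With your constants, the absorption for $X^u$ only needs $c_0>4qL^2$, so it does not work ``precisely'' because of $\kappa_1>0$. In (ii), splitting with $(a+b)^2\le(1+\eta)a^2+(1+\eta^{-1})b^2$ needs only roughly $c_0>2qL^2$, so the tight fitting into $8qL^2$ that you anticipate as the main obstacle does not arise. The one step where your argument uses exactly $\kappa_1>0$ is the Euler sup-recursion, whose contraction factor with your constants is $1-2c_0h+8qL^2h+2L^2h^2=1-2\kappa_1h+2L^2h^2$.
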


\begin{proof}
\textup{(i)} Using It\^o's formula, we have
\begin{align*}
    \mathbb{E} \Big[\big|X^u_t\big|^2\Big] - \mathbb{E} \Big[\big|X^u_0\big|^2\Big] = \mathbb{E}\bigg[\int_0^t 2 X^u_s \cdot b(s, X^u_s) + \mathrm{tr}\big(\sigma\sigma^\top (s,X^u_s, [G \boldsymbol{\mu}_s]^u)\big) \mathrm{d}s\bigg].
\end{align*}
Therefore, the functions
$$
\alpha^u(t) := \mathbb{E} \Big[\big|X^u_t\big|^2\Big], \qquad \alpha(t) := \int_I \mathbb{E} \Big[\big|X^u_t\big|^2\Big] \mathrm{d}u = \int_I \alpha^u(t) \mathrm{d}u
$$
are differentiable, and
\begin{equation}
    \mathbb{E} \Big[\big|X^u_t\big|^2\Big] - \mathbb{E} \Big[\big|X^u_r\big|^2\Big] = \mathbb{E}\bigg[\int_r^t 2 X^u_s \cdot b(s, X^u_s) + \mathrm{tr}\big(\sigma\sigma^\top (s,X^u_s, [G \boldsymbol{\mu}_s]^u)\big) \mathrm{d}s\bigg]
    \label{A:eqn:21}
\end{equation}
for all $t > r \geq 0$. From \eqref{A:eqn:20}, we have
$$
X^u_s \cdot b(s, X^u_s) \leq -c_0 \big|X^u_s\big|^2 + |b(s,0)|\big|X^u_s\big|,
$$
 and hence 
\begin{align}
    \mathbb{E}\Big[X^u_s \cdot b(s, X^u_s)\Big] \leq \mathbb{E}\Big[-c_0 \big|X^u_s|^2 + |b(s,0)|\big|X^u_s\big|\Big] \leq -c_0 \alpha^u(s) + |b(s,0)| \sqrt{\alpha^u(s)},
    \label{A:eqn:23}
\end{align}
where the last inequality uses Jensen's inequality. For the rest of integrand in \eqref{A:eqn:21}, we deduce by a similar argument as in \eqref{A:eqn:51} that
\begin{align*}
    & \mathrm{tr}\big(\sigma\sigma^\top (s,X^u_s, [G \boldsymbol{\mu}_s]^u)\big) = \big\|\sigma(s,X^u_s, [G \boldsymbol{\mu}_s]^u)\big\|^2_\text{F} \leq (d \wedge q) \big\|\sigma(s,X^u_s, [G \boldsymbol{\mu}_s]^u)\big\|^2 \\
    & \qquad \leq (d \wedge q) \Big(\big\|\sigma(s,0,[G\boldsymbol{\delta}_0]^u)\big\| + L \big|X^u_s\big| + L \cdot \mathcal{WOP}_2\big([G \boldsymbol{\mu}_s]^u,[G\boldsymbol{\delta}_0]^u\big)\Big)^2 \\
    & \qquad \leq 3 (d \wedge q) \Big(\big\|\sigma(s,0,[G\boldsymbol{\delta}_0]^u)\big\|^2 + L^2 \big|X^u_s\big|^2 + L^2 \int_I \mathcal{W}^2_2 \big(\mu^u_s, \delta_0\big) \mathrm{d}u\Big)
\end{align*}
with $\boldsymbol{\delta}_0 :=  \{\delta^u_0\}_{u \in I}$ and $\delta^u_0 := \delta_0, \; \forall u \in I$, where the second line uses the Lipschitz property of $\sigma$, and the last line uses \eqref{A:eqn:22}. Consequently, we obtain that
$$
\mathbb{E} \Big[\mathrm{tr}\big(\sigma\sigma^\top (s,X^u_s, [G \boldsymbol{\mu}_s]^u)\big)\Big] \leq 3 (d \wedge q) \Big(\big\|\sigma(s,0,[G\boldsymbol{\delta}_0]^u)\big\|^2 + L^2 \alpha^u(s) + L^2 \alpha(s)\Big).
$$
Combining this with \eqref{A:eqn:23} gives
\begin{align}
    \alpha^u(t) - \alpha^u(r) \leq & -2\Big(c_0 - \frac{3}{2} L^2 (d \wedge q)\Big) \int_r^t \alpha^u(s)\mathrm{d}s + 2 \kappa_2 \int_r^t \sqrt{\alpha^u(s)} \mathrm{d}s \nonumber \\
    &+ 3L^2 (d\wedge q) \int_r^t \alpha(s) \mathrm{d}s + 3 (d\wedge q)\kappa_2^2 (t-r),\label{A:eqn:24}
\end{align}
where we used \eqref{A:eqn:59}. Integrating over $u \in I$ gives
\begin{equation*}
    \alpha(t) - \alpha(r) \leq -2\big(c_0 - 3L^2(d \wedge q)\big) \int_r^t \alpha(s)\mathrm{d}s + 2\kappa_2 \int_r^t \sqrt{\alpha(s)} \mathrm{d}s + 3 (d\wedge q)\kappa_2^2 (t-r).
\end{equation*}

Since $\alpha(t)$ is non-negative and differentiable, using Lemma \ref{A:lemma:6} with $a_1 :=2(c_0 - 3L^2(d \wedge q)) > 0$, $a_2:=2\kappa_2$, and $a_3 := 3 \kappa_2^2 (d\wedge q)$, we have $\alpha(t) \leq C_1 < \infty$, for any $t \geq 0$, where
\begin{align*}
    C_1  := \int_I \mathbb{E}\Big[\big|X^u_0\big|^2\Big]\mathrm{d}u \vee \left(\frac{\kappa_2}{2 \big(c_0 - 3L^2(d \wedge q)\big)}+\sqrt{\frac{3 \kappa_2^2 (d\wedge q)}{2 \big(c_0 - 3L^2(d \wedge q)\big)}+\frac{\kappa_2^2}{4 \big(c_0 - 3L^2(d \wedge q)\big)^2}}\right)^2.
\end{align*}
Substituting $\alpha(t) \leq C_1$ into \eqref{A:eqn:24} yields
\begin{align}
    \alpha^u(t) - \alpha^u(r) \leq & -2\Big(c_0 - \frac{3}{2} L^2 (d \wedge q)\Big) \int_r^t \alpha^u(s)\mathrm{d}s + 2 \kappa_2 \int_r^t \sqrt{\alpha^u(s)} \mathrm{d}s \nonumber \\
    &+ 3(d\wedge q) (L^2 C_1 + \kappa_2^2)(t-r).
\end{align}
Since $\alpha^u(t)$ is non-negative and differentiable, using Lemma \ref{A:lemma:6} again, we have $\alpha^u(t) \leq C_2$, uniformly in $t \geq 0$ and $u \in I$, with
\begin{align*}
    C_2 := \sup_{u \in I} \mathbb{E}\Big[|X^u_0|^2\Big] \vee \left(\frac{\kappa_2}{2\Big(c_0 - \frac{3}{2} L^2 (d \wedge q)\Big)}+\sqrt{\frac{3(d\wedge q) (L^2 C_1 + \kappa_2^2)}{2\Big(c_0 - \frac{3}{2} L^2 (d \wedge q)\Big)}+\frac{\kappa_2^2}{4\Big(c_0 - \frac{3}{2} L^2 (d \wedge q)\Big)^2}}\right)^2,
\end{align*}
which completes the proof of
\begin{equation*}
    \sup_{u \in I} \sup_{t \geq 0} \mathbb{E} \Big[\big|X^u_t\big|^2\Big] \leq C_2 < \infty.
\end{equation*}

Next, we claim that for some $h_0 \in (0, \infty)$
\begin{equation}
    \sup_{h \in (0,h_0)} \sup_{m \geq 1} \sup_{u \in I} \mathbb{E}\Big[\big|\bar{X}^{u,h}_{t_m}\big|^2\Big] < \infty.
    \label{A:eqn:28}
\end{equation}
To see this, write
\begin{align*}
    & \big|\bar{X}^{u,h}_{t_{m+1}}\big|^2 - \big|\bar{X}^{u,h}_{t_{m}}\big|^2 \\
    & = 2 \bar{X}^{u,h}_{t_{m}} \cdot \big(\bar{X}^{u,h}_{t_{m+1}} - \bar{X}^{u,h}_{t_{m}}\big) + \big|\bar{X}^{u,h}_{t_{m+1}} - \bar{X}^{u,h}_{t_{m}}\big|^2 \\
    & = 2 \bar{X}^{u,h}_{t_{m}} \cdot b(t_m, \bar{X}^{u,h}_{t_m})h + 2 \bar{X}^{u,h}_{t_{m}} \cdot \sigma(t_m, \bar{X}^{u,h}_{t_m}, [G \bar{\boldsymbol{\mu}}^h_{t_m}]^u) \Delta B^u_{t_m} + \big|b(t_m, \bar{X}^{u,h}_{t_m})\big|^2 h^2 \\
    & \quad + \big|\sigma(t_m, \bar{X}^{u,h}_{t_m}, [G \bar{\boldsymbol{\mu}}^h_{t_m}]^u) \Delta B^u_{t_m}\big|^2 + 2h b(t_m, \bar{X}^{u,h}_{t_m}) \cdot \sigma(t_m, \bar{X}^{u,h}_{t_m}, [G \bar{\boldsymbol{\mu}}^h_{t_m}]^u) \Delta B^u_{t_m},
\end{align*}
where we denote $\Delta B^u_{t_m} := B^u_{t_{m+1}} - B^u_{t_m}$, and let $\beta^{u,h}_m := \mathbb{E}[|\bar{X}^{u,h}_{t_m}|^2]$, and $\beta^{h}_m := \int_I \mathbb{E}[|\bar{X}^{u,h}_{t_m}|^2] \mathrm{d}u := \int_I \beta^{u,h}_m \mathrm{d}u$. Using \eqref{A:eqn:20}, \eqref{A:eqn:22}, \eqref{A:eqn:59} and the Lipschitz properties of $b, \sigma$, we have
\begin{equation*}
    \mathbb{E} \Big[\bar{X}^{u,h}_{t_m} \cdot b(t_m, \bar{X}^{u,h}_{t_m})\Big] \leq \mathbb{E} \Big[\kappa_2 \big|\bar{X}^{u,h}_{t_m}\big| -c_0\big|\bar{X}^{u,h}_{t_m}\big|^2\Big] \leq \kappa_2 \sqrt{\beta^{u,h}_m} -c_0 \beta^{u,h}_m,
\end{equation*}
\begin{equation}
    \mathbb{E} \Big[\big|b(t_m, \bar{X}^{u,h}_{t_m})\big|^2\Big] \leq 2\kappa_2^2 + 2L^2 \mathbb{E}\Big[\big|\bar{X}^{u,h}_{t_m}\big|^2\Big]  = 2\kappa_2^2 + 2L^2 \beta^{u,h}_m,
    \label{A:eqn:60}
\end{equation}
\begin{align}
    \mathbb{E} \Big[\big\|\sigma(t_m, \bar{X}^{u,h}_{t_m}, [G \bar{\boldsymbol{\mu}}^h_{t_m}]^u)\big\|^2\Big] & \leq 3\kappa_2^2 + 3L^2 \mathbb{E}\Big[\big|\bar{X}^{u,h}_{t_m}\big|^2\Big] + 3L^2 \mathcal{WOP}_2^2 \big([G \bar{\boldsymbol{\mu}}^h_{t_m}]^u, [G \boldsymbol{\delta}_0]^u\big) \nonumber \\
    & \leq 3\kappa_2^2 + 3L^2 \mathbb{E}\Big[\big|\bar{X}^{u,h}_{t_m}\big|^2\Big] + 3L^2 \int_I \mathcal{W}^2_2 (\bar{\mu}^{u,h}_{t_m}, \delta_0) \mathrm{d}u \nonumber \\
    & \leq 3\kappa_2^2 + 3L^2 \beta^{u,h}_m + 3L^2 \int_I \beta^{u,h}_m \mathrm{d}u \nonumber \\
    & = 3\kappa_2^2 + 3L^2 \beta^{u,h}_m + 3L^2 \beta^{h}_m. \label{A:eqn:61}
\end{align}
Combining these three estimates gives
\begin{align}
    \beta^{u,h}_{m+1} -\beta^{u,h}_m \leq \bigg[3q\big(\kappa_2^2 +L^2 \beta^{u,h}_m + L^2 \beta^h_m\big) -2c_0 \beta^{u,h}_m + 2\kappa_2 \sqrt{\beta^{u,h}_m}\bigg]h + 2(\kappa_2^2 + L^2 \beta^{u,h}_m) h^2.
    \label{A:eqn:27}
\end{align}
Integrating over $u \in I$ yields
\begin{align}
    \beta_{m+1}^h - \beta_m^h & \leq \bigg[3q\kappa_2^2 +\big(6qL^2 -2c_0\big) \beta^{h}_m + 2\kappa_2 \sqrt{\beta^{h}_m}\bigg]h + 2(\kappa_2^2 + L^2 \beta^{h}_m) h^2 \nonumber \\
    & \leq \bigg[3q\kappa_2^2 +\big(8qL^2 -c_0\big) \beta^{h}_m + \frac{\kappa_2^2}{C_3}\bigg]h + 2(\kappa_2^2 + L^2 \beta^{h}_m) h^2 \label{A:eqn:25}
\end{align}
since
$$
2 \kappa_2 \sqrt{\beta^h_m} \leq C_3 \beta^h_m + \frac{\kappa_2^2}{C_3}, \quad \text{with } C_3:= c_0 + 2qL^2 > 0.
$$
Rewrite \eqref{A:eqn:25} as
\begin{equation*}
    \beta^h_{m+1} \leq (1 - \kappa_h h)\beta^h_m + C_4 h, 
\end{equation*}
where $\kappa_h := c_0 - 8qL^2 - 2L^2 h$ and $C_4:= 5 q\kappa_2^2 + \kappa_2^2/C_3$. From \eqref{A:eqn:26}, we can choose $h_0$ such that $\inf_{h \in (0, h_0)} \kappa_h > 0$ and $1-h \kappa_h \in (0, 1)$ for all $h \in (0, h_0)$. Then, for all $h \in (0, h_0)$,
\begin{align*}
    \beta^h_{m+1} & \leq (1-\kappa_h h)\beta_m^h + C_4 h \leq (1-\kappa_h h)^2 \beta^h_{m-1} + (1-\kappa_h h) C_4 h + C_4h \leq \cdots \\
    & \leq (1-\kappa_h h)^{m+1} \beta^h_0 + \sum_{j=0}^m (1-\kappa_h h)^j C_4 h \\
    & \leq \int_I \mathbb{E}\Big[\big|X^u_0\big|^2\Big] \mathrm{d}u + \frac{C_4}{\kappa_{h_0}}=: C_5.
\end{align*}
Substituting this back to \eqref{A:eqn:27} gives
\begin{align*}
    \beta^{u,h}_{m+1} - \beta^{u,h}_m & \leq \bigg[3 q \kappa_2^2 + (3qL^2 - 2c_0)\beta^{u,h}_m + 3qL^2 C_5 + 2\kappa_2 \sqrt{\beta^{u,h}_m}\bigg]h + 2(\kappa_2^2 + L^2\beta^{u,h}_m)h^2 \\
    & \leq \bigg[3q \kappa_2^2 + \big(8qL^2 - c_0\big)\beta^{u,h}_m + 3qL^2 C_5 + \frac{\kappa_2^2}{C_6}\bigg]h + 2(\kappa_2^2 + L^2\beta^{u,h}_m)h^2
\end{align*}
since 
$$
2 \kappa_2 \sqrt{\beta^{u,h}_m} \leq C_6 \beta^{u,h}_m + \frac{\kappa_2^2}{C_6}, \quad \text{with } C_6:= c_0 + 5qL^2 > 0.
$$
Following the same derivation as before, we obtain that
\begin{equation*}
    \beta^{u,h}_{m+1} \leq (1-\kappa_h h)\beta^{u,h}_m + C_7 h \leq \sup_{u \in I} \mathbb{E}\Big[\big|X^u_0\big|^2\Big] + \frac{C_7}{\kappa_{h_0}} =: C_8,
\end{equation*}
where $C_7 := 5q\kappa_2^2 + 3qL^2 C_5 + \kappa_2^2/C_6$, and  one therefore completes the proof of \eqref{A:eqn:28}.

Then, using \eqref{A:eqn:28}-\eqref{A:eqn:61} and  the Cauchy-Schwarz inequality, we have for any $t \in [t_{m}, t_{m+1}]$ and $h \in (0, h_0)$,
\begin{align}
    \mathbb{E}\Big[\big|\bar{X}^{u,h}_t - \bar{X}^{u,h}_{t_m}\big|^2\Big] & \leq 2 \mathbb{E} \Big[\big|b(t_m, \bar{X}^{u,h}_{t_m})\big|^2 (t-t_m)^2 + q \big\|\sigma(t_m, \bar{X}^{u,h}_{t_m}, [G \bar{\boldsymbol{\mu}}^h_{t_m}]^u)\big\|^2 (t-t_m) \Big] \nonumber \\
    & \leq C_9 (t-t_m) \leq C_9 h, \label{A:eqn:29}
\end{align}
where 
$
C_9 := 2q(5 \kappa_2^2 + 5L^2 C_8 + 3L^2 C_5). 
$
Combining \eqref{A:eqn:29} with \eqref{A:eqn:28}, we can easily verify that
\begin{equation}
    \sup_{h \in (0, h_0)} \sup_{t \geq 0} \sup_{u \in I} \mathbb{E}\Big[\big|\bar{X}^{u,h}_t\big|^2\Big] =: C_{10} < \infty,
    \label{A:eqn:32}
\end{equation}
which completes the proof of  part (i) by letting $\widehat{C} := C_2 \vee C_{10}$ and $\widehat{\kappa} := C_9$.

(ii) Recall $h_0$ defined in (i). Firstly, notice that the definition of  continuous-time Euler scheme \eqref{A:eqn:30} implies that
\begin{equation}
    \mathrm{d} \bar{X}^{u,h}_t = b([t]^h, \bar{X}^{u,h}_{[t]^h})\mathrm{d} t + \sigma\big([t]^h, \bar{X}^{u,h}_{[t]^h}, [G \bar{\boldsymbol{\mu}}^h_{[t]^h}]^u\big) \mathrm{d}B^u_t, \quad \bar{X}^{u,h}_0 = X^u_0, 
    \label{A:eqn:43}
\end{equation}
where $[t]^h := \lfloor \frac{t}{h} \rfloor \cdot h$. In addition, we define $\widehat{\sigma}^{u,h}_s$ as follows:
\begin{equation}
    \widehat{\sigma}^{u,h}_s := \sigma \big(s, X^u_s, [G \boldsymbol{\mu}_s]^u\big) - \sigma\big([s]^h, \bar{X}^{u,h}_{[s]^h}, [G \bar{\boldsymbol{\mu}}^h_{[s]^h}]^u\big).
    \label{A:eqn:44}
\end{equation}
Then, using It\^o's formula, we have
\begin{align*}
    \mathbb{E}\Big[\big|X^u_t - \bar{X}^{u,h}_t\big|^2\Big] = \mathbb{E} \bigg[\int_0^t 2\big(X^u_s - \bar{X}^{u,h}_s\big) \cdot \Big(b\big(s, X^u_s\big) - b\big([s]^h, \bar{X}^{u,h}_{[s]^h}\big)\Big) + \mathrm{tr} \Big(\widehat{\sigma}^{u,h}_s \big(\widehat{\sigma}^{u,h}_s\big)^\top\Big) \mathrm{d}s \bigg].
\end{align*}
This implies that the functions
$$
\gamma^{u,h}(t) := \mathbb{E}\Big[\big|X^u_t - \bar{X}^{u,h}_t\big|^2\Big], \quad \text{and } \gamma^h(t) := \int_I \mathbb{E}\Big[\big|X^u_t - \bar{X}^{u,h}_t\big|^2\Big] \mathrm{d}u = \int_I \gamma^{u,h}(t) \mathrm{d}u
$$
are differentiable, and
\begin{align*}
    \gamma^{u,h}(t) -  \gamma^{u,h}(r) & = \mathbb{E} \bigg[\int_r^t 2\big(X^u_s - \bar{X}^{u,h}_s\big) \cdot \Big(b\big(s, X^u_s\big) - b\big([s]^h, \bar{X}^{u,h}_{[s]^h}\big)\Big) + \mathrm{tr} \Big(\widehat{\sigma}^{u,h}_s \big(\widehat{\sigma}^{u,h}_s\big)^\top\Big) \mathrm{d}s \bigg] \\
    & \leq \mathbb{E} \bigg[\int_r^t 2\big(X^u_s - \bar{X}^{u,h}_s\big) \cdot \Big(b\big(s, X^u_s\big) - b\big([s]^h, \bar{X}^{u,h}_{[s]^h}\big)\Big) + (d \wedge q) \big\|\widehat{\sigma}^{u,h}_s\big\|^2 \mathrm{d}s \bigg]
\end{align*}
for all $t > r \geq 0$. By adding and subtracting terms, we have
\begin{align*}
    &\mathbb{E} \Big[\big(X^u_s - \bar{X}^{u,h}_s\big) \cdot \Big(b\big(s, X^u_s\big) - b\big([s]^h, \bar{X}^{u,h}_{[s]^h}\big)\Big)\Big]  \\ 
    & \quad = \mathbb{E} \Big[\big(X^u_s - \bar{X}^{u,h}_s\big) \cdot \Big(b\big(s, X^u_s\big) - b\big(s, \bar{X}^{u,h}_{s}\big)\Big)\Big]  + \mathbb{E} \Big[\big(X^u_s - \bar{X}^{u,h}_s\big) \cdot \Big(b\big(s, \bar{X}^{u,h}_{s}\big) - b\big([s]^h, \bar{X}^{u,h}_{s}\big)\Big)\Big] \\
    & \qquad + \mathbb{E} \Big[\big(X^u_s - \bar{X}^{u,h}_s\big) \cdot \Big(b\big([s]^h, \bar{X}^{u,h}_{s}\big) - b\big([s]^h, \bar{X}^{u,h}_{[s]^h}\big)\Big)\Big] \\
    & \quad \leq -c_0 \mathbb{E} \Big[\big|X^u_s - \bar{X}^{u,h}_s\big|^2\Big] + \tilde{L} \mathbb{E} \Big[\big|X^u_s - \bar{X}^{u,h}_s\big| \big(1+ \big|\bar{X}^{u,h}_s\big|\big)\Big]h^\rho + L \mathbb{E} \Big[\big|X^u_s - \bar{X}^{u,h}_s\big|\big|\bar{X}^{u,h}_s-\bar{X}^{u,h}_{[s]^h}\big|\Big]\\
    & \quad \leq -c_0 \gamma^{u,h}(s) + C_{11} \sqrt{\gamma^{u,h}(s)} h^{\rho \wedge \frac{1}{2}}
\end{align*}
with $C_{11} := \tilde{L}(1+\sqrt{C_{10}})+L\sqrt{C_9}$,  where the fourth line uses \eqref{A:eqn:20} and Assumption \ref{A:assump:1} (i), and the last line uses the Cauchy-Schwarz inequality as well as \eqref{A:eqn:32} and \eqref{A:eqn:29}. Similarly, we have
\begin{equation*}
    \mathbb{E} \Big[\big\|\widehat{\sigma}^{u,h}_s\big\|^2\Big] \leq 6 \tilde{L}^2(1+2C_2) h^{2\rho} + 8L^2 \gamma^{u,h}(s) + 8L^2 \gamma^h(s)+ 16C_9 L^2 h.
\end{equation*}
Combining these two estimates, we have
\begin{align}
    \gamma^{u,h}(t) - \gamma^{u,h}(r) & \leq 2 \int_r^t \Big(-c_0 \gamma^{u,h}(s) + C_{11} \sqrt{\gamma^{u,h}(s)} h^{\rho \wedge \frac{1}{2}}\Big) \mathrm{d}s \nonumber \\
    & \quad + (d\wedge q) \int_r^t \Big[6\tilde{L}^2(1+2C_2)h^{2\rho} + 8L^2 \gamma^{u,h}(s) + 8L^2 \gamma^h(s) + 16C_9 L^2 h\Big]\mathrm{d}s \label{A:eqn:33}
\end{align}
for all $t > r \geq 0$. Integrating over $u \in I$ gives
\begin{equation*}
    \gamma^h(t) - \gamma^h(r) \leq -2\big(c_0 - 8L^2 (d\wedge q)\big) \int_r^t \gamma^h(s) \mathrm{d}s + 2C_{11} h^{\rho \wedge \frac{1}{2}} \int_r^t \sqrt{\gamma^{h}(s)} \mathrm{d}s + C_{12} h^{2\rho \wedge 1} (t-r)
\end{equation*}
with $C_{12} := (6\tilde{L}^2(1+2C_2)+16C_9L^2)(d\wedge q)$. By Lemma \ref{A:lemma:6} with $a_1 = 2(c_0 - 8L^2 (d\wedge q)) > 2\kappa_1 >0$, $a_2 =  2C_{11} h^{\rho \wedge \frac{1}{2}}$ and $a_3 = C_{12} h^{2\rho \wedge 1}$, we have for any $t \geq 0$ and $h \in (0, h_0)$,
\begin{equation*}
    \gamma^h(t) \leq \Bigg(\frac{C_{11}}{ 2\big(c_0 - 8L^2 (d\wedge q)\big)}+\sqrt{\frac{C_{12}}{2\big(c_0 - 8L^2 (d\wedge q)\big)}+\frac{C_{11}^2}{4\big(c_0 - 8L^2 (d\wedge q)\big)^2}}\Bigg)^2 h^{2 \rho \wedge 1} =: C_{13} h^{2 \rho \wedge 1}.
\end{equation*}
Substituting this back to \eqref{A:eqn:33}, we obtain that
\begin{equation*}
    \gamma^{u,h}(t) - \gamma^{u,h}(r) \leq -2\big(c_0-4L^2(d\wedge q)\big) \int_r^t \gamma^{u,h}(s) \mathrm{d}s + 2C_{11} h^{\rho \wedge \frac{1}{2}} \int_r^t \sqrt{\gamma^{u,h}(s)} \mathrm{d}s + C_{14} h^{2\rho \wedge 1} (t-r)
\end{equation*}
with $C_{14} :=(6\tilde{L}^2(1+2C_2)+16C_9L^2+8L^2 C_{13})(d\wedge q)$. Using Lemma \ref{A:lemma:6} again, we obtain that $\gamma^{u,h}(t) \leq C_{15} h^{2\rho \wedge 1}$, uniformly in $u \in I$ and $t \geq 0$, for any $h \in (0,h_0)$, where $C_{15}$ is defined as
$$
C_{15} := \Bigg(\frac{C_{11} }{2\big(c_0-4L^2(d\wedge q)\big)}+\sqrt{\frac{C_{14} }{2\big(c_0-4L^2(d\wedge q)\big)}+\frac{C_{11}^2 }{4\big(c_0-4L^2(d\wedge q)\big)^2}}\Bigg)^2.
$$
Thus, we complete the proof by letting $\widehat{\gamma} := C_{15}$.
\end{proof}

\section{Application to graphon mean-field games} \label{sec:application}
In this section, we provide an example to illustrate how the (extended) functional convex order results can be used to compare value functions in linear-quadratic(LQ) graphon mean-field games (GMFGs). Given two graphons $M, G$, we first define the corresponding graphon-weighted means $Z^{u}, Z^{u,\sigma}: \mathcal{H}(\mathbb{R}) \rightarrow \mathbb{R}$ by
\begin{equation*}
    Z^u(\boldsymbol{\mu}) := \int_I \int_\mathbb{R} M(u,v) z \mu^v(\mathrm{d}z)\mathrm{d}v, \quad \text{and } Z^{u, \sigma}(\boldsymbol{\mu}) := \int_I \int_\mathbb{R} G(u,v) \sigma(z) \mu^v(\mathrm{d}z)\mathrm{d}v.
\end{equation*}
Then, let us consider the following one-dimensional controlled dynamics:
\begin{equation*}
    \mathrm{d} X^{u, \alpha}_t = \alpha^u_t \mathrm{d}t + Z^{u,\sigma} (\boldsymbol{\mu}_t)\mathrm{d}B^u_t, \quad X^{u, \alpha}_0 = x^u \in \mathbb{R},
\end{equation*}
where $\alpha := \{\alpha^u\}_{u \in I}$, and $\alpha^u := \{\alpha^u_t\}_{t \in [0,T]}$ represents the control of agent $u$. The function $\sigma: \mathbb{R} \rightarrow \mathbb{R}_+$ is Lipschitz and convex; $\{B^u\}_{u \in I}$ is a sequence of independent one-dimensional Brownian motions; $\boldsymbol{\mu} := \{\boldsymbol{\mu}_t\}_{t \in [0, T]}$ is a $\mathcal{H}(\mathbb{R})$-valued flow of probability measure ensembles; and the map $I \ni u \mapsto x^u \in \mathbb{R}$ is measurable satisfying $\sup_{u \in I} |x^u|^{2} < \infty$. Agent $u$ aims to minimize her cost function $\mathcal{J}^{u, \boldsymbol{\mu}}$ defined as follows:
\begin{align*}
    \mathcal{J}^{u, \boldsymbol{\mu}}(t,x^u, \alpha^u) := \frac{1}{2}\mathbb{E} \bigg[\int_t^T \Big(\big(X^{u,\alpha}_s - Z^u(\boldsymbol{\mu}_s)\big)^2 + \big(\alpha^u_s\big)^2\Big)\mathrm{d}s + \big(X^{u,\alpha}_T - Z^u(\boldsymbol{\mu}_T)\big)^2\bigg].
\end{align*}

To solve this problem, we first fix $\boldsymbol{\mu}$. The value function for agent $u$, defined as $\mathcal{V}^{u, \boldsymbol{\mu}}(t,x^u) := \inf_{\alpha^u} \mathcal{J}^{u, \boldsymbol{\mu}}(t,x^u, \alpha^u)$, solves the following Hamilton-Jacobi-Bellman (HJB) equation:
\begin{equation}
    \left\{
    \begin{aligned}
        & -\partial_t \mathcal{V}^{u, \boldsymbol{\mu}}(t,x^u) - \mathcal{H}^{u, \boldsymbol{\mu}}\big(t,x^u, \partial_x \mathcal{V}^{u, \boldsymbol{\mu}}(t,x^u), \partial_{x x} \mathcal{V}^{u, \boldsymbol{\mu}}(t,x^u)\big) = 0, \quad \text{in $(0,T) \times \mathbb{R}$}, \\
        & \mathcal{V}^{u, \boldsymbol{\mu}}(T,x^u) = \frac{1}{2} \big(x^u - Z^u(\boldsymbol{\mu}_T)\big)^2, \hspace{5.5cm} \text{in $\mathbb{R}$},
    \end{aligned}
    \right.
    \label{A:eqn:34}
\end{equation}
with the Hamiltonian $\mathcal{H}^{u, \boldsymbol{\mu}}$ defined by
\begin{align*}
    \mathcal{H}^{u, \boldsymbol{\mu}} \big(t,x, p, M\big) := \inf_{\alpha^u_t} \bigg\{\frac{1}{2} \big(\alpha^u_t\big)^2 +p \cdot \alpha^u_t \bigg\} + \frac{1}{2} \big(x - Z^u(\boldsymbol{\mu}_t)\big)^2 + \frac{1}{2} \big(Z^{u,\sigma}(\boldsymbol{\mu}_t)\big)^2 \cdot M .
\end{align*}
Without ambiguity, we will write $\mathcal{J}^u, \mathcal{V}^u, \mathcal{H}^u$ instead of $\mathcal{J}^{u, \boldsymbol{\mu}}, \mathcal{V}^{u, \boldsymbol{\mu}}, \mathcal{H}^{u, \boldsymbol{\mu}}$ to simplify notation. In the LQ setting, a natural candidate for the value function is given by 
\begin{equation*}
    \mathcal{V}^u (t,x) := \frac{\eta^u_t}{2} x^2 + h^u_t x + \beta^u_t.
\end{equation*}
Then, the optimal control for agent $u$ takes the form
\begin{equation*}
    \alpha^{u,*}_t = -  \partial_x \mathcal{V}^{u}(t,x^u) = -\eta^u_t x^u - h^u_t,
\end{equation*}
where the first equality comes from the definition of $\mathcal{H}^u$, and we denote $\alpha^* := \{\alpha^{u,*}\}_{u \in I}$. Substituting the above $\mathcal{V}^u$ and $\alpha^{u,*}$ into \eqref{A:eqn:34}, we obtain the following ordinary differential equation (ODE) system:
\begin{equation}
    \left\{
    \begin{aligned}
        & (\eta^u_t)^\prime = (\eta^u_t)^2 - 1, \hspace{6.33cm} \eta^u_T =1, \\
        & (h^u_t)^\prime = \eta^u_t h^u_t + Z^u(\boldsymbol{\mu}_t), \hspace{5.34cm} h^u_T = -Z^u(\boldsymbol{\mu}_T),\\
        & (\beta^u_t)^\prime = -\frac{1}{2} \big(Z^u(\boldsymbol{\mu}_t)\big)^2 + \frac{1}{2} (h^u_t)^2 - \frac{1}{2}\eta^u_t \big(Z^{u,\sigma}(\boldsymbol{\mu}_t)\big)^2, \qquad \beta^u_T = \frac{1}{2} \big(Z^u(\boldsymbol{\mu}_T)\big)^2.
    \end{aligned}
    \right.
    \label{A:eqn:35}
\end{equation}
Notice that the first equation is a Riccati equation, which has a unique closed-form solution independent of $u \in I$, see (2.50) in \cite{Carmona2018}. Moreover, $\eta^u_t \equiv 1$, for any $t \in [0, T]$.

The previous analysis is valid for any plug-in $\boldsymbol{\mu}$. Now we consider a special $\boldsymbol{\mu}^*$ defined as:
$$
\boldsymbol{\mu}^*_t = \{\mu^{u,*}_t\}_{u \in I} := \big\{\mathcal{L} (X^{u, \alpha^{*}}_t)\big\}_{u \in I}, \qquad \forall t \in [0,T],
$$
where $\{X^{u, \alpha^{*}}\}_{u \in I}$ is the unique strong solution, see \cite[Proposition 2.1]{BayrChak2023}, to
\begin{equation}
    \mathrm{d} X^{u, \alpha^{*}}_t = \big(- X^{u,\alpha^*}_t - h^u_t\big) \mathrm{d}t + Z^{u, \sigma} (\boldsymbol{\mu}^*_t) \mathrm{d}B^u_t, \quad X^{u, \alpha^*}_0 =x^u \in \mathbb{R}.
    \label{A:eqn:37}
\end{equation}
We denote $m^u_t := \mathbb{E} [X^{u, \alpha^*}_t]$, and notice that $m^u_t = \int_\mathbb{R} x \mu^{u,*}_t(\mathrm{d}x)$. Then, to solve the corresponding ODE system \eqref{A:eqn:35}, we turn to verify the unique solvability of the system
\begin{equation}
    \left\{
    \begin{aligned}
        & (h^u_t)^\prime = h^u_t +  \int_I M(u,v) m^v_t \mathrm{d}v, \qquad h^u_T = -  \int_I M(u,v) m^v_T \mathrm{d}v, \\
        & (m^u_t)^\prime = - m^u_t - h^u_t, \hspace{2.43cm} m^u_0 = x^u.
    \end{aligned}
    \right.
    \label{A:eqn:36}
\end{equation}

To solve the system \eqref{A:eqn:36}, we use similar techniques as  \cite[Section 5]{Caines2021} by converting the existence analysis into a fixed-point problem. More precisely, we view $m^u_t = m(u,t)$ as a function of $(u,t)$. Below we derive an equation for $m^u_t$ by eliminating $h^u_t$. Denote the function space $D_\Lambda$ consisting of continuous $\mathbb{R}$-valued functions on $I \times [0, T]$, equipped with the norm $\|\check{m}\| := \sup_{u, t}|\check{m}(u,t)|$. Define the operator $\Lambda$ as follows: for $\check{m} \in D_\Lambda$,
\begin{align*}
    \big(\Lambda \check{m}\big) (u,t) := \int_0^t \mathrm{e}^{r-t} \bigg\{ \mathrm{e}^{r-T} \int_I M(u,v) \check{m}(v,T)\mathrm{d}v + \int_r^T \mathrm{e}^{r-l} \bigg(\int_I M(u,v) \check{m}(v,l)\mathrm{d}v\bigg)\mathrm{d}l\bigg\}\mathrm{d}r.
\end{align*}
If we assume in addition that
\begin{equation}
    \text{$\int_I M(u,v) h(v) \mathrm{d}v$ is continuous in $u \in I$, \quad for any bounded, measurable $h: I \rightarrow \mathbb{R}$}, 
    \label{A:eqn:38}
\end{equation}
see (H5) in \cite{Caines2021} for a similar assumption. Then, $\Lambda$ is from $D_\Lambda$ to itself.

The solution of \eqref{A:eqn:36} further reduces to finding a fixed-point to the equation:
\begin{equation*}
    \check{m}(u,t) = x^u \mathrm{e}^{-t} + \big(\Lambda \check{m}\big)(u,t).
\end{equation*}
Denote $c_M := \sup_{u \in I} \int_I M(u,v)\mathrm{d}v \leq 1$. We have the bound for the operator norm:
\begin{align*}
    \|\Lambda\| & \leq c_M \cdot \sup_{t \in [0, T]} \bigg\{\int_0^t \bigg[\mathrm{e}^{-(T+t-2r)} + \int_r^T \mathrm{e}^{-(t+l-2r)}\mathrm{d}l\bigg]\mathrm{d}r\bigg\} \\
    & = c_M \cdot \big(1-\mathrm{e}^{-T}\big) < 1.
\end{align*}
Hence, $\Lambda$ is a contraction and \eqref{A:eqn:36} has a unique solution. With this solution $\{h^u\}_{u \in I}$, the dynamics \eqref{A:eqn:37} is then determined, and so is $\{\mathcal{L}(X^{u, \alpha^*})\}$. Finally, $\{\beta^u\}$ is determined by the third equation in \eqref{A:eqn:35} as it only depends on $\{h^u\}$, $\{m^u\}$ and $\{\mathcal{L}(X^{u, \alpha^*})\}$. We remark that $\boldsymbol{\mu}^*$ is indeed the unique GMFG equilibrium. This follows from the facts that $\alpha^* := \{\alpha^{u,*}\}_{u \in I}$ is the unique optimal control as the the cost function is strictly convex in $\alpha^u$, and the uniquely solvable system \eqref{A:eqn:36} does not change for different GMFG equilibria. 

Moreover, as the solution $\{m^u,h^u\}_{u}$ of the system \eqref{A:eqn:36} is continuous on $I \times [0,T]$ due to the definition of $D_\Lambda$ and \eqref{A:eqn:38}, the map $t \rightarrow h^u_t$ is Lipschitz continuous thanks to the boundedness of the derivative $(h^u_t)^\prime$. Thus, the optimal dynamics \eqref{A:eqn:37} satisfies Assumption \ref{A:assump:1} (i)-(ii) and Assumption \ref{A:assump:2} (i)-(iii) thanks to Example \ref{A:example:1}. Besides, the unique solvability of \eqref{A:eqn:36} implies that $h^u_t$ depends on $t$ and $\boldsymbol{m} := \{m^u_t\}_{u,t}$, i.e. $h^u_t = h^u(t, \boldsymbol{m})$.

Now let us introduce another controlled dynamics:
\begin{equation}
    \mathrm{d}Y^{u, \alpha}_t = \alpha^u_t \mathrm{d}t + \widehat{Z}^{u, \theta}(\boldsymbol{\nu}_t)\mathrm{d}B^u_t, \quad Y^{u,\alpha}_0 = y^u \in \mathbb{R},
    \label{A:eqn:39}
\end{equation}
and its associated cost function
\begin{equation}
    \mathcal{J}^{u, \boldsymbol{\nu}}(t,y^u, \alpha^u) := \frac{1}{2}\mathbb{E} \bigg[\int_t^T  \Big(\big(Y^{u,\alpha}_s - Z^u(\boldsymbol{\nu}_s)\big)^2 + \big(\alpha^u_s\big)^2\Big)\mathrm{d}s + \big(Y^{u,\alpha}_T - Z^u(\boldsymbol{\nu}_T)\big)^2 \bigg]
    \label{A:eqn:40}
\end{equation}
where $\widehat{Z}^{u, \theta}(\boldsymbol{\nu}): \mathcal{H}(\mathbb{R}) \rightarrow \mathbb{R}$ is defined by 
$$
\widehat{Z}^{u, \theta}(\boldsymbol{\nu}) := \int_I \int_\mathbb{R} K(u,v) \theta(z) \nu^v(\mathrm{d}z)\mathrm{d}v;
$$
the function $\theta: \mathbb{R} \rightarrow \mathbb{R}$ is Lipschitz, and the map $I \ni u \mapsto y^u \in \mathbb{R}$ is measurable satisfying $\sup_{u \in I} |y^u|^{2} < \infty$. Notice that no convexity is imposed on $\theta$. By a similar reasoning as before, there exists a unique GMFG equilibrium to \eqref{A:eqn:39}-\eqref{A:eqn:40}, and denote the optimal control and the corresponding flow of measure ensembles by $\widehat{\alpha}^{*} := \{\widehat{\alpha}^{u, *}\}_{u \in I}$ and $\boldsymbol{\nu}^* := \{\nu^{u,*}\}_{u \in I}$, respectively. 

To ensure the usage of the functional convex order results, we additionally suppose that: \\
$\bullet$ \quad $x^u = y^u$, for any $u \in I$; \\
$\bullet$ \quad for any fixed $u \in I$, we have for any $(v_1, v_2, z_1, z_2) \in I^2 \times \mathbb{R}^2$ that
$$
G(u,v_1)G(u,v_2) \sigma(z_1)\sigma(z_2) \leq K(u,v_1)K(u,v_2) \theta(z_1)\theta(z_2).
$$
Under the first point, we have $
\widehat{\alpha}^{u,*}_t = - Y^{u, \widehat{\alpha}^*}_t - h^u(t, \{\mathbb{E}[Y_t^{u, \widehat{\alpha}^*}]\}_{u,t}) = - Y^{u, \widehat{\alpha}^*}_t -h^u(t, \{\mathbb{E}[X_t^{u, \alpha^*}]\}_{u,t}),
$ where $\{h^u_t\}$ also solves \eqref{A:eqn:36} since the equation of $\{m^u_t\}$, i.e. the expectation of the optimal controlled dynamics, does not change if we switch the diffusion coefficient from $Z^{u, \sigma}$ to $\widehat{Z}^{u, \theta}$. Thus, Assumption \ref{A:assump:2} (iv)-(v) are satisfied by Example \ref{A:example:1}. Consequently, Theorem \ref{A:thm:1} (b) or \eqref{A:eqn:58} implies the comparison of the following two value functions $\mathcal{V}^{u, \boldsymbol{\mu}^*}(x^u),  \mathcal{V}^{u, \boldsymbol{\nu}^*}(y^u)$, which are defined as:
\begin{align*}
    & \mathcal{V}^{u, \boldsymbol{\mu}^*}(x^u) := \inf_{\alpha^u} \mathcal{J}^{u, \boldsymbol{\mu}^*}(0,x^u, \alpha^u) = \mathcal{J}^{u, \boldsymbol{\mu}^*}(0,x^u, \alpha^{u,*}), \\
    & \mathcal{V}^{u, \boldsymbol{\nu}^*}(y^u) := \inf_{\alpha^u} \mathcal{J}^{u, \boldsymbol{\nu}^*}(0,y^u, \alpha^u) = \mathcal{J}^{u, \boldsymbol{\nu}^*}(0,y^u, \widehat{\alpha}^{u,*}).
\end{align*}
More precisely, $\mathcal{V}^{u, \boldsymbol{\mu}^*}(x^u) \leq \mathcal{V}^{u, \boldsymbol{\nu}^*}(y^u)$, $\forall u \in I$.

\bibliographystyle{plain}
\bibliography{GraphonConvexorder}

\end{document}